\documentclass[a4paper,reqno]{amsart}
\usepackage[utf8]{inputenc}
\usepackage{amssymb}
\usepackage{enumitem}
\usepackage{mathrsfs}
\usepackage{mathtools}
\usepackage{amsmath}
\usepackage [dvipsnames] { xcolor }
\usepackage{multirow}
\usepackage[colorlinks=true]{hyperref}

\hypersetup{linkcolor=RoyalBlue,urlcolor=SeaGreen,citecolor=SeaGreen}

\setlist[enumerate]{label=\emph{(\roman*)}}

\usepackage{environ}

\newtheorem{theorem}{Theorem}[section]
\newtheorem{corollary}[theorem]{Corollary}
\newtheorem{lemma}[theorem]{Lemma}
\newtheorem{proposition}[theorem]{Proposition}

\theoremstyle{definition}
\newtheorem{definition}[theorem]{Definition}
\newtheorem{remark}[theorem]{Remark}
\numberwithin{equation}{section}
\newcommand{\RR}{\mathbb{R}}

\newcommand{\dd}{{\rm d}}

\begin{document}

\title[Three-solitons for gKdV equation]{Construction of three-solitons with logarithmic distance for the mass-critical gKdV equations}

\author{Yang Lan}
\address{Yau Mathematical Sciences Center, Tsinghua University, Beijing 100084, P. R. China.}
\email{lanyang@mail.tsinghua.edu.cn}

\author{Xu Yuan}
\address{State Key Laboratory of Mathematical Sciences, Academy of Mathematics and Systems Science, Chinese Academy of Sciences, Beijing 100190, P. R. China.}
\email{xu.yuan@amss.ac.cn}

\subjclass[2020]{35B40 (primary), 35Q51, 37Q53}

\thanks{Y. L. was partially funded by the New Cornerstone Investigator Program 100001127.}

\begin{abstract}
     For the mass-critical generalized Korteweg-de Vries equation,
\begin{equation*}
    \partial_{t}u+\partial_{x}\left(
    \partial_{x}^{2}u+u^{5}\right)=0,\quad (t,x)\in [0,\infty)\times \RR,
\end{equation*}
we prove the existence of three-soliton solutions with logarithmic relative distance and with the choice of signs $(+,-,-)$. The choice of the number three and the signs of solitons are related to the solvability of the ODE system generated by the nonlinear interactions between the three solitons and some non-localized profiles. In particular, these special behaviors are due to strong interactions between the three solitons. That is, the dynamics of each soliton is perturbed at leading order by the presence of other solitons.
\end{abstract}

\maketitle

\section{Introduction}
\subsection{Main result}\label{SS:Main}
In this article, we consider the dynamics of multi-solitons for the mass-critical generalized Korteweg-de Vries (gKdV) equation,
\begin{equation}\label{equ:gKdV}
    \partial_{t}u+\partial_{x}\left(
    \partial_{x}^{2}u+u^{5}\right)=0,\quad (t,x)\in [0,\infty)\times \RR.
\end{equation}
Recall that, the Cauchy problem for equation~\eqref{equ:gKdV} is locally well-posed in the energy space $H^{1}$ (see Kenig-Ponce-Vega~\cite{KPVgKdV1}). For any $u_{0}\in H^{1}$, there exists a unique (in a certain class) maximal solution $u$ of~\eqref{equ:gKdV} in $C\left([0,T);H^{1}\right)$ satisfying $u_{|t=0}=u_{0}$. Moreover, the following blow-up criterion holds:
	\begin{equation}\label{equ:blowCauchy}
		T<\infty\Longrightarrow 
		\lim_{t\uparrow T}\| \partial_{x} u(t)\|_{L^{2}}=\infty.
	\end{equation}

For any $H^{1}$ solution $u$, the mass $M$ and energy $E$ are conserved, where
\begin{equation*}
    M(u)=\int_{\RR}u^{2}\dd x\quad \mbox{and}\quad 
    E(u)=\frac{1}{2}\int_{\RR}(\partial_{x}u)^{2}\dd x-\frac{1}{6}\int_{\RR}u^{6}\dd x.
\end{equation*}

Recall also that, for any solution $u$ of~\eqref{equ:gKdV} and $\lambda>0$, the scaling symmetry
\begin{equation*}
    u_{\lambda}(t,x)=\lambda^{\frac{1}{2}}u(\lambda^{3}t,\lambda x),\quad \mbox{for}\ (t,x)\in [0,\infty)\times \RR,
\end{equation*}
again results in a solution to~\eqref{equ:gKdV}. This scaling symmetry keeps the $L^{2}$-norm invariant so that the problem is called \emph{mass-critical}.

\smallskip
Denote 
\begin{equation}\label{equ:defQ}
    Q(x)=\left(\frac{3}{\cosh^{2}2x}\right)^{\frac{1}{4}},\ \  -Q''+Q-Q^{5}=0\ \  \mbox{and}\quad E(Q)=0.
\end{equation}
The family of traveling wave solutions (also called \emph{soliton})
\begin{equation*}
    u(t,x)=\lambda_{0}^{-\frac{1}{2}}
    Q\left(\lambda_{0}^{-1}\left(x-\lambda_{0}^{-2}t-x_{0}\right)\right),\ \ \mbox{for}\ (\lambda_{0},x_{0})\in (0,\infty)\times \RR,
\end{equation*}
plays a distinguished role in the analysis of dynamics for~\eqref{equ:gKdV}. More precisely, the soliton $Q$ is related to the sharp Gagliardo-Nirenberg inequality (see~\cite{Wein}):
\begin{equation*}
\frac{1}{3}\int_{\RR}\phi^{6}\dd x
	\le \left(\frac{\int_{\RR}\phi^{2}\dd x}{\int_{\RR}Q^{2}\dd x}\right)^{2}
    \int_{\RR}(\partial_{x}\phi)^{2}\dd x,\quad \mbox{for any}\ \phi\in H^{1}.
\end{equation*}
Therefore, from a variational argument, the conservation of energy and the blow-up criterion~\eqref{equ:blowCauchy}, we know that any initial data $u_{0}\in H^{1}$ with subcritical mass, \emph{i.e.,} satisfying $\|u_{0}\|_{L^{2}}<\|Q\|_{L^{2}}$, generates a \emph{global and bounded} solution in $H^{1}$.

\smallskip
For the case of $\|u_{0}\|_{L^{2}}\ge \|Q\|_{L^{2}}$, it is natural to first restrict ourselves to considering the solutions with small supercritical mass, \emph{i.e.,} satisfying
	\begin{equation}\label{equ:massabove}
		\|Q\|_{L^{2}}\le \|u_{0}\|_{L^{2}}<(1+\delta)\|Q\|_{L^{2}},\quad
		\mbox{for}\ \ 0<\delta\ll 1.
	\end{equation}
The study of \emph{singularity formation} (in finite or infinite time) in such case has been an interesting problem and significant advances have been achieved in the last twenty years. In particular, a full description
and classification of the asymptotic behavior of the solution close to a single soliton has been obtained by Martel-Merle-Rapha\"el~\cite{MMRACTA}. See Subsection~\ref{SS:PreviousGKdV} for more discussion.

\smallskip
Another natural problem related to the \emph{soliton dynamics} for~\eqref{equ:gKdV} is to study multi-soliton solutions. In this direction, the first result was obtained by Martel~\cite{MartelAJM} which established the existence and uniqueness of \emph{pure multi-soliton} solutions of~\eqref{equ:gKdV} in the weak interaction regime. Then, C\^ote~\cite{CoteDUKE} constructed solutions that behave as a sum of a \emph{linear dispersive term} and \emph{multi-solitons} of~\eqref{equ:gKdV}. Indeed, the solutions  constructed in~\cite{CoteDUKE} still belong to the weak interaction regime. That is, the dynamics of each soliton is not perturbed at leading order by the presence of the linear term and other solitons. In this article, we revisit \emph{multi-soliton dynamics} and construct the three-soliton solutions in the strong interaction regime for~\eqref{equ:gKdV}.

\smallskip
Let $c_{Q}=12^{\frac{1}{4}}>0$. We set
\begin{equation*}
    \alpha=\frac{c_{Q}}{m_{0}^{2}}\int_{\RR}e^{y}Q^{5}(y)\dd y>0,\ \ \mbox{with}\ m_{0}=\frac{1}{4}\int_{\RR}Q(y)\dd y>0.
\end{equation*}
To simplify notation, we introduce
\begin{equation*}
    \beta_{1}=0,\ \  \beta_{2}=\log \left(\frac{\alpha }{15}\right)\ \ \mbox{and}\ \ 
    \beta_{3}=\beta_{2}+\log \left(\frac{\alpha }{3}\right).
\end{equation*}

\begin{theorem}\label{thm:main}
    Let $\vec
    \sigma=(\sigma_{1},\sigma_{2},\sigma_{3})=(1,-1,-1)$. Then there exists $T_0>0$ and a global-in-time solution $u\in C([T_{0},\infty);H^{1})$ of~\eqref{equ:gKdV} such that 
    \begin{equation*}
        \lim_{t\to \infty}\Big\|u(t)-\sum_{k=1}^{3}\sigma_{k}Q(\cdot-t-x_{k}(t))\Big\|_{H^{1}}=0.
    \end{equation*}
    Here, the position parameters $(x_{1},x_{2},x_{3})$ satisfy
    \begin{equation*}
     \sum_{k=1}^{3}  |x_{k}(t)+(18-3k) \log t-\beta_{k}|\lesssim t^{-\frac{1}{16}}, \ \ 
     \mbox{for all}\ t>T_{0}.
    \end{equation*}
\end{theorem}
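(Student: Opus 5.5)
We will build the solution by the standard strategy for strongly interacting multi-solitons: an approximate solution, modulation estimates, energy estimates backward in time, and a compactness argument. We work with $\lambda_k$ close to $1$ and positions $x_1>x_2>x_3$ with mutual distances of order $3\log t$, so the tail interactions are of size $e^{-(x_k-x_{k+1})}\sim t^{-3}$. Since $Q(y)\sim c_Q e^{-|y|}$ as $|y|\to\infty$, soliton $k$ feels from soliton $k\pm1$ a force proportional to $\sigma_k\sigma_{k\pm1}c_Q\int e^{y}Q^5$, that is, to $\alpha m_0^2$. The mass-critical structure makes this delicate. The generalized kernel element $\Lambda Q$ has $\int \Lambda Q=\tfrac12\int Q\neq0$ in a sense that produces the nonlocalized tail, and this is where $m_0=\frac14\int Q$ enters. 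So the leading-order profile must include, for each soliton, a correction $\sigma_k(\lambda_k^{-1/2}Q+b_kP)$ with $P$ the nonlocalized profile from Martel--Merle--Rapha\"el, $P\to \frac12\int Q$ as $y\to-\infty$. Each soliton's $P$-tail, extending to the left, then interacts with the solitons behind it. That interaction is the source of the ODE system and of the constraints on the number and the signs of the solitons.

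\textbf{Step 1: approximate solution and the formal system.} Write $u=\sum_k\sigma_k\big(Q_{\lambda_k}+b_kP_{\lambda_k}\big)(\cdot-x_k)+\varepsilon$. Compute the error $\Psi$ and project onto $\Lambda Q,Q',Q$ for each $k$. We expect a closed system of the form
\[
\dot x_k-\lambda_k^{-2}\approx 0,\qquad \frac{\dot\lambda_k}{\lambda_k}+b_k\approx 0,\qquad \dot b_k+2b_k^2\approx \frac{c}{m_0^2}\Big(\sigma_k\sigma_{k+1}e^{-(x_k-x_{k+1})}-\ldots\Big)+\text{tail terms }\sigma_k\sum_{j<k}\sigma_j b_j\,(\cdots).
\]
Here the tail terms come from the $P$-tails hitting the other solitons. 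In the centered variables $y_k=x_k-t$, the relation $\dot y_k\approx -2(\lambda_k-1)$ then yields an explicit ansatz $y_k=-(18-3k)\log t+\beta_k$, $\lambda_k-1\sim t^{-1}$, $b_k\sim t^{-2}$. We will verify algebraically that this power/log ansatz solves the leading-order system exactly when $\vec\sigma=(1,-1,-1)$. The three equations determine $\beta_2-\beta_1=\log(\alpha/15)$ and $\beta_3-\beta_2=\log(\alpha/3)$ from the coefficients $15=5\cdot3$ and $3$ arising from $\tfrac{d}{dt}$ of the powers $t^{-5}$ and $t^{-3}$. Translation invariance lets us fix $\beta_1=0$. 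Along the way we also linearize this ODE system around the special solution to identify its unstable directions; we expect finitely many growing modes, of Jordan/Euler type in $\log t$.

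\textbf{Step 2: bootstrap on $[T_0,T_n]$.} For a sequence $T_n\to\infty$, take final data at $T_n$ equal to the approximate profile with parameters on the special trajectory, perturbed by a finite-dimensional parameter $\zeta$ spanning the unstable modes. We impose the orthogonality conditions on $\varepsilon$, derive modulation equations with errors $O(\|\varepsilon\|_{loc}+t^{-2-})$, and control $\varepsilon$ backward in time. For that we use a localized energy--virial functional à la Martel--Merle--Rapha\"el, summed with cutoffs around each soliton, with coercivity from the orthogonality conditions. The target is $\|\varepsilon(t)\|_{H^1}\lesssim t^{-3/2-}$-type bounds, strong enough to give the $t^{-1/16}$ loss in the parameters. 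The stable parameter directions are closed by integrating from $T_n$. The unstable ones are handled by a Brouwer-type topological argument on $\zeta$, which yields a choice of $\zeta_n$ keeping all estimates valid down to $T_0$, uniformly in $n$.

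\textbf{Step 3: compactness.} Uniform $H^1$ bounds together with $L^2$ tail control (monotonicity of mass to the right for gKdV) allow weak limits of $u_n(T_0)$. By weak $H^1$ continuity of the flow, these limits produce the solution claimed in Theorem~\ref{thm:main}.

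The main obstacle is Step 1 combined with the energy estimate. The nonlocalized $P$-tails are not in $L^2$ uniformly, so they must be cut off at distance $\sim\log t$. They also produce source terms of the same order as the soliton--soliton interactions. Deriving the ODE system with errors small enough, and then checking the sign condition that makes it solvable, is the delicate point. We would first attempt the projection computations using $\int P\Lambda Q$ and $\int Q\,P\!\restriction_{-\infty}$ explicitly, expressed through $m_0$ and $\alpha$.
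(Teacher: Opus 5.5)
Your outline has the right architecture (approximate solution, backward bootstrap from $T_n$, Brouwer on three unstable directions, weak limit), but two steps fail as stated.

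\textbf{The scaling modulation law.} You allow modulation errors $O(\|\varepsilon\|_{loc}+t^{-2-})$ and aim at $\|\varepsilon\|\lesssim t^{-3/2-}$. In the law $\dot\lambda_k/\lambda_k+b_k\approx 0$ this error is larger than $b_k\sim t^{-2}$ itself. Integrating from $T_n$ then gives an error $t^{-1/2-}$ in $\lambda_k-1\sim t^{-1}$, hence $t^{1/2-}$ in the positions. In the linearized system for $(\widehat x_k,t\widehat\nu_k,t^2\widehat b_k)$ the forcing becomes $t^{-1/2-}$, which is not integrable.

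The energy method cannot deliver $t^{-2-}$ with this ansatz. The total plateau $2m_0|\sum_k\sigma_kb_k|=6m_0t^{-2}$ of the tails does not vanish. Its time derivative therefore leaves an intrinsic error of amplitude $\sim t^{-3}$ on a region of length $\sim t$. The best weighted estimate then gives the localized norm $\mathcal N_A(\varepsilon)\lesssim C_0t^{-7/4}$, which is what the paper obtains.

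The missing idea is the refined MMR-type scaling law of Lemma~\ref{le:refinednu}. Set $\rho_k(y)=\int_y^\infty\Lambda Q(\sigma-x_k)\,\dd\sigma$ and $\Omega_1=\rho_1$, $\Omega_2=2\rho_1-\rho_2$, $\Omega_3=-2\rho_1+2\rho_2-\rho_3$. These combinations undo the cross terms $(\Lambda_kQ_k,\rho_\ell\phi_2)\approx 4m_0^2$, $k<\ell$, produced by the nonlocal $\rho_\ell$. Then $\mathcal F_k=m_0^{-2}(\varepsilon,\Omega_k\phi_2)$ satisfies $|\dot\nu_k-2b_k+\dot{\mathcal F}_k|\lesssim C_0t^{-9/4}$ and $|\mathcal F_k|\lesssim C_0t^{-5/4}$, and the ODE argument is run on $\nu_k+\mathcal F_k$.

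Likewise, the $b_k$-laws need errors $o(t^{-3})$. This holds only because the term linear in $\varepsilon$ drops out when projecting onto $Q_k$ ($\mathcal LQ'=0$ plus orthogonality), leaving $\mathcal N_A^2+t^{-2}\mathcal N_A$. A generic $O(\|\varepsilon\|_{loc})$ bound is useless there.

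\textbf{The direction of the tail.} Backward in time, Kato's identity for \eqref{equ:gKdv2} gives almost-monotonicity only for a \emph{decreasing} weight, equal to $1$ on the left. With $P$, the tails and the $t^{-3}$ error above sit where that weight is $1$. They contribute $t^{-5/2}\mathcal N$ to $\frac{\dd}{\dd t}\mathcal E$ and cap $\mathcal N$ at $t^{-3/2}$. That is exactly the borderline: $\mathcal N^2\sim t^{-3}$ matches the interaction forces in the $b_k$-laws, and $|\mathcal F_k|\lesssim t^{1/2}\mathcal N\sim t^{-1}\sim|\nu_k|$.

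This is why the paper uses $X=P-2m_0(1+5Y)$, whose tail is on the right, where $\phi_1$ decays beyond $At^{1/2}$. This gains $t^{-1/4}$ (at the price of $A^{1/2}$) and closes at $t^{-7/4}$. The plateau $-2m_0b_k$ of $X_k$ lying on the solitons $\ell>k$ creates localized errors $-10m_0\sigma_kb_k\partial_y(Q_\ell^4)$ of size $t^{-2}$. These must be removed by the resonance correction $W$ built on $Y$ ($\mathcal LY=Q^4$, $(\mathcal L(1+5Y))'=0$). These corrections enter the orthogonality conditions that fix the ODE coefficients, so they cannot be relegated to ``tail terms'' in the $b_k$-equations.

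The tail must also be cut at distance $\sim t$, not $\sim\log t$; a $\log t$ cutoff leaves transport errors of size $b_k/\log t$.

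\textbf{Further missing ingredients.} The energy functional needs the correction $\mathcal P=\sum_k\nu_k\int\varepsilon^2\phi_{k+2}$. It is required because $\frac{\dd}{\dd t}\mathcal E$ contains $20\sum_k\dot x_k\int(\partial_yQ_k)Q_k^3\varepsilon^2\sim t^{-1}\mathcal N_A^2$, which gains nothing after time integration.

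You also do not derive the ODE coefficients. The laws $\dot b_1=-\alpha e^{-r_1}$, $\dot b_2=-\alpha e^{-r_1}+\alpha e^{-r_2}$, $\dot b_3=-3\alpha e^{-r_2}$ come from requiring the $e^{-r_k}$-errors to be orthogonal to every $Q_\ell$, and this is where $K=3$ and $(1,-1,-1)$ come from. Your account of $15$ and $3$ via powers $t^{-5}$, $t^{-3}$ does not match. They come from $\dot b_1=-15t^{-3}=-\alpha e^{-r_1}$ and $\dot b_3=-9t^{-3}=-3\alpha e^{-r_2}$.
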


\begin{remark}\label{re:othercases}
   The choice of three solitons, the choice of signs for the solitons and the asymptotic behavior of the position parameters are related to the solvability (and an explicit solution) of the ODE system generated by the nonlinear interactions between solitons and some non-localized profiles. We mention here that, for any other case except the symmetry one $\vec{\sigma}=(-1,1,1)$, it seems that the ODE system does not have a straightforward explicit solution  (see Subsection~\ref{SS:Sketch} for more discussion), and thus, we do not expect that Theorem~\ref{thm:main} can be extended to any other case of the mass-critical gKdV equation~\eqref{equ:gKdV} based on the current approach.
    \end{remark}
    
    \begin{remark}\label{re:BO}
       We expect that Theorem~\ref{thm:main} can be extended to some other dispersive models (for example, the modified Benjamin-Ono equation, which can be seen as a nonlocal version of the gKdV-type equation). However, for such an extension, some key additional ingredients and analysis related to the difference of the linear structure for the models will be needed.
\end{remark}

\subsection{Singularity formation for the gKdV equation}\label{SS:PreviousGKdV}
We briefly survey the literature related to the \emph{blow-up solutions} for the mass-critical gKdV equation~\eqref{equ:gKdV}. First, using the rigidity property of gKdV flow near the soliton established in Martel-Merle~\cite{MMJMPA}, the first proof of the existence of blow-up solutions (in finite or infinite time) was provided in Merle~\cite{MJAMS} for any $H^{1}$ initial data with negative energy and satisfying the small supercritical mass condition~\eqref{equ:massabove}. Then, a finite time blow-up solution was constructed in Martel-Merle~\cite{MMJAMS} for $H^{1}$ initial data with negative energy and spatial decay. Later on, in the work of Martel-Merle-Rapha\"el~\cite{MMRACTA}, the authors gave a complete description and classification of the solution flow near the soliton which completed the previous results in~\cite{MMJMPA,MMJAMS,MJAMS}. Last, we refer to~\cite{MMANN,MMRJEMS,MDASNSP,MDPRINT} for some related results on the blow-up dynamics near the single soliton for the mass-critical gKdV equation~\eqref{equ:gKdV}.

\smallskip
Concerning the \emph{multi-bubble dynamics}, it was proved in Combet-Martel~\cite{ComKdVSIAM} that there exists a finite time blow-up solution of~\eqref{equ:gKdV} containing an arbitrary number of blow-up bubbles for any choice of signs and coefficients of the scaling parameters. More precisely, the leading term of the approximate solution for the multi-bubble problem in~\cite{ComKdVSIAM} is a sum of some rescaled and modulated versions of the \emph{minimal mass blow-up solution} $S_{\rm{KdV}}$. Indeed, the existence and uniqueness of the minimal mass element $S_{\rm{KdV}}$ have been obtained by Martel-Merle-Rapha\"el~\cite{MMRJEMS} and the sharp asymptotics of such element have been obtained by Combet-Martel~\cite{Comkdv}. Therefore, the leading order of the nonlinear interaction for the approximate solution in~\cite{ComKdVSIAM} can be described which allowed the authors to refine the ansatz sufficiently sharp to close the energy estimate for this problem. To the best of our knowledge, the solution constructed in~\cite{ComKdVSIAM} is the first example of the solution containing \emph{multi-bubbles} in the strong interaction regime of~\eqref{equ:gKdV}. Very recently, an infinite time blow-up solution of~\eqref{equ:gKdV} containing two blow-up bubbles with opposite signs in the strong interaction regime has been constructed in~\cite{LYPRINT}.

\smallskip
\subsection{Multi-solitons for the gKdV equation}\label{SS:Multisoliton}
We briefly highlight a few results related to the \emph{multi-soliton dynamics} for the integrable KdV and mKdV equations.
Multi-soliton solutions have been obtained by the inverse scattering theory for the KdV equation in the work of Miura~\cite{Miura}. Then, Eckhaus-Schuur~\cite{Eckhaus} and Schuur~\cite{Schuur} showed that such behavior is fundamental for general solutions of the KdV equation. For the mKdV equation, besides the multi-soliton solutions, it admits a richer family of special solutions including breather solutions and dipole solutions (\emph{i.e.}, two-solitons with logarithmic distance). In particular, the existence of dipole solutions with \emph{opposite signs} was proved by Wadati-Ohkuma~\cite{Wadati} for the mKdV equation. See~\cite{Schuur} for more discussion on the long-time behavior of KdV and mKdV equations.

\smallskip
For the non-integrable case, the existence and uniqueness of multi-soliton solutions were proved in~\cite{MartelAJM} for subcritical and critical cases. A solution which behaves like a sum of linear dispersive term and multi-solitons was also constructed in~\cite{CoteJFA,CoteDUKE} for such cases.
For the supercritical case, the multi-solitons was constructed in C\^ote-Martel-Merle~\cite{CoteMartelMerle} for both gKdV and NLS equations. On the other hand, the existence of strongly interacting two-solitons (with logarithmic distance) was obtained in Nguyen~\cite{NguyenKdV} for the subcritical and supercritical cases via the general strategy introduced in Martel-Rapha\"el~\cite{MartelRaphael}. Later on, the description of asymptotic behaviors for the strongly interacting two-solitons was obtained in Jendrej~\cite{JacekKdV} for a class of gKdV equations. We mention here that, the results in the works of~\cite{JacekKdV,NguyenKdV} do not include the mass-critical gKdV equation due to the cancellation condition $(\Lambda Q,Q)=0$ in this case. More precisely, such condition leads to a more delicate ODE system which involves the interaction between solitons and some non-localized profiles (see Subsection~\ref{SS:Sketch} for more discussion). To the best of our knowledge, the solution in Theorem~\ref{thm:main} is the first example of a solution containing \emph{multi-solitons} in the strong interaction regime of the mass-critical gKdV equation~\eqref{equ:gKdV}.

\subsection{Related results for other models}\label{SS:Othermodels}
Historically, the existence of multi-soliton solutions for non-integrable cases was first studied by Merle~\cite{MerleCMP} for the mass-critical NLS equation and Martel~\cite{MartelAJM} for the subcritical and critical gKdV equation. Then, the strategy introduced in~\cite{MartelAJM,MerleCMP} was extended to other dispersive
or wave-type models (see for example~\cite{BGLC,CoteMartel,CoteMunoz,MMARMA,MRNSIAM,XYJFA} and references therein). We mention here that, all the above-mentioned works are related to the multi-solitons in the weak interaction regime. On the other hand, we mention quite a few other previous results of strongly interacting multi-solitons: for the Hartree equations by Krieger-Martel-Rapha\"el~\cite{KMRCPAM} and by G\'omez-Schmid-Wu~\cite{GSWARMA}, for the cubic Schr\"odinger system by \cite{MNDCDS}, for the NLS by Nguyen~\cite{NguyenNLS}, for the half-wave equation by G\'erard-Lenzmann-Pocovnicu-Rapha\"el~\cite{GLPRANNPDE}, for the NLKG equation by Aryan~\cite{Aryan}, for the damped NLKG equation by C\^ote-Martel-Yuan~\cite{CoteMartelYuan} and by Ishizuka-Nakanishi~\cite{Nakanishi},
for the fractional mKdV equation by Eychenne-Valet~\cite{EVJFA} and for the gBO equations by Lan-Wang~\cite{LanWangCVPDE}. Last, we refer to Jendrej-Kowalczyk-Lawrie~\cite{JacekKink1} and Jendrej-Lawrie~\cite{JacekKink2} for the study of strongly interacting multi-kinks (with logarithmic distance) for scalar fields in dimension 1+1, which is a phenomenon closely related to multi‑soliton dynamics.

\subsection{Sketch of the proof}\label{SS:Sketch}
Following the spirit of the remarkable work of Martel-Rapha\"el~\cite{MartelRaphael}, we construct the three-soliton solutions with logarithmic distance. We now give an outline of the proof and explain why we chose the three solitons and the specific signs in Theorem~\ref{thm:main}.
Consider the following change of variable
\begin{equation*}
    w(t,y)=u(t,y+t),\quad \mbox{for}\ (t,y)\in [0,\infty)\times \RR.
\end{equation*}
Here, $u(t,x)$ is a solution to~\eqref{equ:gKdV}. By an elementary computation, we find  
\begin{equation}\label{equ:gKdv2}
\partial_{t}w+\partial_{y}\left(\partial_{y}^{2}w-w+w^{5}\right)=0,\quad 
\mbox{for}\ (t,y)\in [0,\infty)\times \RR.
\end{equation}
From now on, we focus on the equation~\eqref{equ:gKdv2} and discuss suitable approximate multi-soliton solutions in the strong interaction regime for this equation.

\smallskip
Let $K\in \mathbb{N}^{+}$ with $K\ge 2$ and $\vec{\sigma}=(\sigma_{1},\sigma_{2},\dots,\sigma_{K})\in \left\{1\right\}\times\left\{-1,1\right\}^{K-1}$ be the signs of solitons. Let $\Gamma=(\Gamma_{1},\dots,\Gamma_{K})$ with $\Gamma_{k}=(x_{k},\nu_{k},b_{k})\in \RR^{3}$. Here, $x_{k}$ is the position parameter for the $k$-th soliton, $\nu_{k}$ is the scaling parameter for the $k$-th soliton and $b_{k}$ is a parameter related to a non-localized profile $X$ in the region of the $k$-th soliton. For any $k\in[\![1,K]\!]$, we denote by $Q_{k}$ the modulated soliton with position and scaling parameters $(x_{k},\nu_{k})$. Inspired by the previous work Martel-Pilod~\cite{MDASNSP}, we introduce the non-localized profile $X$ which only has a tail on the right-hand side of $\RR$ (see Corollary~\ref{coro:X}) to describe the projection of the nonlinear interaction between solitons onto the directions $Q_{k}$ for any $k\in[\![1,K]\!]$. Indeed, we will consider the backward-in-time evolution of~\eqref{equ:gKdv2} later and will choose a decreasing weight function to establish an energy functional with monotonicity property. Note that, a decreasing weight function will match the control for the effect of the tail on the right-hand side of $\RR$. This is the main reason why we choose the non-localized $X$ instead of the usual one $P$, which has a tail only on the left-hand side of $\RR$.

\smallskip
Because of the presence of the right-hand side tail of $X$, for any $(k,\ell)\in [\![1,K]\!]\times[\![1,K]\!]$ with $k<\ell$, the interaction between the modulated profile $X_{k}$ and modulated soliton $Q_{\ell}$ will appear in the ODE system for the parameters as the leading-order term. To refine such interactions, we need to introduce some additional terms in the approximate solution related to the localized profile $Y$ (see Lemma~\ref{le:Y}). In conclusion, we consider the following approximate multi-soliton solutions of~\eqref{equ:gKdv2}:
\begin{equation}\label{equ:defSintro}
\begin{aligned}
    S\approx\sum_{k=1}^{K}\sigma_{k}Q_{k}+\sum_{k=1}^{K}\sigma_{k}b_{k}X_{k}\phi-
    10m_{0}\sum_{k=1}^{K-1}\sum_{\ell=k+1}^{K}\sigma_{k}b_{k}Y_{\ell}.
    \end{aligned}
\end{equation}
From now on, we denote
\begin{equation*}
    r_{0}=r_{K}=0\ \  \mbox{and}\ \ 
    r_{k}=x_{k+1}-x_{k}\ \ \mbox{for}\ k\in  [\![1,K-1]\!].
\end{equation*}
Assume that the approximate solution $S$ is in the strong interaction regime. That is, for any $k\in [\![1,K]\!]$, the geometric parameters $\Gamma_{k}$ satisfy
\begin{equation}\label{equ:ODEsystem}
\left\{
\begin{aligned}
    \frac{\dd x_{k}}{\dd t}&=\nu_{k},\\
    \frac{\dd \nu_{k}}{\dd t}&=2b_{k},
    \end{aligned}
    \right.
    \ \  \mbox{and}\ \
    \frac{\dd b_{k}}{\dd t}=\alpha_{k,k-1}e^{-r_{k-1}}+\alpha_{k,k}e^{-r_{k}}.
\end{equation}
Here, the coefficients $(\alpha_{k,k-1},\alpha_{k,k})$ for $k\in [\![1,K]\!]$ are to be fixed later. To simplify notation, we denote by $\vec{\alpha}$ the collection of all coefficients $(\alpha_{k,k-1},\alpha_{k,k})$.

\smallskip
Actually, the choice of the non-localized profiles has some flexibility which means that we could allow the non-localized profiles to have tails on both sides of $\RR$. However, due to the special structure of the error terms (see for example Proposition~\ref{prop:appPhi} for the three-solitons case), such left-hand side tails will lead to some extra restrictions on the coefficients $\vec{\alpha}$ of the ODE system for parameters $\Gamma$. Based on some elementary computations, we find that these restrictions lead to the same coefficients as those obtained by choosing $X$ directly. In other words, this flexibility does not change the structure of the ODE system for parameters $\Gamma$. 

\smallskip
The first, and most important ingredient of the proof is the study of the dynamics of the above-mentioned ODE system~\eqref{equ:ODEsystem}. More precisely, we need to choose the suitable signs $\vec{\sigma}$ and coefficients $\vec{\alpha}$ such that the ODE system~\eqref{equ:ODEsystem} has an explicit solution related to the multi-soliton dynamics. Inspired by the discussion in~\cite[Lemma 4]{MartelRaphael}, for any  $k\in [\![1,K]\!]$, we expect that the solution is given by 
\begin{equation}\label{equ:solutionapp}
    \left(x_{k},\nu_{k},b_{k}\right)= \left((a+3k)\log t+\beta_{k},\frac{a+3k}{t},-\frac{a+3k}{2t^{2}}\right).
\end{equation}
Here, $(a,\beta_{1},\dots,\beta_{K})\in \RR^{K+1}$ are some constants to be determined later. The key point of the ingredient is that the error terms generated by the approximate solution should not affect the leading order of the ODE system~\eqref{equ:ODEsystem}. It follows directly that the coefficients $\vec{\alpha}$ are uniquely determined by the choice of signs $\vec{\sigma}$ and some constants related to integrations of $Q$ and $X$. Surprisingly, using a standard ODE argument, we find that the only choice of the number of solitons $K$ and signs $\vec{\sigma}$ for which the ODE system admits a straightforward explicit solution~\eqref{equ:solutionapp} is $K=3$ and $\vec{\sigma}=(1,-1,-1)$. In particular, for this choice, the solution matches the asymptotic behavior of the three-soliton solutions in Theorem~\ref{thm:main}. We refer to Appendix~\ref{Appen:ODE} for more details of the discussion on the ODE system~\eqref{equ:ODEsystem}.

\smallskip
The second ingredient of the proof is the monotonicity of the energy functional for KdV-type equation which is inspired by Kato's early work~\cite{Kato}. More precisely, for any solution $u$ of~\eqref{equ:gKdV} and smooth decreasing function $\phi$, we have 
\begin{equation*}
    \frac{\dd}{\dd t}\int_{\RR}u^{2}\phi \dd x+3\int_{\RR}(\partial_{x}u)^{2}\phi'\dd x={\rm{L.O.T}},
\end{equation*}
which means that the above $L^{2}$ quantity enjoys a monotonicity property along the solution flow. Recall that, the approximate solution $S$ in~\eqref{equ:defSintro} involves some non-localized profiles that have tails on the right-hand side of $\RR$. To control the effect of such tails, we introduce a localized $H^{1}$ norm for the remainder term and establish the monotonicity for the energy functional to control this norm (see~\eqref{est:Boot} and Subsection~\ref{SS:Energy} for more details). On the other hand, from~\eqref{equ:ODEsystem}, some delicate scaling terms related to $\nu_{k}$ do not have favorable signs. To overcome this difficulty, we follow a similar approach to the one applied in the subcritical and supercritical cases in~\cite[Subsection 3.1.3]{NguyenKdV} to construct a suitable refined term of the energy functional for the remainder term. Based on such energy estimate and ODE argument, we obtain a sequence of (backward-in-time) three-soliton solutions on finite time intervals. Last, we pass to the limit for the sequence of three-soliton solutions and obtain the solution described in Theorem~\ref{thm:main} via the weak $H^{1}$ continuity of~\eqref{equ:gKdV}.

\subsection{Notation and conventions}\label{SS:Nota}
Denote by $\mathcal{Y}$ the set of functions $f\in C^{\infty}(\RR;\RR)$ such that, for all $n\in \mathbb{N}$, there exist $(c_{n},C_{n})\in (0,\infty)^{2}$ such that 
\begin{equation*}
    |f^{(n)}(y)|\lesssim C_{n}(1+|y|)^{c_{n}}e^{-|y|},\quad \mbox{on}\ \RR.
\end{equation*}
Let $\vec{\sigma}=(\sigma_{1},\sigma_{2},\sigma_{3})=(1,-1,-1)$ be the choice of the signs for the solitons.

\smallskip
For any $\left(N_{1},N_{2}\right)\in \mathbb{N}\times \mathbb{N}$ with $N_{1}\le N_{2}$, we set $ [\![N_{1},N_{2}]\!]=\left\{N_{1},\dots,N_{2}\right\}$.

\smallskip
We define the $L^{2}$-scaling operator and the linearized operator around $Q$:
\begin{equation*}
    \Lambda f=\frac{1}{2}f+yf'\quad \mbox{and}\quad 
    \mathcal{L}f=-f''+f-5Q^{4}f.
\end{equation*}
For any functions $f\in L^{2}(\RR)$ and $g\in L^{2}(\RR)$, we denote the $L^{2}$-scalar product by 
\begin{equation*}
    (f,g)=\int_{\RR}f(y)g(y)\dd y.
\end{equation*}
Using the explicit expression of $Q$ in~\eqref{equ:defQ}, we find that as $y\to -\infty$,
\begin{equation}\label{est:asymQ-}
    Q(y)=c_{Q}e^{y}+O(e^{2y})\quad \mbox{and}\quad 
    Q'(y)=c_{Q}e^{y}+O(e^{2y}).
\end{equation}
We also find that as $y\to \infty$,
\begin{equation}\label{est:asymQ+}
    Q(y)=c_{Q}e^{-y}+O(e^{-2y})\quad \mbox{and}\quad 
    Q'(y)=-c_{Q}e^{-y}+O(e^{-2y}).
\end{equation}
Here, we denote $c_{Q}=12^{\frac{1}{4}}>0$. In addition, we define 
\begin{equation}\label{equ:defm0}
    \alpha=\frac{c_{Q}}{m_{0}^{2}}\int_{\RR}e^{y}Q^{5}(y)\dd y,\ \ \mbox{with}\ m_{0}=\frac{1}{4}\int_{\RR}Q(y)\dd y.
\end{equation}
Recall that, we introduce
\begin{equation}\label{equ:defbeta}
    \beta_{1}=0,\ \  \beta_{2}=\log \left(\frac{\alpha }{15}\right)\ \ \mbox{and}\ \ 
    \beta_{3}=\beta_{2}+\log \left(\frac{\alpha }{3}\right).
\end{equation}
For any $n\in \mathbb{N}$ and smooth real-valued function $f:\RR\to \RR$, we denote 
\begin{equation*}
    \partial_{y}^{\le n}f=\left(f,\partial_{y}f,\dots,\partial_{y}^{n}f\right),\quad \mbox{on}\ \RR.
\end{equation*}
Moreover, for any parameter $\nu\in (0,1)$ and such function $f:\RR\to \RR$, we set 
\begin{equation*}
    f_{1+\nu}(y)=(1+\nu)^{\frac{1}{4}}f((1+\nu)^{\frac{1}{2}}y),\quad \mbox{on}\ \RR.
\end{equation*}
From the Fundamental Theorem of Calculus, for such $\nu\in (0,1)$ and $f:\RR\to \RR$,
\begin{equation}\label{equ:Taylornu}
    f_{1+\nu}=f+\frac{\nu}{2}\Lambda f+\frac{\nu^{2}}{2}
    \int_{0}^{1}\left[
    \frac{(\Lambda^{2}f)_{1+\sigma\nu}}{2(1+\sigma\nu)^{2}}
    -\frac{(\Lambda f)_{1+\sigma \nu}}{(1+\sigma \nu)^{2}}
    \right](1-\sigma)\dd \sigma.
\end{equation}

\section{Non-linear profiles and interactions between the solitons}\label{S:intera}

\subsection{Structure of the linearized operator}\label{SS:linear}
We first recall the following standard properties without proof (see \emph{e.g.}~\cite[Lemma 2]{MMANN},~\cite[Lemma 2.1]{MMRACTA} and~\cite{Wein}).

\begin{proposition}
[Spectral theory of $\mathcal{L}$]\label{prop:L}
    The self-adjoint operator $\mathcal{L}$ on $L^{2}$ satisfies the following properties.
    \begin{enumerate}
        \item \emph{Eigenfunctions}. It holds 
        \begin{equation*}
            \mathcal{L}Q^{3}=-8Q^{3}\ \ 
            \mbox{and}\ \ {{\rm{Ker}}}\mathcal{L}={{\rm{Span}}}\left\{Q'\right\}.
        \end{equation*}

        \item \emph{Scaling identities}. It holds 
        \begin{equation*}
        \mathcal{L}\Lambda Q=-2Q\ \  \mbox{and}\ \  (\Lambda Q,Q)=0.
        \end{equation*}

        \item \emph{Inversion of $\mathcal{L}$}. For any real-valued function $h\in L^{2}$ with $(h,Q')=0$, there exists a unique real-valued function $f\in H^{2}$ with $(f,Q')=0$ such that $\mathcal{L}f=h$. Moreover, if $h\in \mathcal{Y}$, then we have $f\in \mathcal{Y}$.

        \item \emph{Coercivity of $\mathcal{L}$.} There exists $\mu>0$ such that 
        \begin{equation*}
            \left(\mathcal{L}f,f\right)\ge \mu \|f\|_{H^{1}}^{2}-
            \frac{1}{\mu}\left((f,Q)^{2}+(f,Q')^{2}+(f,\Lambda Q)^{2}\right).
        \end{equation*}
    \end{enumerate}
\end{proposition}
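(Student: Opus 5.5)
The statement is classical, and the paper recalls it without proof, so my plan follows the standard references~\cite{Wein,MMANN,MMRACTA}. I would check each item in turn, using the explicit formula $Q(y)=(3\,\mathrm{sech}^2 2y)^{1/4}$ and the equation $-Q''+Q-Q^5=0$.

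For (i), write $Q^3=3^{3/4}\,\mathrm{sech}^{3/2}(2y)$. A direct computation of $(Q^3)''$, using $(Q')^2=Q^2-\tfrac13 Q^6$ (the first integral of the soliton equation), gives
\begin{equation*}
(Q^3)''=3Q^2Q''+6Q(Q')^2=9Q^3-5Q^7,
\end{equation*}
and hence $\mathcal{L}Q^3=-9Q^3+5Q^7+Q^3-5Q^7=-8Q^3$. Differentiating the soliton equation gives $\mathcal{L}Q'=0$. Since $\mathcal{L}$ is a one-dimensional Schrödinger operator, the solutions of $\mathcal{L}f=0$ form a two-dimensional space. A second solution, obtained by the Wronskian, grows like $e^{|y|}$, so the $L^2$ kernel is $\mathrm{Span}\{Q'\}$. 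Sturm–Liouville theory then shows that $Q^3>0$ is the ground state, that $Q'$ (with one zero) is the second eigenfunction, and that the essential spectrum is $[1,\infty)$.

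For (ii), apply $\Lambda$ to the equation satisfied by the rescaled soliton. The function $Q_\lambda=\lambda^{1/2}Q(\lambda y)$ solves $-Q_\lambda''+\lambda^2 Q_\lambda-Q_\lambda^5=0$. Differentiating in $\lambda$ at $\lambda=1$ gives $\mathcal{L}\Lambda Q+2Q=0$. For the second identity, $(\Lambda Q,Q)=\tfrac12\frac{\dd}{\dd\lambda}\|Q_\lambda\|_{L^2}^2\big|_{\lambda=1}=0$ by mass invariance, or equivalently by integrating by parts, since $\int yQQ'=-\tfrac12\int Q^2$.

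For (iii), $\mathcal{L}$ is self-adjoint and Fredholm on $L^2$ with kernel $\mathrm{Span}\{Q'\}$. Its range is therefore $\{Q'\}^\perp$, and there is a unique preimage $f\in H^2\cap\{Q'\}^\perp$. For the decay claim when $h\in\mathcal{Y}$, I would write $f$ by variation of constants with the fundamental pair $Q'$ and the growing solution. This yields $|f(y)|\lesssim(1+|y|)^{c}e^{-|y|}$, where the polynomial loss comes from the resonance between $e^{-|y|}$ and the homogeneous decay rate. Derivatives are then controlled by bootstrapping through $f''=f-5Q^4f-h$.

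For (iv), which is the main step, the argument combines the spectral information with the nondegeneracy identities above. From (i), $\mathcal{L}$ has exactly one negative eigenvalue, and its kernel is $\mathrm{Span}\{Q'\}$.

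I would first show $(\mathcal{L}f,f)\ge0$ on $\{Q,Q'\}^\perp$. The standard criterion says this holds provided $(\mathcal{L}^{-1}Q,Q)\le0$. Since $\mathcal{L}^{-1}Q=-\tfrac12\Lambda Q$, this quantity equals $-\tfrac12(\Lambda Q,Q)=0$. That is the degenerate mass-critical case: the form is nonnegative on $\{Q,Q'\}^\perp$, but $\Lambda Q$ lies in its null direction. This is exactly why the extra orthogonality $(f,\Lambda Q)$ is needed.

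To get strict positivity on $\{Q,Q',\Lambda Q\}^\perp$, argue by contradiction through a minimization. If the infimum of $(\mathcal{L}f,f)$ over the unit sphere of this subspace were $0$, a minimizer would exist, since the essential spectrum is bounded away from $0$. Its Euler–Lagrange equation is $\mathcal{L}f=aQ+bQ'+c\Lambda Q$, and testing against $Q'$, $\Lambda Q$ and $f$ forces $f=0$, a contradiction. Upgrading from $L^2$ to $H^1$ is routine, using $(\mathcal{L}f,f)\ge\|f\|_{H^1}^2-C\|f\|_{L^2}^2$.

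For general $f$, decompose $f=g+aQ+bQ'+c\Lambda Q$ with $g$ orthogonal to all three directions. The coefficients are controlled by the three scalar products, because the Gram matrix is invertible. The cross terms are absorbed using Cauchy–Schwarz, which gives the stated inequality with some small $\mu>0$. The hardest point is this degenerate positivity step, and I would rely on the computation $\mathcal{L}^{-1}Q=-\tfrac12\Lambda Q$ as in~\cite{MMANN}.
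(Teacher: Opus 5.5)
Your proposal is correct and is essentially the standard argument from the references the paper cites in place of a proof: explicit ODE computations and Sturm--Liouville/Fredholm theory for (i)--(iii), then Weinstein's criterion with $(\mathcal{L}^{-1}Q,Q)=-\frac{1}{2}(\Lambda Q,Q)=0$ and a compactness argument on $\{Q,Q',\Lambda Q\}^{\perp}$ for (iv). One small imprecision: in the Euler--Lagrange step, testing against $Q'$ and $\Lambda Q$ gives $b=c=0$, but testing against $f$ yields only $0=0$, so to kill $a$ you must invert ($\mathcal{L}f=aQ$ with $f\perp Q'$ gives $f=-\frac{a}{2}\Lambda Q$) and then use $(f,\Lambda Q)=0$.
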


Second, we introduce the following localized profile $Y$ related to the existence of the resonance of the operator $\partial_{y}\mathcal{L}$. Such resonance has previously been used by Martel-Pilod~\cite{MDASNSP,MDPRINT} to study the finite point blow-up solution of gKdV equation~\eqref{equ:gKdV}.
\begin{lemma}\label{le:Y}
    There exists smooth function $Y\in \mathcal{Y}$ such that 
    \begin{equation*}
        \mathcal{L}Y=Q^{4}\ \ \mbox{and}\ \
        \left(\mathcal{L}(1+5Y)\right)'=0.
    \end{equation*}
    In particular, the following two identities hold:
    \begin{equation*}
       (Y,Q')=0\ \ \mbox{and}\ \  (Y,Q)=-\frac{3}{5}m_{0}.
    \end{equation*}
\end{lemma}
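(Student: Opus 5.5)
The plan is to construct $Y$ directly from the inversion property of $\mathcal{L}$ (Proposition~\ref{prop:L}~(iii)), and then to derive both identities by short algebraic computations using the equation for $Q$ and the scaling identity $\mathcal{L}\Lambda Q=-2Q$.

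\emph{Existence of $Y$.} Set $h=Q^{4}$. From the explicit formula~\eqref{equ:defQ}, $Q$ is smooth and even, and all its derivatives decay like $e^{-|y|}$. Hence $h$ is smooth and even, and $h\in\mathcal{Y}$ (it even decays like $e^{-4|y|}$). Since $Q'$ is odd, $(h,Q')=0$. Proposition~\ref{prop:L}~(iii) then gives a unique $Y\in H^{2}$ with $(Y,Q')=0$ and $\mathcal{L}Y=Q^{4}$, and moreover $Y\in\mathcal{Y}$.

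Smoothness follows from elliptic bootstrapping in the ODE
\begin{equation*}
Y''=Y-5Q^{4}Y-Q^{4},
\end{equation*}
whose right-hand side is as regular as $Y$ allows. (If one wants $Y$ even, this follows from uniqueness: $Y(-\cdot)$ solves the same problem, because $Q^{4}$ is even and $(Y(-\cdot),Q')=-(Y,Q')=0$.) The identity $(Y,Q')=0$ is part of the construction.

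\emph{The resonance identity.} Since $\mathcal{L}1=1-5Q^{4}$, we get
\begin{equation*}
\mathcal{L}(1+5Y)=1-5Q^{4}+5Q^{4}=1.
\end{equation*}
Its derivative vanishes, which is $\left(\mathcal{L}(1+5Y)\right)'=0$.

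\emph{Computation of $(Y,Q)$.} Use $\mathcal{L}\Lambda Q=-2Q$ from Proposition~\ref{prop:L}~(ii) together with self-adjointness of $\mathcal{L}$:
\begin{equation*}
(Y,Q)=-\tfrac12(Y,\mathcal{L}\Lambda Q)=-\tfrac12(\mathcal{L}Y,\Lambda Q)=-\tfrac12(Q^{4},\Lambda Q).
\end{equation*}
Next, since $Q^{4}Q'=(Q^{5}/5)'$, integration by parts (no boundary terms, by exponential decay) gives
\begin{equation*}
(Q^{4},\Lambda Q)=\tfrac12\int_{\RR}Q^{5}+\int_{\RR}y\,\big(\tfrac{Q^{5}}{5}\big)'=\tfrac12\int_{\RR}Q^{5}-\tfrac15\int_{\RR}Q^{5}=\tfrac{3}{10}\int_{\RR}Q^{5}.
\end{equation*}
Finally, integrate the equation $-Q''+Q-Q^{5}=0$ over $\RR$. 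Since $Q'$ vanishes at $\pm\infty$, this gives $\int Q^{5}=\int Q=4m_{0}$ by~\eqref{equ:defm0}. Combining,
\begin{equation*}
(Y,Q)=-\tfrac{3}{20}\cdot 4m_{0}=-\tfrac35 m_{0}.
\end{equation*}

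I do not expect a real obstacle here. The only point needing care is checking that the hypotheses of Proposition~\ref{prop:L}~(iii) hold, namely $Q^{4}\in\mathcal{Y}$ and orthogonality to $Q'$, which both follow from evenness and decay. The rest is the two integration-by-parts identities above.
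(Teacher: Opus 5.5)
Your proposal is correct and follows the paper's proof: you obtain $Y$ from Proposition~\ref{prop:L}~(iii), then compute $(Y,Q)=-\tfrac12(Y,\mathcal{L}\Lambda Q)=-\tfrac12(Q^{4},\Lambda Q)$ by integration by parts, using $\int_{\RR} Q^{5}=\int_{\RR} Q=4m_{0}$. You also spell out two points the paper leaves implicit: that $Q^{4}$ satisfies the hypotheses of (iii), and that $\mathcal{L}(1+5Y)=1$.
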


\begin{proof}
The existence of the function $Y$ follows directly from (iii) of Proposition~\ref{prop:L}. In addition, from (ii) of Proposition~\ref{prop:L} and integration by parts, we obtain
\begin{equation*}
    (Y,Q)=-\frac{1}{2}\left(Y,\mathcal{L}\Lambda Q\right)=-\frac{1}{4}\left(Q^{4},Q\right)-\frac{1}{2}\left(Q^{4},yQ'\right)=-\frac{3}{5}m_{0},
\end{equation*}
which directly completes the proof of Lemma~\ref{le:Y}.
\end{proof}

In addition, the non-localized profile $P$ was first introduced in~\cite[Proposition 2.2]{MMRACTA} to study the dynamics near the single soliton for mass-critical gKdV equation~\eqref{equ:gKdV}.

\begin{lemma}
    [\cite{MMRACTA}]\label{le:P}
    There exists a unique function $P$ such that $P'\in \mathcal{Y}$ and 
    \begin{equation*}
        (\mathcal{L}P)'=\Lambda Q,\ \ 
        \lim_{y\to -\infty}P(y)=2m_{0},\ \ \lim_{y\to \infty}P(y)=0.
    \end{equation*}
    In particular, the following two identities hold:
    \begin{equation*}
        (P,Q')=0\ \ \mbox{and}\ \
        (P,Q)=m_{0}^{2}>0.
    \end{equation*}
\end{lemma}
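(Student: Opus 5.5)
We need to prove Lemma on P: existence and uniqueness of $P$ with $P'\in\mathcal Y$, $(\mathcal L P)'=\Lambda Q$, limits $2m_0$ at $-\infty$ and $0$ at $+\infty$, and the identities $(P,Q')=0$, $(P,Q)=m_0^2$.

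The plan is to integrate the equation once and invert $\mathcal{L}$ on a suitable class. Since $\Lambda Q\in\mathcal{Y}$ and $\int_{\RR}\Lambda Q\,\dd y=\frac12\int Q-\int Q=-\frac12\int Q=-2m_0$, I set
\[
R(y)=\int_{y}^{\infty}\Lambda Q(z)\,\dd z .
\]
Then $R'=-\Lambda Q$, $R(y)\to 0$ exponentially as $y\to\infty$, and $R(y)\to -2m_0$ exponentially as $y\to-\infty$. The target is $\mathcal{L}P=-R$, up to an additive constant that is forced to vanish by $P\to 0$ at $+\infty$; this matches $(\mathcal{L}P)'=\Lambda Q$.

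To handle the non-decaying right side, I write $P=2m_0\chi+\widetilde P$, where $\chi$ is a smooth cutoff with $\chi=1$ on $(-\infty,-1]$ and $\chi=0$ on $[1,\infty)$. Then
\[
\mathcal{L}\widetilde P=-R-2m_0\mathcal{L}\chi=-R-2m_0\chi+2m_0\chi''+10m_0Q^4\chi=:h .
\]
As $y\to-\infty$ we have $-R-2m_0\chi\to 2m_0-2m_0=0$ exponentially, and the remaining terms decay exponentially, so $h$ is smooth and $|h^{(n)}|\lesssim e^{-|y|}$ up to polynomial factors, i.e.\ $h\in\mathcal Y$ up to harmless modification. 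To invert via (iii) of Proposition~\ref{prop:L}, I need $(h,Q')=0$. Since $\mathcal L Q'=0$ and $\mathcal L$ is self-adjoint (integration by parts is legitimate because $\chi$ and $Q'$ are nice), $(\mathcal L\chi,Q')=0$. Also
\[
(R,Q')=-(R',Q)=(\Lambda Q,Q)=0
\]
by (ii); the boundary terms vanish because $Q$ decays. Hence $(h,Q')=0$, and (iii) yields $\widetilde P\in\mathcal{Y}$. Adding a multiple of $Q'$ does not affect the equation or the limits, so I choose the multiple to arrange $(P,Q')=0$. Then $P'=2m_0\chi'+\widetilde P'\in\mathcal Y$, the limits are $2m_0$ and $0$, and $(\mathcal L P)'=-R'=\Lambda Q$.

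Uniqueness should be stated modulo the normalization $(P,Q')=0$, which I take as part of the uniqueness statement. If $P_1,P_2$ both satisfy the conditions, then $D=P_1-P_2$ has $D'\in\mathcal Y$ and $(\mathcal L D)'=0$. Hence $\mathcal L D$ is a constant, and since $D\to0$ at both ends together with its derivatives, that constant is $0$. The function $D$ is bounded and $\mathcal L D=0$; since the second solution of the ODE grows like $e^{|y|}$, $D\in\mathrm{Span}\{Q'\}$, and the normalization gives $D=0$.

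For $(P,Q)$, I use $\mathcal L\Lambda Q=-2Q$. Then
\[
(P,Q)=-\tfrac12(P,\mathcal L\Lambda Q)=-\tfrac12(\mathcal L P,\Lambda Q)=\tfrac12(R,\Lambda Q)=-\tfrac12(R,R'),
\]
where the self-adjointness step is justified since $\Lambda Q$ decays exponentially and $P$ is bounded with $P'$ decaying. Finally,
\[
-\tfrac12(R,R')=-\tfrac14\bigl[R^2\bigr]_{-\infty}^{\infty}=-\tfrac14\bigl(0-4m_0^2\bigr)=m_0^2 .
\]

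The main technical point is the justification of the integrations by parts with the non-decaying $P$, together with showing that the cutoff decomposition produces an $h$ genuinely in the class to which (iii) applies, including the derivative estimates on $R+2m_0$ as $y\to-\infty$. Both follow from the exponential asymptotics \eqref{est:asymQ-}–\eqref{est:asymQ+}.
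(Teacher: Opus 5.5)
Your proof is correct. The paper gives no argument for this lemma; it quotes it from \cite{MMRACTA}. Your construction is the standard one. You split off a non-localized piece carrying the limits and check that the localized right-hand side lies in $\mathcal{Y}$ and is orthogonal to $Q'$; the key point there is $(R,Q')=(\Lambda Q,Q)=0$. You then invert $\mathcal{L}$ by (iii) of Proposition~\ref{prop:L} and compute $(P,Q)$ via $\mathcal{L}\Lambda Q=-2Q$, the same trick the paper uses for $(Y,Q)$ in Lemma~\ref{le:Y}.

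Your caveat on uniqueness is also right. $P+cQ'$ satisfies every condition in the first display, so uniqueness holds only once $(P,Q')=0$ is imposed.

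A slightly more economical choice of the non-localized piece is $-R$ itself rather than $2m_0\chi$. Write $P=\widetilde P-R$; then one needs $\mathcal{L}\widetilde P=\mathcal{L}R-R=(\Lambda Q)'-5Q^{4}R$, which lies in $\mathcal{Y}$. It is orthogonal to $Q'$ for the same two reasons: self-adjointness against $Q'\in\mathrm{Ker}\,\mathcal{L}$, and $(R,Q')=0$. Taking $\widetilde P\perp Q'$ from (iii) then gives $(P,Q')=(\widetilde P,Q')-(R,Q')=0$ automatically, with no extra $Q'$-adjustment. The rest of your argument (limits, $P'=\widetilde P'+\Lambda Q\in\mathcal{Y}$, and the value $(P,Q)=m_0^2$) goes through unchanged.
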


Define $X=P-2m_{0}(1+5Y)$. The following properties then follow from Lemma~\ref{le:Y}--\ref{le:P} and the definition of $\mathcal{Y}$ (see~\cite[Remark 2.5 and Lemma 2.6]{LYPRINT} for more details).
\begin{corollary}
[\cite{LYPRINT}]\label{coro:X}
   We have $X'\in \mathcal{Y}$ and 
    \begin{equation*}
        (\mathcal{L}X)'=\Lambda Q,\ \ 
        \lim_{y\to -\infty}X(y)=0,\ \ \lim_{y\to \infty}X(y)=-2m_{0}.
    \end{equation*}
     In particular, the following two identities hold:
    \begin{equation*}
        (X,Q')=0\ \ \mbox{and}\ \
        (X,Q)=-m_{0}^{2}<0.
    \end{equation*}
    Moreover, for any $n\in \mathbb{N}$, we have 
    \begin{equation*}
        \begin{aligned}
            |X^{(n)}(y)|\mathbf{1}_{(-\infty,0]}(y)&\lesssim (1+|y|^{2})e^{-|y|},\\
              |(X(y)+2m_{0})^{(n)}|\mathbf{1}_{(0,\infty)}(y)&\lesssim (1+|y|^{2})e^{-|y|}.
        \end{aligned}
    \end{equation*}
\end{corollary}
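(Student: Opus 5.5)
Since $X=P-2m_{0}(1+5Y)$, every claim in Corollary~\ref{coro:X} follows from Lemma~\ref{le:Y} and Lemma~\ref{le:P}, handled term by term. I take the items in the order they are stated.

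\emph{Regularity and equation.} First, $X'=P'-10m_{0}Y'$. Here $P'\in\mathcal{Y}$ by Lemma~\ref{le:P}, and $Y'\in\mathcal{Y}$ because $Y\in\mathcal{Y}$ and $\mathcal{Y}$ is stable under differentiation. Hence $X'\in\mathcal{Y}$. For the equation, linearity gives
\[
(\mathcal{L}X)'=(\mathcal{L}P)'-2m_{0}\big(\mathcal{L}(1+5Y)\big)'=\Lambda Q-0=\Lambda Q,
\]
using Lemma~\ref{le:P} and Lemma~\ref{le:Y}. For the limits, $Y(y)\to0$ as $|y|\to\infty$, so
\[
\lim_{y\to-\infty}X=2m_{0}-2m_{0}=0,\qquad \lim_{y\to+\infty}X=0-2m_{0}=-2m_{0}.
\]

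\emph{Scalar products.} We have $(X,Q')=(P,Q')-2m_{0}(1,Q')-10m_{0}(Y,Q')=0$: the first and third terms vanish by the two lemmas, and $(1,Q')=0$ since $Q$ decays. Next,
\[
(X,Q)=m_{0}^{2}-2m_{0}\int Q-10m_{0}(Y,Q)=m_{0}^{2}-8m_{0}^{2}+6m_{0}^{2}=-m_{0}^{2},
\]
using $\int Q=4m_{0}$ and $(Y,Q)=-\tfrac35m_{0}$.

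\emph{Pointwise bounds.} This is the only step needing care: $\mathcal{Y}$ permits the polynomial factor $(1+|y|)^{c_{n}}$, while the statement asks for the factor $(1+|y|^{2})$.
\begin{itemize}
\item For $n\ge1$, $|X^{(n)}|=|(X+2m_{0})^{(n)}|\le|P^{(n)}|+10m_{0}|Y^{(n)}|$. The bound then requires the decay rates of $P'$ and $Y$ with at most a $|y|^{2}$ weight.
\item For $n=0$, integrate $X'$ from $-\infty$ on $(-\infty,0]$, using $X(-\infty)=0$. On $(0,\infty)$, integrate from $+\infty$, using $X(+\infty)=-2m_{0}$. Integration preserves $e^{-|y|}$ decay at the cost of at most one power of $|y|$.
\end{itemize}

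To get the sharp weights I would use the ODE for $X$. Integrating $(\mathcal{L}X)'=\Lambda Q$ from $+\infty$ (where $\mathcal{L}X\to-2m_{0}$) gives
\[
-X''+X-5Q^{4}X=\int_{-\infty}^{y}\Lambda Q-2m_{0}.
\]
Variation of constants, together with the asymptotics \eqref{est:asymQ-}--\eqref{est:asymQ+} (in particular $\Lambda Q\sim c\,y e^{-|y|}$), then yields decay of order $(1+|y|^{2})e^{-|y|}$ on each half-line. This matches the computation in \cite[Lemma 2.6]{LYPRINT}, which I would follow.
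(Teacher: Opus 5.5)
Your term-by-term verification of $X'\in\mathcal{Y}$, of $(\mathcal{L}X)'=\Lambda Q$, of the two limits and of $(X,Q')=0$, $(X,Q)=-m_{0}^{2}$ is correct. It follows the paper's route. The paper also reads everything off Lemma~\ref{le:Y}, Lemma~\ref{le:P} and $X=P-2m_{0}(1+5Y)$, and it defers the sharp $(1+|y|^{2})$ weights to \cite[Lemma 2.6]{LYPRINT}.

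You are right that $X'\in\mathcal{Y}$ alone does not give those weights. However, the integrated equation in your sketch is wrong by a constant. Integrate from $+\infty$, using $\mathcal{L}X(+\infty)=-2m_{0}$ and $\int_{\RR}\Lambda Q=-2m_{0}$:
\[
\mathcal{L}X(y)=-2m_{0}-\int_{y}^{\infty}\Lambda Q=\int_{-\infty}^{y}\Lambda Q .
\]
So the right-hand side is not $\int_{-\infty}^{y}\Lambda Q-2m_{0}$. Your version tends to $-2m_{0}$ at $-\infty$ and to $-4m_{0}$ at $+\infty$. Since $X''\to0$ and $Q^{4}X\to0$, $\mathcal{L}X$ must have the same limits as $X$, namely $0$ and $-2m_{0}$. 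As written, your boundary value problem is therefore inconsistent.

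With the correct source the argument closes:
\begin{itemize}
\item As $y\to-\infty$, $\mathcal{L}X\approx c_{Q}\,y\,e^{y}$.
\item As $y\to+\infty$, $\mathcal{L}(X+2m_{0})=-\int_{y}^{\infty}\Lambda Q-10m_{0}Q^{4}\approx c_{Q}\,y\,e^{-y}$.
\end{itemize}
This forcing resonates with the homogeneous solutions $e^{\pm y}$ of $-\partial_{y}^{2}+1$, and the term $5Q^{4}X$ is negligible. In the variation of constants formula the resonance produces exactly the factor $|y|^{2}$. So the stated weights are sharp and are obtained by your method. The derivative bounds then follow by differentiating the equation.
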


\subsection{Leading-order of the interactions}\label{SS:Leading}
Consider the following geometric parameters for the 3-soliton approximate solution:
\begin{equation*}
    \Gamma=\left(\Gamma_{1},\Gamma_{2},\Gamma_{3}\right)\in \RR^{9}, \ \ \
    \mbox{with}\ \  \Gamma_{k}=\left(x_{k},\nu_{k},b_{k}\right)\in \RR^{3}\ \ \mbox{for any}\  k\in [\![1,3]\!].
\end{equation*}
Here, the parameters $(x_{k},\nu_{k},b_{k})$ denote respectively the position, the scaling and a quantity related to the non-localized profile $X$ for the $k$-th soliton. Assume that 
\begin{equation}\label{est:xnu}
   r_{1}= x_{2}-x_{1}\gg 1\ \ \mbox{and}\ \ r_{2}=x_{3}-x_{2}\gg 1.
\end{equation}
Assume moreover that 
\begin{equation}\label{est:nub}
    \nu=\max(|\nu_{1}|,|\nu_{2}|,|\nu_{3}|)\ll 1\ \ \mbox{and}\ \ 
    b=\max(|b_{1}|,|b_{2}|,|b_{3}|)\ll 1.
\end{equation}
To state the estimates for the nonlinear interactions, we introduce 
\begin{equation}\label{est:rR}
    r=\min(r_{1},r_{2})\gg 1\ \ \mbox{and}\ \ 
    R=\max(r_{1},r_{2})\gg 1.
\end{equation}
In addition, we assume
\begin{equation}\label{est:xnu2}
    \nu R^{2}\lesssim \nu \left((x_{2}-x_{1})^{2}+(x_{3}-x_{2})^{2}\right)\ll 1.
\end{equation}

For any $k\in [\![1,3]\!]$ and smooth real-valued function $f:\RR\to \RR$, we denote 
\begin{equation}\label{def:T}
    (\Theta_{k}f)(t,y)=f_{1+\nu_{k}}(y-x_{k})
    =(1+\nu_{k})^{\frac{1}{4}}f((1+\nu_{k})^{\frac{1}{2}}(y-x_{k})).
\end{equation}
To simplify of notation, we also denote 
\begin{equation*}
    \Lambda_{k}\left(\Theta_{k}f\right)=\Theta_{k}(\Lambda f)=(1+\nu_{k})^{\frac{1}{4}}\left(\Lambda f\right)\left((1+\nu_{k})^{\frac{1}{2}}(y-x_{k})\right).
\end{equation*}
In particular, we set
\begin{equation}\label{equ:defQkYkXk}
    Q_{k}=\Theta_{k}Q,\ \
    \Lambda_{k}Q_{k}=\Theta_{k}(\Lambda Q), \ \
    Y_{k}=\Theta_{k}Y \ \  \mbox{and} \  \
    X_{k}=\Theta_{k}X.
\end{equation}
To state the nonlinear interactions between three solitons, we define 
\begin{equation}\label{equ:defG}
   U=Q_{1}-Q_{2}-Q_{3}\ \ \mbox{and}\ \  G=U^{5}-Q_{1}^{5}+Q_{2}^{5}+Q_{3}^{5}.
\end{equation}
Moreover, we introduce the following nonlinear interaction term:
\begin{equation}\label{equ:defH}
    H=5Q_{1}Q_{2}^{4}-5Q_{1}^{4}Q_{2}-5Q_{2}^{4}Q_{3}-5Q_{2}Q_{3}^{4}.
\end{equation}

We start with the following technical lemma concerning nonlinear interactions.
\begin{lemma}\label{le:boundinte}
    Let $(\theta,M_{1},M_{2})\in (0,\infty)\times \mathbb{N}^{2}$. Then the following estimates hold.
    \begin{enumerate}
        \item \emph{First-type estimates}.  Assume that $f_{1}\in C^{\infty}(\RR;\RR)\cap L^{\infty}(\RR;\RR)$ satisfies
    \begin{equation}\label{est:f1}
        |f_{1}(y)|+|f'_{1}(y)|\lesssim (1+|y|)^{M_{1}}e^{-\theta |y|},\ \ \mbox{on}\  \RR.
    \end{equation}
    Assume moreover that $f_{2}\in C^{\infty}(\RR;\RR)\cap L^{\infty}(\RR;\RR)$ satisfies
    \begin{equation}\label{est:f2}
    |f_{2}(y)|\mathbf{1}_{(-\infty,0]}(y)+|f'_{2}(y)|\lesssim (1+|y|)^{M_{2}}e^{-\theta|y|}, \ \ \mbox{on}\ \RR.
    \end{equation}
    Then, for all $(z_{1},z_{2})\in \RR^{2}$ with $D=z_{2}-z_{1}\gg 1$, we have 
    \begin{equation*}
    \begin{aligned}
       \left| \left(f_{1}(\cdot-z_{1}),f_{2}(\cdot-z_{2})\right)\right|&\lesssim 
       \left(|z_{1}|^{M_{1}+M_{2}+1}+|z_{2}|^{M_{1}+M_{2}+1}\right)e^{-\theta D},\\
       \|f_{1}(\cdot-z_{1})f_{2}(\cdot-z_{2})\|_{H^{1}}
       &\lesssim 
       \left(|z_{1}|^{M_{1}+M_{2}+\frac{1}{2}}+|z_{2}|^{M_{1}+M_{2}+\frac{1}{2}}\right)e^{-\theta D}.
       \end{aligned}
    \end{equation*}
    \item \emph{Second-type estimates}.  Assume that $f_{1}\in C^{\infty}(\RR;\RR)\cap L^{\infty}(\RR;\RR)$ satisfies
    \begin{equation*}
        |f_{1}(y)|\mathbf{1}_{(0,\infty)}(y)+|f'_{1}(y)|\lesssim (1+|y|)^{M_{1}}e^{-\theta|y|}, \ \ \mbox{on}\ \RR.
    \end{equation*}
    Assume moreover that $f_{2}\in C^{\infty}(\RR;\RR)\cap L^{\infty}(\RR;\RR)$ satisfies
     \begin{equation*}
        |f_{2}(y)|+|f'_{2}(y)|\lesssim (1+|y|)^{M_{2}}e^{-\theta |y|},\ \ \mbox{on}\  \RR.
    \end{equation*}
    Then, for all $(z_{1},z_{2})\in \RR^{2}$ with $D=z_{2}-z_{1}\gg 1$, we have 
    \begin{equation*}
    \begin{aligned}
       \left| \left(f_{1}(\cdot-z_{1}),f_{2}(\cdot-z_{2})\right)\right|&\lesssim 
       \left(|z_{1}|^{M_{1}+M_{2}+1}+|z_{2}|^{M_{1}+M_{2}+1}\right)e^{-\theta D},\\
       \|f_{1}(\cdot-z_{1})f_{2}(\cdot-z_{2})\|_{H^{1}}
       &\lesssim 
       \left(|z_{1}|^{M_{1}+M_{2}+\frac{1}{2}}+|z_{2}|^{M_{1}+M_{2}+\frac{1}{2}}\right)e^{-\theta D}.
       \end{aligned}
    \end{equation*}
    \end{enumerate}
  \end{lemma}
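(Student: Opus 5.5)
The plan is a direct computation after translating so that one profile sits at the origin, followed by a split of the real line into three regions according to which decay hypothesis is available. First, all bounds will be reduced to powers of $D$. Since $D=z_{2}-z_{1}\le |z_{1}|+|z_{2}|$ and $D\gg 1$, for any $N\ge 0$ we have $(1+D)^{N}\lesssim |z_{1}|^{N}+|z_{2}|^{N}$. It therefore suffices to prove
\begin{equation*}
\left|\left(f_{1}(\cdot-z_{1}),f_{2}(\cdot-z_{2})\right)\right|\lesssim (1+D)^{M_{1}+M_{2}+1}e^{-\theta D}
\end{equation*}
and
\begin{equation*}
\|f_{1}(\cdot-z_{1})f_{2}(\cdot-z_{2})\|_{H^{1}}\lesssim (1+D)^{M_{1}+M_{2}+\frac{1}{2}}e^{-\theta D}.
\end{equation*}
Changing variables $s=y-z_{1}$, the scalar product becomes $\int_{\RR} f_{1}(s)f_{2}(s-D)\,\dd s$.

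For the first-type estimates, I split the integral into $s<0$, $0\le s\le D$ and $s>D$.
\begin{itemize}
\item On $s<0$, we have $s-D<0$, so both decay bounds hold. The integrand is bounded by $(1+|s|)^{M_{1}}(1+D+|s|)^{M_{2}}e^{-\theta D}e^{2\theta s}$, which integrates to $O((1+D)^{M_{2}}e^{-\theta D})$.
\item On $[0,D]$, again $s-D\le 0$, and $e^{-\theta|s|-\theta|s-D|}=e^{-\theta D}$. The polynomial factors are $\lesssim (1+D)^{M_{1}+M_{2}}$, and the interval has length $D$. This region gives the dominant contribution $(1+D)^{M_{1}+M_{2}+1}e^{-\theta D}$.
\item On $s>D$, only $f_{2}\in L^{\infty}$ is available. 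However, $|f_{1}(s)|\lesssim (1+s)^{M_{1}}e^{-\theta D}e^{-\theta(s-D)}$, which integrates to $O((1+D)^{M_{1}}e^{-\theta D})$.
\end{itemize}
The $L^{2}$ norm of the product is handled by the same splitting applied to the squared integrand. The weight becomes $e^{-2\theta D}$ on $[0,D]$, giving $(1+D)^{2(M_{1}+M_{2})+1}e^{-2\theta D}$, whose square root yields the exponent $M_{1}+M_{2}+\frac{1}{2}$. For the derivative, I write $(f_{1}f_{2})'=f_{1}'f_{2}+f_{1}f_{2}'$. The term $f_{1}'f_{2}$ satisfies exactly the hypotheses just used, since \eqref{est:f1} also bounds $f_{1}'$. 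The term $f_{1}f_{2}'$ is even better, because $f_{2}'$ decays on all of $\RR$ by \eqref{est:f2}. Hence both derivative terms obey the same bound.

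The second-type estimates follow by the mirror argument. Now $f_{2}$ decays everywhere, while $f_{1}$ is merely bounded on $(-\infty,0]$ (its derivative still decays everywhere). On $s<0$, I use only $\|f_{1}\|_{L^{\infty}}$ together with $|f_{2}(s-D)|\lesssim (1+D-s)^{M_{2}}e^{-\theta D}e^{\theta s}$, which is integrable. On $[0,D]$ and on $s>D$, both decay bounds hold, and the estimates are identical to those above. The derivative is treated in the same way: $f_{1}'f_{2}$ has both factors decaying, and $f_{1}f_{2}'$ has the same structure as the product $f_{1}f_{2}$.

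There is no real obstacle here. The only point requiring care is the bookkeeping of the polynomial factors on $[0,D]$, which produces the extra power $+1$ (respectively $+\frac{1}{2}$), and the final conversion of powers of $D$ into $|z_{1}|^{N}+|z_{2}|^{N}$ using $D\gg 1$.
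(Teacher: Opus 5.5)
Your proposal is correct and follows essentially the same route as the paper. The paper splits $\RR$ into $(-\infty,z_{1})$, $(z_{1},z_{2})$, $(z_{2},\infty)$, which are your regions $s<0$, $[0,D]$, $s>D$ after translation. It uses both decay bounds on the first two regions and only $f_{2}\in L^{\infty}$ on the last, and the length-$D$ middle interval produces the extra power, exactly as in your argument. The only difference is cosmetic: you bound all polynomial factors by powers of $1+D$ and convert to $|z_{1}|^{N}+|z_{2}|^{N}$ only at the end, which is slightly cleaner than the paper's intermediate bookkeeping in terms of $|z_{1}|$ and $|z_{2}|$.
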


  \begin{proof}
      Proof of (i). First, from~\eqref{est:f2}, for $y\in (-\infty,z_{1})$, we have 
      \begin{equation*}
        |f_{2}(y-z_{2})|+|f'_{2}(y-z_{2})|\lesssim
        \left(1+|y-z_{1}|^{M_{2}}+|z_{2}|^{M_{2}}\right)e^{-\theta|y-z_{1}|}e^{-\theta D}.
      \end{equation*}
      It follows directly from~\eqref{est:f1} that 
      \begin{equation*}
      \begin{aligned}
         & |f_{1}(y-z_{1})f_{2}(y-z_{2})|+|(f_{1}(y-z_{1})f_{2}(y-z_{2}))'|\\
         &\lesssim 
         e^{-\theta D}\left(1+|y-z_{1}|^{M_{1}+M_{2}}+|z_{2}|^{M_{1}+M_{2}}\right)e^{-2\theta|y-z_{1}|}.
         \end{aligned}
      \end{equation*}
      Similarly, for $y\in (z_{2},\infty)$, we find
          \begin{equation*}
      \begin{aligned}
         & |f_{1}(y-z_{1})f_{2}(y-z_{2})|+|(f_{1}(y-z_{1})f_{2}(y-z_{2}))'|\\
         &\lesssim 
         e^{-\theta D}\left(1+|y-z_{2}|^{M_{1}+M_{2}}+|z_{1}|^{M_{1}+M_{2}}\right)e^{-\theta|y-z_{2}|}.
         \end{aligned}
      \end{equation*}
      Then, using again~\eqref{est:f1}--\eqref{est:f2}, for $y\in (z_{1},z_{2})$, we obtain 
          \begin{equation*}
      \begin{aligned}
          |f_{1}(y-z_{1})f_{2}(y-z_{2})|&\lesssim  e^{-\theta D}\left(1+|z_{1}|^{M_{1}+M_{2}}+|z_{2}|^{M_{1}+M_{2}}\right),\\
          |(f_{1}(y-z_{1})f_{2}(y-z_{2}))'|&\lesssim  e^{-\theta D}\left(1+|z_{1}|^{M_{1}+M_{2}}+|z_{2}|^{M_{1}+M_{2}}\right).
         \end{aligned}
      \end{equation*}
      Integrating the above estimates over $\RR$, we complete the proof of (i).

      \smallskip
      Proof of (ii). The proof is similar to (i), and we omit it.
  \end{proof}

  The following estimates then follow from~\eqref{est:asymQ-}--\eqref{est:asymQ+},~\eqref{est:xnu2} and Corollary~\ref{coro:X}.
  \begin{corollary}\label{coro:inte}
      It holds
\begin{equation*}
\begin{aligned}
    \left|(Q_{1},X_{2})\right|
    +\left|(Q_{2},X_{1}+2(1+\nu_{1})^{\frac{1}{4}}m_{0})\right|
    \lesssim R^{3}e^{-r},
    \\
\left|(Q_{1},X_{3})\right| +\left|(Q_{3},X_{1}+2(1+\nu_{1})^{\frac{1}{4}}m_{0})\right|
\lesssim R^{3}e^{-r},
\\
\left|(Q_{2},X_{3})\right| +\left|(Q_{3},X_{2}+2(1+\nu_{2})^{\frac{1}{4}}m_{0})\right|\lesssim R^{3}e^{-r}.
    \end{aligned}
\end{equation*}
Moreover, we have 
\begin{equation*}
    \begin{aligned}
         \left\|\partial_{y}\left(Q^{4}_{1}X_{2}\right)\right\|_{H^{1}}
          +\left\|
          \partial_{y}\left(Q^{4}_{2}(X_{1}+2(1+\nu_{1})^{\frac{1}{4}}m_{0})\right)\right\|_{H^{1}}\lesssim 
          R^{\frac{5}{2}}e^{-r},\\
           \left\|\partial_{y}\left(Q^{4}_{1}X_{3}\right)\right\|_{H^{1}}
          +\left\|\partial_{y}\left(Q^{4}_{3}(X_{1}+2(1+\nu_{1})^{\frac{1}{4}}m_{0})\right)\right\|_{H^{1}}\lesssim 
          R^{\frac{5}{2}}e^{-r},\\
           \left\|\partial_{y}\left(Q^{4}_{2}X_{3}\right)\right\|_{H^{1}}
          +\left\|\partial_{y}\left(Q^{4}_{3}(X_{2}+2(1+\nu_{2})^{\frac{1}{4}}m_{0})\right)\right\|_{H^{1}}\lesssim 
          R^{\frac{5}{2}}e^{-r}.
    \end{aligned}
\end{equation*}
  \end{corollary}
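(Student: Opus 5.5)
The plan is to deduce every estimate in Corollary~\ref{coro:inte} from Lemma~\ref{le:boundinte}, applied with $\theta=\tfrac{1}{2}$ or so after rescaling. Each term is an inner product, or an $H^{1}$ norm of a product, of two modulated profiles centered at $x_{k}$ and $x_{\ell}$ with $k<\ell$. The case split is by which factor sits on the left.

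\textbf{Rescaling.} First change variables $z=(1+\nu_{k})^{\frac12}y$ so that one profile is unscaled. The other profile then has scaling $(1+\nu_{\ell})^{\frac12}(1+\nu_k)^{-\frac12}=1+O(\nu)$. The centers become $z_{j}=(1+\nu_{k})^{\frac12}x_{j}$, with $D=(1+\nu_k)^{\frac12}(x_\ell-x_k)$. A rescaled exponential $e^{-(1+O(\nu))|y|}$ differs from $e^{-|y|}$ by a factor $e^{O(\nu)|y|}$. Evaluated near the separation scale $|y|\lesssim R$, this factor is harmless by \eqref{est:xnu2}, since $\nu R\ll 1$. For this reason I keep $\theta=1$ up to a factor $e^{O(\nu R)}=O(1)$. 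Alternatively, I can redo the proof of Lemma~\ref{le:boundinte} directly with the two rates $1$ and $1+O(\nu)$, which gives the bound $e^{-D}e^{O(\nu D)}\lesssim e^{-(x_\ell-x_k)}$.

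\textbf{Applying the lemma.}
\begin{itemize}
\item \emph{Terms $(Q_{k},X_{\ell})$, $k<\ell$.} Take $f_1=Q$, which decays on both sides with $M_1=0$, and $f_2=X$. By Corollary~\ref{coro:X}, $X$ decays on $(-\infty,0]$ and $X'$ decays everywhere, with $M_2=2$. Part (i) gives a bound $(|z_1|^3+|z_2|^3)e^{-D}$.
\item \emph{Terms $(Q_\ell, X_k+2(1+\nu_k)^{\frac14}m_0)$.} Set $f_1=X+2m_0$, after rescaling the constant $(1+\nu_k)^{\frac14}$ is absorbed exactly. This $f_1$ decays on $(0,\infty)$ and its derivative decays everywhere. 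Part (ii) applies with $f_2=Q$.
\end{itemize}
Since $D\ge r$, it remains to bound the polynomial prefactors $|z_j|^{3}$ by $R^{3}$. The lemma's prefactor involves the absolute positions, but both estimates are translation invariant. I will therefore translate first so that $z_1=0$, which gives $|z_2|=D\lesssim R$, and in the three-soliton terms $D\le 2R$. This yields $R^{3}e^{-r}$.

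\textbf{The $H^1$ bounds.} Write
\[
\partial_y(Q_k^4X_\ell)=4Q_k^3Q_k'X_\ell+Q_k^4X_\ell'.
\]
The first summand has $f_1=4Q^3Q'$ with $f_2=X$, and the second has $f_1=Q^4$ with $f_2=X'$, which decays on both sides. Both are covered by part (i), with $\theta=1$ and $M_1+M_2=2$. The $H^1$ estimate of the lemma then gives $R^{5/2}e^{-r}$ after translating $z_1=0$.

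There is one caveat. The lemma's hypotheses only control $f$ and $f'$, while the $H^1$ norm of a product requires control of $(f_1f_2)'$. That is exactly what the lemma proves, and since $\partial_y(Q_k^4X_\ell)$ already carries a derivative, I need $f_j'$ bounds on $4Q^3Q'$ and $X'$. These are supplied for all $n$ by Corollary~\ref{coro:X} and $Q\in\mathcal Y$. The mirror terms with $X_k+2(1+\nu_k)^{\frac14}m_0$ follow in the same way from part (ii).

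\textbf{Main obstacle.} The only delicate step is the slight mismatch of decay rates caused by the different scalings $\nu_k\ne\nu_\ell$. In the middle region $(z_1,z_2)$ the product is bounded by $e^{-y}e^{-(1+O(\nu))(D-y)}\le e^{-D}e^{O(\nu D)}$, and \eqref{est:xnu2} makes $e^{O(\nu D)}\lesssim1$. I would handle this by rerunning the three-region argument of Lemma~\ref{le:boundinte} verbatim with these rates. Everything else is routine bookkeeping.
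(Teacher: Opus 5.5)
Your proposal is correct and is essentially the argument the paper intends. The paper's one-line justification amounts to Lemma~\ref{le:boundinte} with $M_1=0$ (from \eqref{est:asymQ-}--\eqref{est:asymQ+}) and $M_2=2$ (from Corollary~\ref{coro:X}), with $f_1=X+2m_0$ for the second-type terms and \eqref{est:xnu2} absorbing the $e^{O(\nu R)}$ loss from the mismatched scalings; this produces exactly the prefactors $R^{3}$ and $R^{5/2}$. The one thing to drop is your opening mention of $\theta=\tfrac12$, which would only give $e^{-r/2}$ for adjacent solitons; the $\theta=1$ version with the $e^{O(\nu R)}$ correction, which you actually carry out, is the right one.
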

  Then, we study the nonlinear interactions between the three solitons.
  \begin{lemma}\label{le:asymptotic}
      The following estimates hold.
      \begin{enumerate}
          \item  \emph{Asymptotic.} We have 
      \begin{equation*}
      \begin{aligned}
         \partial_{y}H&=5c_{Q}e^{-r_{1}}\partial_{y}\left(e^{-(y-x_{2})}Q^{4}(y-x_{2})\right)
         \\
         &-5c_{Q}e^{-r_{2}}\partial_{y}\left(e^{-(y-x_{3})}Q^{4}(y-x_{3})\right)\\
         &-5c_{Q}e^{-r_{1}}\partial_{y}\left(e^{y-x_{1}}Q^{4}(y-x_{1})\right)
         +O_{H^{1}}\left(e^{-2r}\right)
         \\
         &-5c_{Q}e^{-r_{2}}\partial_{y}\left(e^{y-x_{2}}Q^{4}(y-x_{2})\right)
         +O_{H^{1}}\left(\nu R^{\frac{3}{2}}e^{-r}\right).
         \end{aligned}
      \end{equation*}

      \item \emph{Bound}. We have 
      \begin{equation*}
        \left\|  \partial_{y}G-\partial_{y}H\right\|_{H^{1}}\lesssim R^{\frac{1}{2}}e^{-2r}.
      \end{equation*}
      \end{enumerate}
  \end{lemma}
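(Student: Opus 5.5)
Both estimates come from expanding $U^5$ for $U=Q_1-Q_2-Q_3$ and localising each term with Lemma~\ref{le:boundinte}.

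\textbf{Multinomial expansion and part (ii).} We have
\[
G=5Q_1^4(-Q_2-Q_3)+5Q_1(Q_2+Q_3)^4-5Q_2^4Q_3-5Q_2Q_3^4+\mathcal{R}.
\]
Here $\mathcal{R}$ collects every monomial $Q_1^aQ_2^bQ_3^c$ with $a+b+c=5$ in which at least two indices have exponent $\ge 2$, or all three indices appear. The $Q_2Q_3$ cross terms are rewritten the same way.

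\begin{itemize}
\item \emph{Terms with $\ge 2$ in two indices.} A product such as $Q_i^2Q_j^2$ with $i<j$ is bounded pointwise by $e^{-2|y-x_i|}e^{-2|y-x_j|}$ up to polynomial factors. By Lemma~\ref{le:boundinte}(i) with $\theta=2$ (or by a direct splitting), it contributes $O_{H^2}(R^{1/2}e^{-2r})$ after one derivative.
\item \emph{Terms involving all three solitons.} Such a term carries factors $e^{-r_1}e^{-r_2}\le e^{-2r}$, so it is bounded the same way.
\item \emph{Linear-in-one-index terms with distant pairs.} The terms $Q_1^4Q_3$ and $Q_1Q_3^4$ are of size $e^{-(r_1+r_2)}\le e^{-2r}$.
\end{itemize}

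What remains is exactly $H=5Q_1Q_2^4-5Q_1^4Q_2-5Q_2^4Q_3-5Q_2Q_3^4$, where the signs come from $\sigma$. Taking $\partial_y$ and the $H^1$ norm, with the $R^{1/2}$ coming from the length of the interval between the solitons, gives (ii).

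\textbf{Part (i).} Treat each term of $H$ separately. For $Q_1Q_2^4$, replace $Q_1$ on the support of $Q_2^4$ by its right tail using \eqref{est:asymQ+}:
\[
Q_1(y)=c_Qe^{-(y-x_1)}+O(e^{-2(y-x_1)})\quad\text{for } y-x_1\gg1.
\]
Then $e^{-(y-x_1)}=e^{-r_1}e^{-(y-x_2)}$. Applying the analogous left-tail asymptotic \eqref{est:asymQ-} to the factor $Q_2$ in $Q_1^4Q_2$ gives $e^{y-x_2}=e^{-r_1}e^{y-x_1}$. The same manipulation handles the pair $(Q_2,Q_3)$.

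Two issues need care.

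\emph{Scaling parameters.} The modulated profiles $Q_k=\Theta_kQ$ carry the parameters $\nu_k$. Replacing $(1+\nu_k)^{1/4}Q((1+\nu_k)^{1/2}\cdot)$ by $Q$, and its exponential tail, costs $|\nu_k|\,|y-x_j|$ times the main term through \eqref{equ:Taylornu}. Over the interaction region this factor is $O(\nu R)$, and the $H^1$ norm after the derivative adds $R^{1/2}$. This produces the error $O_{H^1}(\nu R^{3/2}e^{-r})$, and \eqref{est:xnu2} guarantees it is small.

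\emph{Remainder of the tail expansion.} The remainder $O(e^{-2(y-x_1)})$, multiplied by $Q_2^4$, is of order $e^{-2r_1}$ near $x_2$. Away from both centres we keep the crude bound $Q_1Q_2^4\lesssim e^{-(y-x_1)}e^{-4|y-x_2|}$. Splitting $\RR$ at the midpoint $(x_1+x_2)/2$, the expansion is used only where $y-x_1\ge r_1/2$. On the complement, $Q_2^4\lesssim e^{-2r_1}$, so the error there is also $O(e^{-2r})$. The derivative terms are handled identically, since $Q'$ has the matching asymptotics in \eqref{est:asymQ-}--\eqref{est:asymQ+}.

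\textbf{Main obstacle.} The main difficulty is this bookkeeping. We must confirm that the $\nu$-dependence enters only at relative order $\nu R$, by Taylor expanding in $\nu_k$ and bounding $|y-x_k|$ by $R$ on the relevant region. We must also check that all polynomial weights combine into at most $R^{1/2}$ in (ii) and $R^{3/2}$ in (i). Once these are verified, collecting the four leading terms with their signs from $\vec\sigma$ gives exactly the stated expansion of $\partial_yH$.
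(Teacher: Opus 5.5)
Your proposal is correct and follows essentially the paper's argument. For (i), you Taylor-expand in $\nu_k$ via \eqref{equ:Taylornu} to pass to unmodulated profiles at cost $O_{H^1}(\nu R^{\frac32}e^{-r})$, split at the midpoint, and replace the far factor by its exponential tail from \eqref{est:asymQ-}--\eqref{est:asymQ+}. For (ii), you bound the leftover monomials of $U^5$ pointwise and apply Lemma~\ref{le:boundinte}. One detail is worth stating explicitly in (ii): the modulated $Q_k$ decay at rate $(1+\nu_k)^{\frac12}$ rather than $1$. This costs at most a factor $e^{2\nu R}\lesssim 1$ by \eqref{est:xnu2}, which is why the paper routes that step through the Taylor expansion as well.
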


  \begin{proof}
      Proof of (i). Using~\eqref{equ:Taylornu}, we decompose 
      \begin{equation*}
      \begin{aligned}
          Q_{1}(y)&=Q(y-x_{1})+\frac{\nu_{1}}{2}(\Lambda Q)(y-x_{1})+O\left(\nu^{2}(y-x_{1})^{2}e^{-(1-\nu)|y-x_{1}|}\right),\\
          Q'_{1}(y)&=Q'(y-x_{1})+\frac{\nu_{1}}{2}(\Lambda Q)'(y-x_{1})+O\left(\nu^{2}(y-x_{1})^{2}e^{-(1-\nu)|y-x_{1}|}\right),\\
          Q''_{1}(y)&=Q''(y-x_{1})+\frac{\nu_{1}}{2}(\Lambda Q)''(y-x_{1})+O\left(\nu^{2}(y-x_{1})^{2}e^{-(1-\nu)|y-x_{1}|}\right).
          \end{aligned}
      \end{equation*}
      Similarly, we decompose
       \begin{equation*}
      \begin{aligned}
          Q_{2}(y)&=Q(y-x_{2})+\frac{\nu_{2}}{2}(\Lambda Q)(y-x_{2})+O\left(\nu^{2}(y-x_{2})^{2}e^{-(1-\nu)|y-x_{2}|}\right),\\
          Q'_{2}(y)&=Q'(y-x_{2})+\frac{\nu_{2}}{2}(\Lambda Q)'(y-x_{2})+O\left(\nu^{2}(y-x_{2})^{2}e^{-(1-\nu)|y-x_{2}|}\right),\\
          Q''_{2}(y)&=Q''(y-x_{2})+\frac{\nu_{2}}{2}(\Lambda Q)''(y-x_{2})+O\left(\nu^{2}(y-x_{2})^{2}e^{-(1-\nu)|y-x_{2}|}\right).
          \end{aligned}
      \end{equation*}
      It follows directly from~\eqref{est:xnu2} and Lemma~\ref{le:boundinte} that 
      \begin{equation}\label{est:Q14Q21}
      \begin{aligned}
         \partial_{y} \left(Q_{1}^{4}(y)Q_{2}(y)\right)&=
         \partial_{y}\left(Q^{4}(y-x_{1})Q(y-x_{2})\right)\\
         &+O_{H^{1}}\left(\nu R^{\frac{3}{2}}e^{-r}+\nu^{2}R^{\frac{5}{2}}e^{-r}\right).
         \end{aligned}
      \end{equation}
      Moreover, for $y\in \left(\frac{x_{1}+x_{2}}{2},\infty\right)$, from~\eqref{est:asymQ+}, we find 
      \begin{equation*}
          |\partial_{y}^{\le 2}Q(y-x_{1})|\lesssim e^{-(y-x_{1})}
          \Longrightarrow 
          \|Q^{4}(\cdot-x_{1})Q(\cdot-x_{2})\|_{H^{2}\left(\frac{x_{1}+x_{2}}{2},\infty\right)}\lesssim e^{-2r}.
      \end{equation*}
      Then, for $y\in \left(-\infty,\frac{x_{1}+x_{2}}{2}\right)$, from~\eqref{est:asymQ-}, we also find 
      \begin{equation*}
      \begin{aligned}
          Q(y-x_{2})&=c_{Q}e^{y-x_{2}}+O\left(e^{2(y-x_{2})}\right)=c_{Q}e^{-r_{1}}e^{y-x_{1}}+O\left(e^{2(y-x_{2})}\right),\\
          Q'(y-x_{2})&=c_{Q}e^{y-x_{2}}+O\left(e^{2(y-x_{2})}\right)
          =c_{Q}e^{-r_{1}}e^{y-x_{1}}+O\left(e^{2(y-x_{2})}\right),\\
           Q''(y-x_{2})&=c_{Q}e^{y-x_{2}}+O\left(e^{2(y-x_{2})}\right)
          =c_{Q}e^{-r_{1}}e^{y-x_{1}}+O\left(e^{2(y-x_{2})}\right).
          \end{aligned}
      \end{equation*}
      Therefore, for $y\in \left(-\infty,\frac{x_{1}+x_{2}}{2}\right)$, from the fact that $r_{1}=x_{2}-x_{1}$,
      \begin{equation*}
         \partial_{y} \left(Q^{4}(y-x_{1})Q(y-x_{2})\right)=c_{Q}e^{-r_{1}}\partial_{y}\left(e^{y-x_{1}}Q^{4}(y-x_{1})\right)+O_{H^{1}\left(-\infty,\frac{x_{1}+x_{2}}{2}\right)}(e^{-2r}).
      \end{equation*}
      Here, we use the fact that 
      \begin{equation*}
      \begin{aligned}
        &\int_{-\infty}^{\frac{x_{1}+x_{2}}{2}}\exp\left(-8|y-x_{1}|+4(y-x_{2})\right)\dd y\\
        &=e^{-4r_{1}}\int_{-\infty}^{\frac{r_{1}}{2}}\exp\left(-8|\sigma|+4\sigma\right)\dd \sigma=O\left(e^{-4r}\right).
        \end{aligned}
      \end{equation*}
      Combining the above estimate with~\eqref{est:Q14Q21}, we obtain 
      \begin{equation*}
      \begin{aligned}
           \partial_{y} \left(Q_{1}^{4}(y)Q_{2}(y)\right)&=c_{Q}e^{-r_{1}}\partial_{y}\left(e^{y-x_{1}}Q^{4}(y-x_{1})\right)\\
           &+O_{H^{1}}\left(\nu R^{\frac{3}{2}}e^{-r}
           +\nu^{2}R^{\frac{5}{2}}e^{-r}
           +e^{-2r}\right).
    \end{aligned}
      \end{equation*}
      Based on a similar argument as above, we also obtain 
     \begin{equation*}
      \begin{aligned}
           \partial_{y} \left(Q_{1}(y)Q_{2}^{4}(y)\right)&=c_{Q}e^{-r_{1}}\partial_{y}\left(e^{-(y-x_{2})}Q^{4}(y-x_{2})\right)\\
           &+O_{H^{1}}\left(\nu R^{\frac{3}{2}}e^{-r}
           +\nu^{2}R^{\frac{5}{2}}e^{-r}
           +e^{-2r}\right),
    \end{aligned}
      \end{equation*}
      \begin{equation*}
      \begin{aligned}
           \partial_{y} \left(Q_{2}^{4}(y)Q_{3}(y)\right)&=c_{Q}e^{-r_{2}}\partial_{y}\left(e^{y-x_{2}}Q^{4}(y-x_{2})\right)\\
           &+O_{H^{1}}\left(\nu R^{\frac{3}{2}}e^{-r}
           +\nu^{2}R^{\frac{5}{2}}e^{-r}
           +e^{-2r}\right),
    \end{aligned}
      \end{equation*}
      \begin{equation*}
      \begin{aligned}
           \partial_{y} \left(Q_{2}(y)Q_{3}^{4}(y)\right)&=c_{Q}e^{-r_{2}}\partial_{y}\left(e^{-(y-x_{3})}Q^{4}(y-x_{3})\right)\\
           &+O_{H^{1}}\left(\nu R^{\frac{3}{2}}e^{-r}
           +\nu^{2}R^{\frac{5}{2}}e^{-r}
           +e^{-2r}\right).
    \end{aligned}
      \end{equation*}
      We see that the estimate in (i) follows from the above estimates and~\eqref{est:xnu2}.

\smallskip
      Proof of (ii). From the definition of $G$ and $H$ in~\eqref{equ:defG}--\eqref{equ:defH}, we check that 
      \begin{equation*}
      \begin{aligned}
         \left|\partial_{y}^{\le 1} \left(\partial_{y}G-\partial_{y}H\right)\right|
         &\lesssim \left|\partial_{y}^{\le 2}Q_{1}\right|^{4}\left|\partial_{y}^{\le 2}Q_{3}\right|+\left|\partial_{y}^{\le 2}Q_{1}\right|\left|\partial_{y}^{\le 2}Q_{3}\right|^{4}\\
         &+\left|\partial_{y}^{\le 2}Q_{1}\right|^{3}\left|\partial_{y}^{\le 2}Q_{2}\right|^{2}+\left|\partial_{y}^{\le 2}Q_{1}\right|^{2}\left|\partial_{y}^{\le 2}Q_{2}\right|^{3}\\
         &+\left|\partial_{y}^{\le 2}Q_{2}\right|^{3}\left|\partial_{y}^{\le 2}Q_{3}\right|^{2}+\left|\partial_{y}^{\le 2}Q_{2}\right|^{2}\left|\partial_{y}^{\le 2}Q_{3}\right|^{3}.
         \end{aligned}
      \end{equation*}
Recall that, from~\eqref{equ:Taylornu}, for any $k\in [\![1,3]\!]$,
\begin{equation*}
      \begin{aligned}
          Q_{k}(y)&=Q(y-x_{k})+\frac{\nu_{k}}{2}(\Lambda Q)(y-x_{k})+O\left(\nu^{2}(y-x_{k})^{2}e^{-(1-\nu)|y-x_{k}|}\right),\\
          Q'_{k}(y)&=Q'(y-x_{k})+\frac{\nu_{k}}{2}(\Lambda Q)'(y-x_{k})+O\left(\nu^{2}(y-x_{k})^{2}e^{-(1-\nu)|y-x_{k}|}\right),\\
          Q''_{k}(y)&=Q''(y-x_{k})+\frac{\nu_{k}}{2}(\Lambda Q)''(y-x_{k})+O\left(\nu^{2}(y-x_{k})^{2}e^{-(1-\nu)|y-x_{k}|}\right).
          \end{aligned}
      \end{equation*}
      Combining the above estimates with~\eqref{est:xnu2} and Lemma~\ref{le:boundinte}, we obtain 
      \begin{equation*}
          \left\|\partial_{y}G-\partial_{y}H\right\|_{H^{1}}\lesssim R^{\frac{1}{2}}e^{-2r}+\nu R^{\frac{3}{2}}e^{-2r}+\nu^{4}R^{\frac{9}{2}}e^{-2r}.
      \end{equation*}
       We see that the estimate in (ii) follows from the above estimates and~\eqref{est:xnu2}.
      \end{proof}

    \section{Construction of the approximate solution}\label{S:ConsAPP}

\subsection{Approximate solution}\label{SS:App}
This subsection is devoted to the construction of a 3-soliton approximate solution to~\eqref{equ:gKdv2} and the derivation of the ODE system governing the evolution of the geometric parameters $\Gamma$. This system contains source terms that originate from the nonlinear interactions between the three solitons.

\smallskip
Recall that, we consider the following geometric parameters:
\begin{equation*}
    \Gamma=\left(\Gamma_{1},\Gamma_{2},\Gamma_{3}\right)\in \RR^{9}, \ \ 
    \mbox{with}\   \Gamma_{k}=\left(x_{k},\nu_{k},b_{k}\right)\in \RR^{3}\ \ \mbox{for any}\  k\in [\![1,3]\!].
\end{equation*}
We start with the definition of admissible geometrical parameters $\Gamma$.

\begin{definition}\label{def:admissible}
    Let $I=[t_{0},t_{1}]\subset (1,\infty)$ be a time interval.
    We say that the function $\Gamma(t):I\to \RR^{9}$ is admissible if it satisfies 
    \begin{equation*}
    \begin{aligned}
        |x_{1}(t)+15\log t-\beta_{1}|&\le 1,\ \ \nu_{1}\in \left(-\frac{16}{t},-\frac{14}{t}\right), \ \  \ b_{1}\in \left(\frac{7}{t^{2}},\frac{8}{t^{2}}\right),\\
     |x_{2}(t)+12\log t-\beta_{2}|&\le 1, \ \ \nu_{2}\in \left(-\frac{13}{t},-\frac{11}{t}\right),\ \ \
      b_{2}\in \left(\frac{5}{t^{2}},\frac{7}{t^{2}}\right),\\
      |x_{3}(t)+9\log t-\beta_{3}|&\le 1, \ \ \nu_{3}\in \left(-\frac{10}{t},-\frac{8}{t}\right),\ \ \ \ 
        b_{3}\in \left(\frac{4}{t^{2}},\frac{5}{t^{2}}\right).
        \end{aligned}
    \end{equation*}
\end{definition}

\begin{definition}\label{def:Sfunction}
    Let $I=[t_{0},t_{1}]\subset (1,\infty)$ be a time interval and $\Gamma$ be an admissible function. We denote by $\mathcal{S}$ the set of smooth functions $f:I\times \RR\to \RR$ such that 
    \begin{equation*}
        \|f(t)\|_{H^{1}}\lesssim \sum_{k=1}^{3}\frac{|\dot{x}_{k}|}{t^{\frac{5}{2}}}
        +
        \sum_{k=1}^{3}\frac{|\dot{\nu}_{k}|}{t^{\frac{3}{2}}}
    +\sum_{k=1}^{3}\frac{|\dot{b}_{k}|}{t^{\frac{1}{2}}}+\frac{1}{t^{\frac{7}{2}}}.
    \end{equation*}
    Moreover, for such function $f\in \mathcal{S}$, we set $f=O_{\mathcal{S}}(1)$.
\end{definition}

We denote 
\begin{equation*}
    a=\sqrt{\frac{2b_{1}}{15}}\in\left(\sqrt{\frac{14}{15}}t^{-1},\sqrt{\frac{16}{15}}t^{-1}\right)\Longrightarrow a^{-1}
    \in\left(\sqrt{\frac{15}{16}}t,\sqrt{\frac{15}{14}}t\right).
\end{equation*}
We also denote by $\chi:\RR\to [0,1]$ a non-decreasing $C^{\infty}$ function such that 
\begin{equation*}
    \chi_{|(-\infty,1)}\equiv 1 \ \ \mbox{and}\ \ 
    \chi_{|(2,\infty)}\equiv 0.
\end{equation*}
Recall that, the function $X$ is a non-localized profile which does not belong to $L^{2}$ space, and thus, we need the following suitable cut-off function:
\begin{equation}\label{equ:defphi}
    \phi(t,y)=\chi(a(t)y)\Longrightarrow 
    \phi_{|(-\infty,a^{-1})}\equiv 1\ \ \mbox{and}\ \ 
    \phi_{|(2a^{-1},\infty)}\equiv 0.
\end{equation}

\begin{lemma}\label{le:localX}
Let $\Gamma:[t_{0},t_{1}]\to \RR^{9}$ be an admissible function.
    We have
    \begin{equation*}
    \begin{aligned}
        \|X_{k}\partial^{3}_{y}\phi\|_{H^{1}}+\|(\partial_{y}X_{k})(\partial_{y}^{2}\phi)\|_{H^{1}}&\lesssim t^{-2},\\
        \|(\partial_{y}^{2}X_{k})(\partial_{y}\phi)\|_{H^{1}}+\|(\partial_{y}(U^{4}X_{k}))(\partial_{y}\phi)\|_{H^{1}}&\lesssim t^{-2}.
        \end{aligned}
    \end{equation*}
\end{lemma}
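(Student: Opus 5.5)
The estimate comes from two facts. Derivatives of $\phi$ are supported in the transition region $a^{-1}\le y\le 2a^{-1}$, where $y\sim t$. Each $y$-derivative of $\phi$ gains a factor $a\sim t^{-1}$. Concretely,
\begin{equation*}
\partial_{y}^{j}\phi(t,y)=a^{j}\chi^{(j)}(ay),
\end{equation*}
so $\|\partial_{y}^{j}\phi\|_{L^{\infty}}\lesssim t^{-j}$. Moreover, since $\|\chi^{(j)}(a\cdot)\|_{L^{2}}=a^{-1/2}\|\chi^{(j)}\|_{L^{2}}$, we get $\|\partial_{y}^{j}\phi\|_{L^{2}}\lesssim t^{-j+\frac{1}{2}}$ for $j\ge1$. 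On the support of $\partial_{y}\phi$ we have $y\in[a^{-1},2a^{-1}]\subset[ct,Ct]$. By admissibility, $|x_{k}|\lesssim\log t$ and $|\nu_{k}|\lesssim t^{-1}$, so there $y-x_{k}\ge ct/2\gg1$ and $(1+\nu_{k})^{1/2}(y-x_{k})\gtrsim t$.

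\emph{Term $X_{k}\partial_{y}^{3}\phi$.} By Corollary~\ref{coro:X}, $X$ is bounded with $X'\in\mathcal{Y}$, so $\|X_{k}\|_{W^{1,\infty}}\lesssim1$. Then
\begin{equation*}
\|X_{k}\partial_{y}^{3}\phi\|_{H^{1}}\lesssim\|X_{k}\|_{W^{1,\infty}}\bigl(\|\partial_{y}^{3}\phi\|_{L^{2}}+\|\partial_{y}^{4}\phi\|_{L^{2}}\bigr)\lesssim t^{-5/2}\le t^{-2}.
\end{equation*}
This is the only term where the non-decaying tail of $X$ (limit $-2m_{0}$) enters. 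It is harmless because three derivatives fall on $\phi$.

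\emph{Terms with a derivative on $X_{k}$.} In $(\partial_{y}X_{k})(\partial_{y}^{2}\phi)$ and $(\partial_{y}^{2}X_{k})(\partial_{y}\phi)$, at least one derivative hits $X_{k}$. On the support, by the estimate on $(X+2m_{0})^{(n)}$ in Corollary~\ref{coro:X}, $|\partial_{y}^{n}X_{k}|\lesssim t^{2}e^{-ct}$ for $n\ge1$. Each of these $H^{1}$ norms is bounded by a finite sum of terms of the form $\|\partial_{y}^{n}X_{k}\|_{L^{\infty}(\mathrm{supp})}\|\partial_{y}^{j}\phi\|_{L^{2}}$. Hence each is $O(t^{3}e^{-ct})$, far smaller than $t^{-2}$.

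\emph{Term $(\partial_{y}(U^{4}X_{k}))\partial_{y}\phi$.} Expand $\partial_{y}(U^{4}X_{k})=4U^{3}U'X_{k}+U^{4}X_{k}'$. On the support, $y-x_{j}\gtrsim t$ for every $j$, so $|\partial_{y}^{\le2}U|\lesssim e^{-ct}$ by~\eqref{est:asymQ+} and the Taylor expansion~\eqref{equ:Taylornu}. Together with $\|X_{k}\|_{W^{2,\infty}}\lesssim1$ and $\|\partial_{y}^{\le2}\phi\|_{L^{2}}\lesssim t^{1/2}$, this term is also exponentially small in $t$.

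The only point needing care is uniformity: the rescaling $(1+\nu_{k})^{1/2}$ and the shifts $x_{k}\sim\log t$ must not spoil the exponential bounds. This follows because admissibility gives $|\nu_{k}|\lesssim t^{-1}$ and $|x_{k}|\lesssim\log t\ll t\sim a^{-1}$. Everything else is routine bookkeeping, and the sharpest term is $X_{k}\partial_{y}^{3}\phi$, which already yields $t^{-5/2}$.
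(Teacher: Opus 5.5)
Your proof is correct and follows essentially the paper's argument. You localize the derivatives of $\phi$ to $y\sim t$ with $|\partial_y^j\phi|\lesssim t^{-j}$, and you use only the boundedness of $X$ for $X_k\partial_y^3\phi$, which gives $t^{-5/2}$. You then use the exponential decay of $X'$ and $Q$ there for the remaining terms, which you bound more sharply (exponentially small) than the paper's crude $t^{-2}$.

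One cosmetic point: in the last term only $\partial_y\phi$ and $\partial_y^2\phi$ enter the $H^1$ norm. So you should write $\|\partial_y\phi\|_{L^2}+\|\partial_y^2\phi\|_{L^2}$ rather than $\|\partial_y^{\le 2}\phi\|_{L^2}$, since $\phi$ itself is not in $L^2$.
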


\begin{proof}
    From the definition of $\phi$ in~\eqref{equ:defphi}, we have 
    \begin{equation*}
t|\partial_{y}\phi|+t^{2}|\partial_{y}^{2}\phi|+t^{3}|\partial_{y}^{3}\phi|+
t^{4}|\partial_{y}^{4}\phi|
\lesssim \textbf{1}_{\left[\frac{t}{2},\frac{5t}{2}\right]}(y).
    \end{equation*}
    It follows from $X\in L^{\infty}$ and $X'\in \mathcal{Y}$ that 
    \begin{equation*}
        \sum_{k=1}^{3}\left\|X_{k}\partial_{y}^{3}\phi\right\|_{H^{1}}\lesssim t^{-3}\left\|\textbf{1}_{\left[\frac{t}{2},\frac{5t}{2}\right]}\right\|_{L^{2}}\lesssim t^{-\frac{5}{2}}.
    \end{equation*}
    Then, using again $X'\in \mathcal{Y}$ and the exponential decay of $Q$, 
    \begin{equation*}
    \begin{aligned}
        &\sum_{k=1}^{3}\left\|(\partial_{y}X_{k})(\partial_{y}^{2}\phi)\right\|_{H^{1}}
        +\sum_{k=1}^{3}\left\|(\partial_{y}^{2}X_{k})(\partial_{y}\phi)\right\|_{H^{1}}\\
        &+\sum_{k=1}^{3}\left\|(\partial_{y}(U^{4}X_{k}))(\partial_{y}\phi)\right\|_{H^{1}}
        \lesssim t^{-1}\sum_{k=1}^{3}\left(\int_{\frac{t}{2}}^{\frac{5t}{2}}e^{-\frac{1}{2}|y-x_{k}|}\dd y\right)^{\frac{1}{2}}\lesssim t^{-2}.
        \end{aligned}
    \end{equation*}
    Combining the above two estimates, we complete the proof of Lemma~\ref{le:localX}.
\end{proof}

To construct the explicit approximate solution, we need the following refined term related to the non-localized profile $X$\footnote{See~\eqref{equ:defQkYkXk} and Corollary~\ref{coro:X} for the definition of $(X_{1},X_{2},X_{3})$.}:
\begin{equation}\label{equ:defV}
    V=\sum_{k=1}^{3}\sigma_{k}\theta_{k}X_{k}=\theta_{1}X_{1}-\theta_{2}X_{2}-\theta_{3}X_{3}.
\end{equation}
Here, we denote 
\begin{equation}\label{equ:defthetak}
\theta_{k}=\frac{b_{k}}{(1+\nu_{k})^{\frac{5}{2}}}
\Longleftrightarrow 
b_{k}=\theta_{k}(1+\nu_{k})^{\frac{5}{2}},\ \  \mbox{for any}\ k\in [\![1,3]\!].
\end{equation}
Moreover, we introduce a refined term related to the localized profile $Y$\footnote{See~\eqref{equ:defQkYkXk} and Lemma~\ref{le:Y} for the definition of $(Y_{1},Y_{2},Y_{3})$.}:
\begin{equation}\label{equ:defW}
\begin{aligned}
    W&=-10m_{0}\theta_{1}\left(\frac{1+\nu_{1}}{1+\nu_{2}}\right)^{\frac{1}{4}}Y_{2}\\
    &+10m_{0}\left(\theta_{2}\left(\frac{1+\nu_{2}}{1+\nu_{3}}\right)^{\frac{1}{4}}-\theta_{1}\left(\frac{1+\nu_{1}}{1+\nu_{3}}\right)^{\frac{1}{4}}\right)Y_{3}.
    \end{aligned}
\end{equation}
\begin{remark}\label{re:VW}
We mention here that, the non-localized profile $V$ is used to express the nonlinear interactions between the three solitons as well as the localized profile $W$ is used to refine the interaction between the right-hand tail of $X_{k}$ and the three solitons. See Step 2 of the proof of Proposition~\ref{prop:appPhi} for more details.
\end{remark}
We now introduce the approximate solution $S$ and the related error term:
\begin{equation}\label{equ:defS}
    S=U+V\phi+W\ \ \mbox{and}\ \ 
    \Phi(S)=\partial_{t}S+\partial_{y}\left(\partial_{y}^{2}S-S+S^{5}\right).
\end{equation}
Consider the following functions which are related to the modulation equations:
\begin{equation}\label{equ:defMod}
\begin{aligned}
    {\vec{\rm{Mod}}}_{1}&=\left(\dot{x}_{1}-\nu_{1},\frac{\dot{\nu}_{1}-2b_{1}}{2(1+\nu_{1})},\dot{b}_{1}+\alpha e^{-r_{1}}\right),\\
    {\vec{\rm{Mod}}}_{2}&=\left(\dot{x}_{2}-\nu_{2},\frac{\dot{\nu}_{2}-2b_{2}}{2(1+\nu_{2})},\dot{b}_{2}
    +\alpha e^{-r_{1}}-\alpha e^{-r_{2}}
    \right),\\
     {\vec{\rm{Mod}}}_{3}&=\left(\dot{x}_{3}-\nu_{3},\frac{\dot{\nu}_{3}-2b_{3}}{2(1+\nu_{3})},\dot{b}_{3}
    +3\alpha e^{-r_{2}}
    \right).
    \end{aligned}
\end{equation}
Moreover, we set 
\begin{equation}\label{equ:defMQ}
\vec{M}_{k}Q=\sigma_{k}\left(-\partial_{y}Q_{k},\Lambda_{k}Q_{k},Z_{k}\right)^{T},\quad \mbox{for any}\ k\in [\![1,3]\!].
\end{equation}
Here, we denote 
\begin{equation}\label{equ:defZ}
    Z_{1}=X_{1}\phi-10m_{0}(Y_{2}+Y_{3}),\ \ 
    Z_{2}=X_{2}\phi-10m_{0}Y_{3} \ \ \mbox{and}\ \ 
    Z_{3}=X_{3}\phi.
\end{equation}
We now prove that the smooth real-valued function $S:I\times \RR\to \RR$ is an approximate solution of the gKdV equation~\eqref{equ:gKdv2} in the following sense.
\begin{proposition}
    \label{prop:appPhi}
     Let $\Gamma:[t_{0},t_{1}]\to \RR^{9}$ be an admissible function.
    We have
    \begin{equation}\label{equ:defPhi}
        \Phi(S)=\sum_{k=1}^{3}{\vec{\rm{Mod}}}_{k}\cdot\vec{M}_{k}Q
        +\sum_{k=1}^{3}\Phi_{k}(S)
        +O_{\mathcal{S}}(1).
    \end{equation}
\end{proposition}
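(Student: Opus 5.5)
The plan is to expand $\Phi(S)$ term by term, following the decomposition $S=U+V\phi+W$ in~\eqref{equ:defS}. We will identify three kinds of terms. The first are the modulation terms, which collect into ${\vec{\rm{Mod}}}_{k}\cdot\vec{M}_{k}Q$. The second are the explicit leading interaction remainders, which we group as $\Phi_{k}(S)$; these are localized terms near the $k$-th soliton of size $O(t^{-3})$, of the form $\nu_k b_k$, $\theta_k^2$ and similar, to be treated later via the refined energy functional. The third are the errors bounded in $H^{1}$ by the right-hand side of Definition~\ref{def:Sfunction}.

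\textbf{Step 1: the soliton part.} For the soliton part we write
\[
\partial_t U+\partial_y(\partial_y^2U-U+U^5)=\sum_k\sigma_k\Big(\partial_tQ_k+\partial_y(\partial_y^2Q_k-Q_k+Q_k^5)\Big)+\partial_yG .
\]
Using $\partial_t Q_k=-\dot x_k\partial_yQ_k+\frac{\dot\nu_k}{2(1+\nu_k)}\Lambda_kQ_k$ and the rescaled soliton equation $\partial_y^2Q_k-(1+\nu_k)Q_k+Q_k^5=0$, this becomes $(\dot x_k-\nu_k)(-\partial_yQ_k)$ plus the $\dot\nu_k$ term. Subtracting $2b_k$ from $\dot\nu_k$ leaves a term $\frac{b_k}{1+\nu_k}\Lambda_kQ_k$. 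For $\partial_yG$ we invoke Lemma~\ref{le:asymptotic}: it equals $\partial_yH$ plus $O_{H^1}(R^{1/2}e^{-2r})$, and under admissibility $e^{-r}\sim t^{-3}$, so the remainder is acceptable. The four explicit pieces of $\partial_yH$ are then kept for Step 3.

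\textbf{Step 2: the profile $V\phi$.} We compute $\Phi$ on $\sigma_k\theta_kX_k\phi$. The linear part gives $\theta_k\,\partial_y(-\mathcal{L}_kX_k)\phi$ plus commutator terms involving $\partial_y^j\phi$. The commutators are $O(t^{-2})\cdot\theta\lesssim t^{-4}$ in $H^1$ by Lemma~\ref{le:localX}, and hence lie in $\mathcal S$. By Corollary~\ref{coro:X}, $(\mathcal{L}X)'=\Lambda Q$, and the normalization $\theta_k=b_k(1+\nu_k)^{-5/2}$ is chosen so that $-\theta_k(\mathcal{L}_kX_k)'$ cancels exactly the $\frac{b_k}{1+\nu_k}\Lambda_kQ_k$ term from Step 1. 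The time derivative $\dot\theta_kX_k\phi$ produces $\dot b_k X_k\phi$, which is the third component of $\vec M_kQ$ (through $Z_k$). The terms $\theta_k\dot x_k\partial_yX_k\phi$, $\theta_k\dot\nu_k\Lambda X_k$ and $\partial_t\phi$ are of size $t^{-3}$ or come with $\dot x,\dot\nu$; these are placed either in $\Phi_k(S)$ or, using $|X|\lesssim1$ on a support of length $t$, in $O_{\mathcal S}(1)$.

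\textbf{Step 3: cross interactions and the role of $W$.} This is the main obstacle. The nonlinear cross term $5U^4\cdot V\phi$ involves the right tail $X_k\to-2m_0$ of each earlier soliton sitting on each later soliton $Q_\ell$ with $\ell>k$. By Corollary~\ref{coro:inte}, $\partial_y\big(Q_\ell^4(X_k+2(1+\nu_k)^{1/4}m_0)\big)=O_{H^1}(R^{5/2}e^{-r})$, so the leading piece is $-10m_0\theta_k(1+\nu_k)^{1/4}\partial_y(Q_\ell^4)$. This piece is of order $t^{-2}$, which is too large for $\mathcal S$. It is cancelled by $W$: by Lemma~\ref{le:Y}, $(\mathcal{L}(1+5Y))'=0$ gives $\partial_y(\mathcal{L}_\ell Y_\ell)=\partial_y(Q_\ell^4)$ after rescaling. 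The coefficients in~\eqref{equ:defW}, including the ratios $((1+\nu_k)/(1+\nu_\ell))^{1/4}$ and the signs $\sigma$, are precisely those that kill it, and the $\dot\theta$ parts of $W$ furnish the $-10m_0Y_\ell$ pieces of $Z_k$.

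It remains to show that the four explicit terms of $\partial_yH$ from Lemma~\ref{le:asymptotic}(i), which are of order $t^{-3}$ and localized at the solitons, split into a projection onto the $Z_k$/$X_k$ directions plus a remainder. The projection produces exactly $\alpha e^{-r_1}$, $\alpha e^{-r_1}-\alpha e^{-r_2}$ and $3\alpha e^{-r_2}$ in ${\vec{\rm{Mod}}}_k$. This will be verified by pairing with $Q_k$ and using $(X,Q)=-m_0^2$, $(Y,Q)=-\frac35m_0$ and the definition of $\alpha$; the coefficient $3$ is where the sign choice $(1,-1,-1)$ and the cross contributions $(Q_3,X_1)$ and $(Q_3,X_2)$ from Corollary~\ref{coro:inte} enter. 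The orthogonal remainder is assigned to $\Phi_k(S)$.

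\textbf{Step 4: the remaining terms.} All other terms are bounded by $t^{-7/2}$ or by the $\dot x,\dot\nu,\dot b$ terms. These are the quadratic terms in $V\phi,W$ inside $S^5$, the terms with $\nu^2$ from~\eqref{equ:Taylornu}, and the $W$-linear error terms, which are of size $\theta\nu\lesssim t^{-3}$ times localized functions. For this we use Lemma~\ref{le:boundinte} and $R\lesssim\log t$.
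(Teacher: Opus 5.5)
Your expansion of $\Phi(S)$ along $S=U+V\phi+W$ is the same as the paper's. You also correctly isolate the two structural cancellations:
\begin{itemize}
\item the normalization $\theta_k=b_k(1+\nu_k)^{-5/2}$ removes $\sigma_k\frac{b_k}{1+\nu_k}\Lambda_kQ_k$;
\item $W$ removes the $O(t^{-2})$ tail interactions $-10m_0\theta_k(1+\nu_k)^{1/4}\partial_y(Q_\ell^4)$ for $\ell>k$.
\end{itemize}

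\textbf{The gap is in what you call $\Phi_k(S)$.} In the statement these are the explicit functions displayed right after the proposition. The content of the proposition is that the leftover equals exactly them, modulo $O_{\mathcal S}(1)$. Your description of them as localized terms ``of the form $\nu_kb_k$, $\theta_k^2$'' does not match. Your bookkeeping either omits or sends to $O_{\mathcal S}(1)$ several terms that are not in $\mathcal S$. It also leaves undecided (``either in $\Phi_k$ or in $O_{\mathcal S}(1)$'') precisely the terms that make up $\Phi_3$. Concretely:
\begin{enumerate}
\item[(a)] Among the $\phi$-commutators is the zeroth-order term $-\sigma_k\theta_kX_k\partial_y\phi$, which Lemma~\ref{le:localX} does not cover. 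Since $X_k\approx-2m_0$ on the support of $\partial_y\phi$ (length $\sim t$, height $\sim t^{-1}$), its $L^2$ norm is $\sim t^{-5/2}$, not $\lesssim t^{-7/2}$. Similarly, under admissibility alone one only has $|\partial_t\phi|\lesssim t^{2}|\dot b_1|$. So $\sigma_k\theta_kX_k\partial_t\phi$ has size $t^{1/2}|\dot b_1|$, while $\mathcal S$ allows only $t^{-1/2}|\dot b_1|$.
\item[(b)] The linear part of the equation contains $-X_k$, not $-(1+\nu_k)X_k$, which produces $\sigma_k\theta_k\nu_k\partial_yX_k$. From $W$ one likewise gets $-10m_0b_1\nu_2\partial_yY_2$ and $-10m_0(b_1-b_2)\nu_3\partial_yY_3$. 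These are of size $t^{-3}$ in $H^1$, so your Step~4 claim that the ``$\theta\nu$'' terms are $\lesssim t^{-7/2}$ is false.
\item[(c)] The transport terms $-\sigma_k\dot x_kb_k\partial_yX_k$, $10m_0\dot x_2b_1\partial_yY_2$ and $10m_0\dot x_3(b_1-b_2)\partial_yY_3$, coming from $\partial_t(V\phi)$ and $\partial_tW$, have size $t^{-2}|\dot x_k|$. This is also outside $\mathcal S$, which needs $t^{-5/2}|\dot x_k|$.
\end{enumerate}
The missing step is to pair all of these into the combinations $(\dot x_k-\nu_k)$ and $(\partial_t\phi-\partial_y\phi)$. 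That pairing is exactly $\Phi_3(S)$, and it is what later allows Lemma~\ref{le:para1} to be used on them.

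\textbf{Your last paragraph of Step~3 is not what the proposition requires.} No projection onto the $Z_k$ and no pairing with $Q_\ell$ enters the identity, because the constants in $\vec{\mathrm{Mod}}_k$ are fixed by \eqref{equ:defMod}. One simply writes $\sigma_k\dot b_kZ_k=\sigma_k(\dot b_k+c_k)Z_k-\sigma_kc_kZ_k$, with
\[
c_1=\alpha e^{-r_1},\qquad c_2=\alpha e^{-r_1}-\alpha e^{-r_2},\qquad c_3=3\alpha e^{-r_2}.
\]
Then $-\sum_k\sigma_kc_kZ_k$, together with the four explicit terms of $\partial_yH$ from Lemma~\ref{le:asymptotic}, is exactly $\Phi_1(S)+\Phi_2(S)$. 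To see this, note that the $e^{-r_1}Y_3$ contributions of $c_1Z_1$ and $c_2Z_2$ cancel. The result includes the non-localized pieces $\alpha e^{-r_j}X_k\phi$; for instance $\Phi_2$ has a tail of height $\approx-4\alpha m_0e^{-r_2}$ up to $y\sim t$, so the $\Phi_k$ are not localized near the solitons.

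The almost-orthogonality of $\Phi_1,\Phi_2$ to $Q_\ell$, where the signs $(1,-1,-1)$ and the coefficient $3$ matter, is the separate Lemma~\ref{le:PhiTech}(iii). Carrying it out here does not establish the identity with the stated $\Phi_k$. Finally, the $\theta_k^2$-type terms (quadratic in $V\phi+W$ inside $S^5$) are $O(t^{-4})$ and belong to $O_{\mathcal S}(1)$, not to $\Phi_k$.
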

Here, we denote 
\begin{equation*}
    \begin{aligned}
        \Phi_{1}(S)&=e^{-r_{1}}\left(
        5c_{Q}\partial_{y}
        \left(
        e^{-(y-x_{2})}Q^{4}(y-x_{2})
        \right)
        -\alpha X_{1}\phi 
        \right)\\
        &+e^{-r_{1}}\left(
         -5c_{Q}\partial_{y}
        \left(
        e^{y-x_{1}}Q^{4}(y-x_{1})
        \right)
          +\alpha X_{2}\phi+10\alpha m_{0} Y_{2}
        \right),\quad
    \end{aligned}
\end{equation*}

\begin{equation*}
    \begin{aligned}
        \Phi_{2}(S)
        &=e^{-r_{2}}
        \left(
 -5c_{Q}\partial_{y}\left(e^{y-x_{2}}Q^{4}(y-x_{2})\right)
 +3\alpha X_{3}\phi
 \right)\\
 &+e^{-r_{2}}\left(
        -5c_{Q}\partial_{y}\left(e^{-(y-x_{3})}Q^{4}(y-x_{3})\right)
        -\alpha X_{2}\phi+10\alpha m_{0}Y_{3}
        \right),\\
  \Phi_{3}(S)&=
  -\sum_{k=1}^{3}\sigma_{k}b_{k}(\dot{x}_{k}-\nu_{k})\partial_{y}X_{k}+10m_{0}b_{1}(\dot{x}_{2}-\nu_{2})\partial_{y}Y_{2}\\
  &+\sum_{k=1}^{3}\sigma_{k}\theta_{k}X_{k}(\partial_{t}\phi-\partial_{y}\phi)+10m_{0}(\dot{x}_{3}-\nu_{3})(b_{1}-b_{2})\partial_{y}Y_{3}.
        \end{aligned}
\end{equation*}

\begin{proof}
    \textbf{Step 1}. General computation. From the definition of $\Theta_{k}$ in~\eqref{def:T}, for any smooth real-valued function $f:\RR\to \RR$, we compute
    \begin{equation}\label{equ:ptTf}
        \partial_{t}\left(\Theta_{k}f\right)=-\dot{x}_{k}\partial_{y}(\Theta_{k}f)+\frac{\dot{\nu}_{k}}{2(1+\nu_{k})}\Theta_{k}\left(\Lambda f\right).
    \end{equation}
    It follows from the definition of $U$ in~\eqref{equ:defG} that 
    \begin{equation}\label{est:ptU}
        \partial_{t}U=-\sum_{k=1}^{3}\sigma_{k}\dot{x}_{k}\partial_{y}Q_{k}
        +\sum_{k=1}^{3}\frac{\sigma_{k}\dot{\nu}_{k}}{2(1+\nu_{k})}\Theta_{k}(\Lambda Q).
    \end{equation}
    Similarly, from the definition of $V$ in~\eqref{equ:defV}, we compute 
    \begin{equation*}
    \begin{aligned}
        \partial_{t}(V\phi)&=\sum_{k=1}^{3}\sigma_{k}\dot{\theta}_{k}X_{k}\phi
        -\sum_{k=1}^{3}\sigma_{k}\dot{x}_{k}\theta_{k}\left(\partial_{y}X_{k}\right)\phi\\
        &+\sum_{k=1}^{3}\sigma_{k}\theta_{k}X_{k}\partial_{t}\phi
        +\sum_{k=1}^{3}\frac{\sigma_{k}\theta_{k}\dot{\nu}_{k}}{2(1+\nu_{k})}\left(\Theta_{k}\left(\Lambda X\right)\right)\phi.
        \end{aligned}
    \end{equation*}
    Using the definition of $\theta_{k}$ in~\eqref{equ:defthetak} and Definition~\ref{def:admissible}, for any $k\in [\![1,3]\!]$,
    \begin{equation}\label{est:dttheta}
    \begin{aligned}
    \theta_{k}=b_{k}+O\left(t^{-3}\right)\ \ \mbox{and}\ \ 
        \dot{\theta}_{k}=\dot{b}_{k}+O\left(t^{-2}|\dot{\nu}_{k}|+t^{-1}|\dot{b}_{k}|\right).
        \end{aligned}
    \end{equation}
    It follows from $X'\in \mathcal{Y}$ and Definition~\ref{def:admissible}--\ref{def:Sfunction} that 
   \begin{equation}\label{est:ptVphi}
    \begin{aligned}
        \partial_{t}(V\phi)&=\sum_{k=1}^{3}\sigma_{k}\dot{b}_{k}X_{k}\phi
       +\sum_{k=1}^{3}\sigma_{k}\theta_{k}X_{k}\partial_{t}\phi\\
       & -\sum_{k=1}^{3}\sigma_{k}\dot{x}_{k}b_{k}\partial_{y}X_{k}
       +O_{\mathcal{S}}(1).
        \end{aligned}
    \end{equation}
    Here, we use the fact that 
    \begin{equation*}
       \sum_{k=1}^{3}\|X_{k}\phi\|_{H^{1}}+
        \sum_{k=1}^{3}\|(\Theta_{k}(\Lambda X))\phi\|_{H^{1}}\lesssim
        \left(
        \int_{\RR} \textbf{1}_{[\frac{t}{2},\frac{5t}{2}]}(y)\dd y
        \right)^{\frac{1}{2}}\lesssim t^{\frac{1}{2}}.
    \end{equation*}
   On the other hand, using again~\eqref{equ:ptTf},~\eqref{est:dttheta} and Definition~\ref{def:admissible}--\ref{def:Sfunction}, we find 
    \begin{equation}\label{est:dtW}
    \begin{aligned}
        \partial_{t}W&=10m_{0}\big(\dot{b}_{2}
        -\dot{b}_{1}\big)Y_{3}
        +10m_{0}\dot{x}_{3}(b_{1}-b_{2})\partial_{y}Y_{3}\\
        &-10m_{0}\dot{b}_{1}Y_{2}+10m_{0}\dot{x}_{2}b_{1}\partial_{y}Y_{2}
        +O_{\mathcal{S}}(1).
        \end{aligned}
    \end{equation}
    Here, we use the fact that 
    \begin{equation*}
        \frac{\dd}{\dd t}\left(\theta_{1}\left(\frac{1+\nu_{1}}{1+\nu_{2}}\right)^{\frac{1}{4}}\right)=\dot{b}_{1}+O\left(\frac{|\dot{\nu}_{1}|}{t^{2}}
        +\frac{|\dot{\nu}_{2}|}{t^{2}}
        +\frac{|\dot{b}_{1}|}{t}\right),
    \end{equation*}
    \begin{equation*}
         \frac{\dd}{\dd t}\left(\theta_{1}\left(\frac{1+\nu_{1}}{1+\nu_{3}}\right)^{\frac{1}{4}}\right)=\dot{b}_{1}+O\left(\frac{|\dot{\nu}_{1}|}{t^{2}}+\frac{|\dot{\nu}_{3}|}{t^{2}}+\frac{|\dot{b}_{1}|}{t}\right),
    \end{equation*}
      \begin{equation*}
         \frac{\dd}{\dd t}\left(\theta_{2}\left(\frac{1+\nu_{2}}{1+\nu_{3}}\right)^{\frac{1}{4}}\right)=\dot{b}_{2}+O\left(\frac{|\dot{\nu}_{2}|}{t^{2}}+\frac{|\dot{\nu}_{3}|}{t^{2}}+\frac{|\dot{b}_{2}|}{t}\right).
    \end{equation*}
    Combining~\eqref{est:ptVphi}--\eqref{est:dtW} with~\eqref{est:ptU}, we obtain 
    \begin{equation}\label{equ:dtS}
    \begin{aligned}
\partial_{t}S&=-\sum_{k=1}^{3}\sigma_{k}\dot{x}_{k}\partial_{y}Q_{k}
+\sum_{k=1}^{3}\frac{\sigma_{k}\dot{\nu}_{k}}{2(1+\nu_{k})}\Theta_{k}(\Lambda Q)
-10m_{0}\dot{b}_{1}Y_{2}
\\
&+\sum_{k=1}^{3}\sigma_{k}\theta_{k}X_{k}\partial_{t}\phi
-\sum_{k=1}^{3}\sigma_{k}\dot{x}_{k}b_{k}\partial_{y}X_{k}
+10m_{0}(\dot{b}_{2}-\dot{b}_{1})Y_{3}\\
&+\sum_{k=1}^{3}\sigma_{k}\dot{b}_{k}X_{k}\phi
+10m_{0}\dot{x}_{3}(b_{1}-b_{2})\partial_{y}Y_{3}+10m_{0}\dot{x}_{2}b_{1}\partial_{y}Y_{2}
+O_{\mathcal{S}}(1).
\end{aligned}
    \end{equation}
    On the other hand, from~\eqref{equ:defQ} and~\eqref{def:T}, we compute 
    \begin{equation*}
        -\partial_{y}^{2}Q_{k}+(1+\nu_{k})Q_{k}-Q_{k}^{5}=0,\quad \mbox{for any}\ k\in [\![1,3]\!].
    \end{equation*}
It follows directly from~\eqref{equ:defS} that 
\begin{equation}\label{est:pyS}
    \partial_{y}\left(
    \partial_{y}^{2}S-S+S^{5}
\right)=\sum_{k=1}^{3}\sigma_{k}\nu_{k}\partial_{y}Q_{k}+\mathcal{H}_{1}+\mathcal{H}_{2}.
\end{equation}
Here, we denote 
\begin{equation*}
    \begin{aligned}
     \mathcal{H}_{1}&=\partial_{y}\left(
        \partial_{y}^{2}(V\phi+W)-(V\phi+W)+5U^{4}(V\phi+W)
        \right)+\partial_{y}H,\\
        \mathcal{H}_{2}&=\partial_{y}\left((U+V\phi+W)^{5}-U^{5}-5U^{4}(V\phi+W)\right)+\partial_{y}G-\partial_{y}H.
    \end{aligned}
\end{equation*}

\smallskip
\textbf{Step 2.} Estimate on $\mathcal{H}_{1}$. We claim that 
\begin{equation}\label{est:H1}
\begin{aligned}
    \mathcal{H}_{1}&=
    5c_{Q}e^{-r_{1}}\partial_{y}\big(e^{-(y-x_{2})}Q^{4}(y-x_{2})\big)
    -\sum_{k=1}^{3}\frac{\sigma_{k}b_{k}}{1+\nu_{k}}\Theta_{k}(\Lambda Q)
    \\
    &-5c_{Q}e^{-r_{1}}\partial_{y}\left(e^{y-x_{1}}Q^{4}(y-x_{1})\right)
    -10m_{0}\sum_{k=1}^{2}\sigma_{k}b_{k}\nu_{3}\partial_{y}Y_{3}
\\
&-5c_{Q}e^{-r_{2}}\partial_{y}\big(e^{-(y-x_{3})}Q^{4}(y-x_{3})\big)-\sum_{k=1}^{3}\sigma_{k}\theta_{k}X_{k}\partial_{y}\phi+O_{\mathcal{S}}(1)\\
    &-5c_{Q}e^{-r_{2}}\partial_{y}\left(e^{y-x_{2}}Q^{4}(y-x_{2})\right)
+\sum_{k=1}^{3}\sigma_{k}b_{k}\nu_{k}\partial_{y}X_{k}-10m_{0}b_{1}\nu_{2}\partial_{y}Y_{2}.
\end{aligned}
\end{equation}
Indeed, from the definition of $V$ in~\eqref{equ:defV}, we decompose
\begin{equation}\label{equ:pyVphi}
\begin{aligned}
&\partial_{y}\left(
\partial_{y}^{2}(V\phi)-V\phi+5U^{4}(V\phi)
\right)\\
&=\sum_{k=1}^{3}\sigma_{k}\theta_{k}\left(
\partial_{y}\left(
\partial_{y}^{2}X_{k}-X_{k}+5U^{4}X_{k}
\right)
\right)\phi
+\sum_{k=1}^{3}\sigma_{k}\theta_{k}X_{k}\partial_{y}^{3}\phi
\\
&+\sum_{k=1}^{3}\sigma_{k}\theta_{k}\left(3\partial_{y}^{2}X_{k}-X_{k}+5U^{4}X_{k}\right)\partial_{y}\phi+3\sum_{k=1}^{3}\sigma_{k}\theta_{k}(\partial_{y}X_{k})\partial_{y}^{2}\phi.
\end{aligned}
\end{equation}
Using Corollary~\ref{coro:X}, for any $k\in [\![1,3]\!]$, we compute
\begin{equation*}
\begin{aligned}
    \partial_{y}\left(
\partial_{y}^{2}X_{k}-X_{k}+5U^{4}X_{k}\right)&=-(1+\nu_{k})^{\frac{3}{2}}\Theta_{k}(\Lambda Q)+\nu_{k}\partial_{y}X_{k}\\
&+5\partial_{y}\Big(\Big(U^{4}-\sum_{j=1}^{3}Q_{j}^{4}\Big)X_{k}\Big)+5\partial_{y}\sum_{\ell\ne k}Q_{\ell}^{4}X_{k}.
\end{aligned}
\end{equation*}
Moreover, from Lemma~\ref{le:boundinte},~Corollary~\ref{coro:inte} and Definition~\ref{def:admissible},
\begin{equation*}
\begin{aligned}
  &5\partial_{y}\Big(\Big(U^{4}-\sum_{j=1}^{3}Q_{j}^{4}\Big)X_{k}\Big)+5\partial_{y}\sum_{\ell\ne k}Q_{\ell}^{4}X_{k}\\
   &=-10m_{0}(1+\nu_{k})^{\frac{1}{4}}\partial_{y}\sum_{\ell=k+1}^{3}Q_{\ell}^{4}+O_{H^{1}}\left(t^{-\frac{5}{2}}\right).
    \end{aligned}
\end{equation*}
It follows from Definition~\ref{def:admissible}--\ref{def:Sfunction} and $(Q,X')\in \mathcal{Y}\times\mathcal{Y}$ that 
\begin{equation*}
\begin{aligned}
   & \sum_{k=1}^{3}\sigma_{k}\theta_{k}\left(
\partial_{y}\left(
\partial_{y}^{2}X_{k}-X_{k}+5U^{4}X_{k}
\right)
\right)\phi\\
&=-\sum_{k=1}^{3}\sigma_{k}\theta_{k}(1+\nu_{k})^{\frac{3}{2}}\Theta_{k}(\Lambda Q)+10m_{0}\theta_{2}(1+\nu_{2})^{\frac{1}{4}}\partial_{y}(Q_{3}^{4})\\
&+\sum_{k=1}^{3}\sigma_{k}\theta_{k}\nu_{k}\partial_{y}X_{k}-10m_{0}\theta_{1}(1+\nu_{1})^{\frac{1}{4}}\partial_{y}\big(Q_{2}^{4}+Q_{3}^{4}\big)+O_{\mathcal{S}}(1).
\end{aligned}
\end{equation*}
Then, using again Definition~\ref{def:admissible}--\ref{def:Sfunction} and Lemma~\ref{le:localX}, we find 
\begin{equation*}
    \begin{aligned}
      \sum_{k=1}^{3}
\sigma_{k}\theta_{k}\left(3\partial_{y}^{2}X_{k}+5U^{4}X_{k}\right)\partial_{y}\phi&=O_{\mathcal{S}}(1),\\
        \sum_{k=1}^{3}\sigma_{k}\theta_{k}X_{k}\partial_{y}^{3}\phi
      +3\sum_{k=1}^{3}\sigma_{k}\theta_{k}(\partial_{y}X_{k})\partial_{y}^{2}\phi&=O_{\mathcal{S}}(1).
    \end{aligned}
\end{equation*}
Combining the above estimates with~\eqref{equ:defthetak} and~\eqref{equ:pyVphi}, we obtain 
\begin{equation}\label{est:pyV}
\begin{aligned}
    &\partial_{y}\left(
\partial_{y}^{2}(V\phi)-V\phi+5U^{4}(V\phi)
\right)\\
&=\sum_{k=1}^{3}\sigma_{k}b_{k}\nu_{k}\partial_{y}X_{k}
+10m_{0}\theta_{2}(1+\nu_{2})^{\frac{1}{4}}\partial_{y}(Q_{3}^{4})
-\sum_{k=1}^{3}\sigma_{k}\theta_{k}X_{k}\partial_{y}\phi\\
&-\sum_{k=1}^{3}\frac{\sigma_{k}b_{k}}{1+\nu_{k}}\Theta_{k}(\Lambda Q)
-10m_{0}\theta_{1}(1+\nu_{1})^{\frac{1}{4}}\partial_{y}\big(Q_{2}^{4}+Q_{3}^{4}\big)
+O_{\mathcal{S}}(1).
\end{aligned}
\end{equation}
On the other hand, from the definition of $W$ in~\eqref{equ:defW}, we decompose 
\begin{equation*}
\begin{aligned}
    &\partial_{y}\left(
\partial_{y}^{2}W-W+5U^{4}W
\right)\\
&=-10m_{0}\theta_{1}\left(\frac{1+\nu_{1}}{1+\nu_{2}}\right)^{\frac{1}{4}}\partial_{y}\left(\partial_{y}^{2}Y_{2}-Y_{2}+5U^{4}Y_{2}\right)\\
&-10m_{0}\sum_{k=1}^{2}\sigma_{k}\theta_{k}\left(\frac{1+\nu_{k}}{1+\nu_{3}}\right)^{\frac{1}{4}}\partial_{y}\left(\partial_{y}^{2}Y_{3}-Y_{3}+5U^{4}Y_{3}\right).
\end{aligned}
\end{equation*}
Note that, from Lemma~\ref{le:Y}, Lemma~\ref{le:boundinte} and Definition~\ref{def:admissible},
\begin{equation*}
    \begin{aligned}
       \partial_{y} \left(\partial_{y}^{2}Y_{2}-Y_{2}+5U^{4}Y_{2}\right)&=\nu_{2}\partial_{y}Y_{2}-(1+\nu_{2})^{\frac{1}{4}}\partial_{y}\left(Q_{2}^{4}\right)+O_{H^{1}}\left(t^{-\frac{5}{2}}\right),\\
       \partial_{y} \left(\partial_{y}^{3}Y_{2}-Y_{3}+5U^{4}Y_{3}\right)&
        =\nu_{3}\partial_{y}Y_{3}-(1+\nu_{3})^{\frac{1}{4}}\partial_{y}\left(Q_{3}^{4}\right)+O_{H^{1}}\left(t^{-\frac{5}{2}}\right).
    \end{aligned}
\end{equation*}
It follows from~\eqref{est:dttheta} and Definition~\ref{def:admissible}--\ref{def:Sfunction} that 
\begin{equation}\label{est:pyW}
\begin{aligned}
    &\partial_{y}\left(
\partial_{y}^{2}W-W+5U^{4}W
\right)\\
&=-10m_{0}b_{1}\nu_{2}\partial_{y}Y_{2}+10m_{0}\theta_{1}(1+\nu_{1})^{\frac{1}{4}}\partial_{y}\left(Q_{2}^{4}\right)+O_{\mathcal{S}}(1)\\
&-10m_{0}\sum_{k=1}^{2}\sigma_{k}b_{k}\nu_{3}\partial_{y}Y_{3}+10m_{0}\sum_{k=1}^{2}\sigma_{k}\theta_{k}(1+\nu_{k})^{\frac{1}{4}}\partial_{y}\left(Q_{3}^{4}\right).
\end{aligned}
\end{equation}
Last, from (i) of Lemma~\ref{le:asymptotic} and Definition~\ref{def:admissible}--\ref{def:Sfunction}, we obtain 
\begin{equation*}
\begin{aligned}
    \partial_{y}H&=5c_{Q}\sum_{k=1}^{2}\sigma_{k}e^{-r_{k}}\partial_{y}\left(e^{-(y-x_{k+1})}Q^{4}(y-x_{k+1})\right)\\
         &-5c_{Q}\sum_{k=1}^{2}e^{-r_{k}}\partial_{y}\left(e^{y-x_{k}}Q^{4}(y-x_{k})\right)+O_{\mathcal{S}}(1).
    \end{aligned}
\end{equation*}
We see that~\eqref{est:H1} follows from~\eqref{est:pyV}--\eqref{est:pyW} and the above estimate.

\smallskip
\textbf{Step 3.} Estimate on $\mathcal{H}_{2}$. We claim that 
\begin{equation}\label{est:H2}
    \mathcal{H}_{2}=O_{\mathcal{S}}(1).
\end{equation}
Indeed, by an elementary computation, we find 
\begin{equation*}
\begin{aligned}
    S^{5}-U^{5}-5U^{4}(V\phi+W)
    &=10U^{3}(V\phi+W)^{2}+5U(V\phi+W)^{4}\\
    &+10U^{2}(V\phi+W)^{3}
    +(V\phi+W)^{5},
    \end{aligned}
\end{equation*}
which implies that 
\begin{equation*}
\begin{aligned}
   & \left|\partial_{y}\left(S^{5}-U^{5}-5U^{4}(V\phi+W)\right)\right|
   +\left|\partial^{2}_{y}\left(S^{5}-U^{5}-5U^{4}(V\phi+W)\right)\right|\\
   &\lesssim 
   |\partial_{y}^{\le 2}U|^{3}\left(|\partial_{y}^{\le 2}(V\phi)|^{2}+|\partial_{y}^{\le 2}W|^{2}\right)+|\partial_{y}^{\le 2}(V\phi)|^{5}+|\partial_{y}^{\le 2}W|^{5}.
    \end{aligned}
\end{equation*}
Based on the above estimate and Definition~\ref{def:admissible}, we directly have 
\begin{equation*}
\begin{aligned}
   &\left\|\partial_{y} \left((U+V\phi+W)^{5}-U^{5}-5U^{4}(V\phi+W)\right)\right\|_{H^{1}}\\
   &\lesssim \left\|\partial_{y}^{\le 2}(V\phi)\right\|_{L^{\infty}}^{4}
   \left\|\partial_{y}^{\le 2}(V\phi)\right\|_{L^{2}}+ 
   \left\|\partial_{y}^{\le 2}W\right\|_{L^{\infty}}^{4}
   \left\|\partial_{y}^{\le 2}W\right\|_{L^{2}}\\
   &+ \left\|\partial_{y}^{\le 2}(V\phi)\right\|_{L^{\infty}}^{2}\left\|\partial_{y}^{\le 2}U\right\|_{L^{6}}^{3}+\left\|\partial_{y}^{\le 2}W\right\|_{L^{\infty}}^{2}\left\|\partial_{y}^{\le 2}U\right\|_{L^{6}}^{3} 
   \lesssim
   t^{-4}.
   \end{aligned}
\end{equation*}
On the other hand, using again Lemma~\ref{le:asymptotic} and Definition~\ref{def:admissible},
\begin{equation*}
    \|\partial_{y}G-\partial_{y}H\|_{H^{1}}\lesssim R^{\frac{1}{2}}e^{-2r}\lesssim t^{-6}\log^{\frac{1}{2}} t\lesssim t^{-4}.
\end{equation*}
We see that~\eqref{est:H2} follows from the above two estimates and Definition~\ref{def:Sfunction}.

\smallskip
\textbf{Step 4.} Conclusion. From~\eqref{equ:dtS},~\eqref{est:pyS},~\eqref{est:H1} and~\eqref{est:H2}, we have 
\begin{equation*}
\begin{aligned}
    \Phi(S)&=-\sum_{k=1}^{3}\sigma_{k}b_{k}(\dot{x}_{k}-\nu_{k})\partial_{y}X_{k}+10m_{0}b_{1}(\dot{x}_{2}-\nu_{2})\partial_{y}Y_{2}\\
     &-\sum_{k=1}^{3}\sigma_{k}(\dot{x}_{k}-\nu_{k})\partial_{y}Q_{k}
    +\sum_{k=1}^{3}\frac{\sigma_{k}(\dot{\nu}_{k}-2b_{k})}{2(1+\nu_{k})}\Theta_{k}(\Lambda Q)\\
   &+\sum_{k=1}^{3}\sigma_{k}\theta_{k}X_{k}(\partial_{t}\phi-\partial_{y}\phi)
   +10m_{0}(\dot{x}_{3}-\nu_{3})(b_{1}-b_{2})\partial_{y}Y_{3}
   \\
   &+5c_{Q}e^{-r_{1}}\partial_{y}\left(e^{-(y-x_{2})}Q^{4}(y-x_{2})\right)
   -5c_{Q}e^{-r_{1}}\partial_{y}\left(e^{y-x_{1}}Q^{4}(y-x_{1})\right)\\
   &-5c_{Q}e^{-r_{2}}\partial_{y}\left(e^{-(y-x_{3})}Q^{4}(y-x_{3})\right)
        -5c_{Q}e^{-r_{2}}\partial_{y}\left(e^{y-x_{2}}Q^{4}(y-x_{2})\right)\\
    &+\dot{b}_{1}\left(X_{1}\phi-10m_{0}(Y_{2}+Y_{3})\right)-\dot{b}_{2}\left(X_{2}\phi-10m_{0}Y_{3}\right)-\dot{b}_{3}X_{3}\phi+O_{\mathcal{S}}(1).
    \end{aligned}
\end{equation*}
We see that~\eqref{equ:defPhi} follows directly from the above identity and~\eqref{equ:defMod}--\eqref{equ:defMQ}.
\end{proof}

Last, we introduce some technical estimates to be used in the study of the evolution for geometric parameters $\Gamma=(\Gamma_{1},\Gamma_{2},\Gamma_{3})\in \RR^{9}$.

\begin{lemma}
    The following estimates hold true.
    \begin{enumerate}\label{le:PhiTech}
        \item \emph{Estimate related to $\partial_{y}Q_{k}$}. We have 
        \begin{equation*}
           \sum_{k=1}^{3} \left|(\Phi_{1}(S),\partial_{y}Q_{k})\right|
            +\sum_{k=1}^{3} \left|(\Phi_{2}(S),\partial_{y}Q_{k})\right|
            \lesssim t^{-3}.
        \end{equation*}
        In addition, we have 
        \begin{equation*}
            \sum_{k=1}^{3} \left|(\Phi_{3}(S),\partial_{y}Q_{k})\right|\lesssim t^{-3}+t^{-2}\sum_{k=1}^{3}|\dot{x}_{k}-\nu_{k}|+t^{-2}\sum_{k=1}^{3}|\dot{b}_{k}|.
        \end{equation*}
          \item \emph{Estimate related to $\Lambda_{k}Q_{k}$}. We have 
        \begin{equation*}
           \sum_{k=1}^{3} \left|(\Phi_{1}(S),\Lambda_{k}Q_{k})\right|
            +\sum_{k=1}^{3} \left|(\Phi_{2}(S),\Lambda_{k}Q_{k})\right|
            \lesssim t^{-3}.
        \end{equation*}
        In addition, we have 
        \begin{equation*}
            \sum_{k=1}^{3} \left|(\Phi_{3}(S),\Lambda_{k}Q_{k})\right|\lesssim t^{-3}+t^{-2}\sum_{k=1}^{3}|\dot{x}_{k}-\nu_{k}|+t^{-2}\sum_{k=1}^{3}|\dot{b}_{k}|.
        \end{equation*}
        \item \emph{Estimate related to $Q_{k}$}. We have 
        \begin{equation*}
           \sum_{k=1}^{3} \left|(\Phi_{1}(S),Q_{k})\right|
            +\sum_{k=1}^{3} \left|(\Phi_{2}(S),Q_{k})\right|
           \lesssim t^{-4}.
        \end{equation*}
         In addition, we have 
        \begin{equation*}
            \sum_{k=1}^{3} \left|(\Phi_{3}(S),Q_{k})\right|\lesssim t^{-4}+t^{-3}\sum_{k=1}^{3}|\dot{x}_{k}-\nu_{k}|+t^{-3}\sum_{k=1}^{3}|\dot{b}_{k}|.
        \end{equation*}
    \end{enumerate}
\end{lemma}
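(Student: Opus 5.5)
The plan is to estimate each pairing term by term, using the admissible bounds of Definition~\ref{def:admissible}: $e^{-r_1}\sim t^{-3}$, $e^{-r_2}\sim t^{-3}$, $|\nu_k|\lesssim t^{-1}$, $|b_k|\lesssim t^{-2}$, $R\lesssim\log t$. Three tools do all the work. First, Lemma~\ref{le:boundinte} and Corollary~\ref{coro:inte} show that every cross pairing of a profile centred at $x_j$ with $Q_k$, $\partial_yQ_k$ or $\Lambda_kQ_k$ ($j\neq k$) costs a factor $R^3e^{-r}\lesssim t^{-3}\log^3 t$. The only exceptions are the right tails of $X_j$ paired against solitons to the right, where $X_j\approx -2(1+\nu_j)^{1/4}m_0$. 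Second, $\phi\equiv1$ on $(-\infty,a^{-1})$, so on the support of the $Q_k$ (where $|x_k|\lesssim\log t$) we may replace $X_k\phi$ by $X_k$ up to errors of order $e^{-t/2}$. Third, we use the identities of Proposition~\ref{prop:L}, Lemma~\ref{le:Y} and Corollary~\ref{coro:X}. Each pairing is first computed at $\nu_k=0$, and the scaling corrections are then controlled through~\eqref{equ:Taylornu}; these corrections cost an extra factor $\nu\lesssim t^{-1}$.

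\emph{Parts (i) and (ii).} Every term of $\Phi_1(S)$ and $\Phi_2(S)$ is $e^{-r_j}$ times a function whose pairing with $\partial_yQ_k$ or $\Lambda_kQ_k$ is $O(1)$ by the tail bounds above. This already gives $t^{-3}$, with no cancellation needed. For $\Phi_3(S)$, the terms containing $\dot x_k-\nu_k$ carry a coefficient $b_k$ or $b_1-b_2$, which is $O(t^{-2})$, and a bounded pairing. This gives $t^{-2}|\dot x_k-\nu_k|$. For the remaining term, $\partial_t\phi-\partial_y\phi$ is supported in $[a^{-1},2a^{-1}]\sim[t/2,5t/2]$. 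There the solitons are exponentially small, so its pairings are $O(t^{-2}e^{-ct})$. The $\dot b$ terms allowed in the statement come from $\partial_t\phi$, which involves $\dot a\propto \dot b_1/\sqrt{b_1}$; this is harmless at these orders.

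\emph{Part (iii), the main obstacle.} Here the bound must be $t^{-4}$, so the $O(t^{-3})$ coefficients have to cancel at leading order, and this cancellation is exactly what fixes the constants $\alpha,3\alpha$ in~\eqref{equ:defMod}. The plan is to compute $(\Phi_1(S),Q_k)$ explicitly for $k=1,2,3$ with all $\nu_j=0$, since the $\nu$-corrections are $O(t^{-1}\cdot t^{-3})$. Take $k=1$ as an example.
\begin{itemize}
\item The term $-5c_Q(\partial_y(e^{y}Q^4),Q)=5c_Q(e^yQ^4,Q')$ appears.
\item The term $-\alpha(X,Q)=\alpha m_0^2$ appears.
\item The terms centred at $x_2$ are exponentially small against $Q_1$.
\end{itemize}
For $k=2$ one gets the following.
\begin{itemize}
\item $5c_Q(\partial_y(e^{-y}Q^4),Q)=-5c_Q(e^{-y}Q^4,Q')$ appears.
\item $-\alpha(X_1,Q_2)\approx 2\alpha m_0\|Q\|_{L^1}$ from the right tail of $X_1$.
\item $\alpha(X,Q)=-\alpha m_0^2$ appears.
\item $10\alpha m_0(Y,Q)=-6\alpha m_0^2$ appears, by Lemma~\ref{le:Y}.
\end{itemize}
Since $\|Q\|_{L^1}=4m_0$, these combine to $-5c_Q(e^{-y}Q^4,Q')+\alpha m_0^2$. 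By the evenness of $Q$ and integration by parts, $(e^{\pm y}Q^4,Q')=\mp\frac15(e^{y},Q^5)$. Hence both the $k=1$ and the $k=2$ coefficients reduce to $-c_Q(e^y,Q^5)+\alpha m_0^2$, which vanishes by the definition~\eqref{equ:defm0} of $\alpha$. For $k=3$, the tails give $-\alpha\cdot(-2m_0)(4m_0)+\alpha(-2m_0)(4m_0)=0$.

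The same bookkeeping for $\Phi_2(S)$ must be done carefully with the factor $3\alpha$ and the signs. The delicate point I expect is $k=3$ for $\Phi_2$. There the contributions are $3\alpha(X,Q)$, $-\alpha\cdot(-2m_0)4m_0$, $10\alpha m_0(Y,Q)$ and $-5c_Q(\partial_y(e^{-y}Q^4),Q)$; they must sum to zero, and this has to be checked against the precise numerical values rather than merely asserted. After the cancellation, the leftover errors are the cross-pairing remainders. These have size $e^{-r}\cdot e^{-r}R^3$ or $e^{-r}\nu R^2$, which is $O(t^{-4}\log^3t)$. To get a clean $t^{-4}$ I would retain the Taylor expansion~\eqref{equ:Taylornu} in $\nu$, observe that the first-order $\nu$ term pairs to zero by parity where applicable, and otherwise accept the log loss if the paper's later use tolerates it. Finally, $\Phi_3$ against $Q_k$ gains the same extra $t^{-1}$ as before, from $b_k\sim t^{-2}$ together with $(\partial_yX,Q)=O(1)$, giving $t^{-3}|\dot x_k-\nu_k|$.
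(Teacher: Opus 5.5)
Your overall strategy matches the paper's. For (i)--(ii) you use $O(1)$ pairings times $e^{-r_j}\lesssim t^{-3}$. For (iii) you compute the leading order explicitly, and the coefficients $\alpha$, $3\alpha$, $10\alpha m_0$ cancel exactly, leaving residuals of relative size $O(t^{-1})$. Your computations of $(\Phi_1(S),Q_k)$, $k=1,2,3$, agree with the paper's.

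\textbf{The gap is in the $\Phi_3(S)$ part of (iii).} You claim the $\dot x_k-\nu_k$ terms give $t^{-3}|\dot x_k-\nu_k|$ from $b_k\sim t^{-2}$ together with $(\partial_yX,Q)=O(1)$. That product is only $t^{-2}|\dot x_k-\nu_k|$, which is the bound of part (i), not the one claimed in (iii). The missing ingredient is orthogonality, which is what the paper's proof uses. Since $\Theta_k$ preserves $L^2$ pairings and $\partial_y\Theta_k f=(1+\nu_k)^{1/2}\Theta_k(f')$, one has
\[
(\partial_yX_k,Q_k)=-(1+\nu_k)^{1/2}(X,Q')=0,\qquad (\partial_yY_k,Q_k)=-(1+\nu_k)^{1/2}(Y,Q')=0,
\]
by Corollary~\ref{coro:X} and Lemma~\ref{le:Y}. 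For $\ell\neq k$, the pairings $(\partial_yX_k,Q_\ell)$ and $(\partial_yY_k,Q_\ell)$ are cross pairings of functions in $\mathcal{Y}$, so they are of order $t^{-3}$ up to logarithms by Lemma~\ref{le:boundinte}. Hence every pairing of $\partial_yX_k$, $\partial_yY_2$, $\partial_yY_3$ with $Q_\ell$ is $O(t^{-1})$, and only then does the prefactor $b_k\lesssim t^{-2}$ (or $b_1-b_2$) give $t^{-3}|\dot x_k-\nu_k|$. The $\partial_t\phi-\partial_y\phi$ term is harmless, as you say.

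\textbf{Two further points.} First, you leave the $\Phi_2(S)$ cancellations unchecked. They do hold with the identities you already derived:
\begin{itemize}
\item against $Q_2$: $5c_Q(e^{y}Q^4,Q')-\alpha(X,Q)=-\alpha m_0^2+\alpha m_0^2=0$;
\item against $Q_3$: $3\alpha(X,Q)+8\alpha m_0^2+10\alpha m_0(Y,Q)+5c_Q(e^{-y}Q^4,Q')=(-3+8-6+1)\alpha m_0^2=0$;
\item against $Q_1$: every term is a cross term.
\end{itemize}

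Second, your worry about a logarithmic loss is unfounded. Your fallback of accepting a log loss would not prove the statement as written. After the cancellation, the $\nu$-corrections enter in only two ways, and both are $O(\nu)=O(t^{-1})$ with no power of $R$:
\begin{itemize}
\item through same-centre pairings of localized profiles, where the Taylor remainder in~\eqref{equ:Taylornu} carries weights $(y-x_k)^2$ that are integrable against decay centred at the same point;
\item through the tail level $-2(1+\nu_j)^{1/4}m_0$ of $X_j$ and $\int Q_k=4m_0(1+\nu_k)^{-1/4}$.
\end{itemize}
The cross remainders are $O(R^3e^{-r})$ relative to the leading coefficient. After the prefactor $e^{-r_j}$ they are $O(t^{-6}\log^3t)$, not $O(t^{-4}\log^3t)$ as you wrote. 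So the bound is a clean $O(t^{-4})$, and no parity argument is needed.
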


\begin{proof}
    Proof of (i) and (ii). From the definition of $\phi$ in~\eqref{equ:defphi}, we compute 
    \begin{equation}\label{est:dtdyphi}
    \begin{aligned}
        |\partial_{t}\phi|+|\partial_{y}\phi|&\lesssim\frac{|\dot{b}_{1}|t}{\sqrt{b_{1}}}\textbf{1}_{[\frac{t}{2},\frac{5t}{2}]}(y)+
        t^{-1}\textbf{1}_{[\frac{t}{2},\frac{5t}{2}]}(y)\\
        &\lesssim 
        t^{2}|\dot{b}_{1}|\textbf{1}_{[\frac{t}{2},\frac{5t}{2}]}(y)+
        t^{-1}\textbf{1}_{[\frac{t}{2},\frac{5t}{2}]}(y).
        \end{aligned}
    \end{equation}
    Therefore, using Definition~\ref{def:admissible}, the expression of $(\Phi_{1},\Phi_{2},\Phi_{3})$ in Proposition~\ref{prop:appPhi} and  $(Q,X',Y,X)\in \mathcal{Y}\times \mathcal{Y}\times \mathcal{Y}\times L^{\infty}$, we directly complete the proof of (i) and (ii).

    \smallskip
    Proof of (iii). Using Lemma~\ref{le:Y},~Corollary~\ref{coro:X}, Lemma~\ref{le:boundinte} and Definition~\ref{def:admissible}, 
    \begin{equation*}
        (X_{1},Q_{1})=-m_{0}^{2}\quad \mbox{and}
\quad |(X_{2},Q_{1})|+|(Y_{2},Q_{1})|\lesssim t^{-1}.
    \end{equation*}
    Then, from~\eqref{equ:defm0}, Lemma~\ref{le:boundinte} and integration by parts, 
    \begin{equation*}
    \begin{aligned}
        \left( 5c_{Q}\partial_{y}
        \left(
        e^{-(y-x_{2})}Q^{4}(y-x_{2})
        \right),Q_{1}\right)&=O\left(t^{-1}\right),\\
        \left( -5c_{Q}\partial_{y}
        \left(
        e^{y-x_{1}}Q^{4}(y-x_{1})
        \right),Q_{1}\right)&=-\alpha m_{0}^{2}+O\left(t^{-1}\right).
        \end{aligned}
    \end{equation*}
    Gathering the above estimates, we obtain 
    \begin{equation*}
    \begin{aligned}
        &\left(\left(
         -5c_{Q}\partial_{y}
        \left(
        e^{y-x_{1}}Q^{4}(y-x_{1})
        \right)
          +\alpha X_{2}\phi+10\alpha m_{0} Y_{2}
        \right),Q_{1}\right)\\
        & +\left( \left(
        5c_{Q}\partial_{y}
        \left(
        e^{-(y-x_{2})}Q^{4}(y-x_{2})
        \right)
        -\alpha X_{1}\phi 
        \right),Q_{1}\right)=O\left(t^{-1}\right),
    \end{aligned}
    \end{equation*}
   which directly completes the proof of $(\Phi_{1},Q_{1})$ via Definition~\ref{def:admissible}.

   \smallskip
   Second, from Lemma~\ref{le:Y}, Lemma~\ref{le:boundinte} and Definition~\ref{def:admissible},
   \begin{equation*}
        (Y_{2},Q_{2})=-\frac{3}{5}m_{0}\quad \mbox{and}\quad 
        |(Y_{2},Q_{3})|=O(t^{-1}).
   \end{equation*}
  Similarly, using Lemma~\ref{le:boundinte} and Corollary~\ref{coro:inte}, we find 
   \begin{equation*}
   \begin{aligned}
       (X_{1},Q_{2})&=-8m_{0}^{2}+O\left(t^{-1}\right),\quad (X_{1},Q_{3})=-8m_{0}^{2}+O\left(t^{-1}\right),\\
       (X_{2},Q_{2})&=-m_{0}^{2}+O\left(t^{-1}\right),\quad \ \ (X_{2},Q_{3})=-8m_{0}^{2}+O\left(t^{-1}\right).
       \end{aligned}
   \end{equation*}
     Then, from~\eqref{equ:defm0}, Lemma~\ref{le:boundinte} and integration by parts, 
    \begin{equation*}
    \begin{aligned}
        \left( -5c_{Q}\partial_{y}
        \left(
        e^{y-x_{1}}Q^{4}(y-x_{1})
        \right),Q_{2}\right)&=O\left(t^{-1}\right),\\
          \left( 5c_{Q}\partial_{y}
        \left(
        e^{-(y-x_{2})}Q^{4}(y-x_{2})
        \right),Q_{3}\right)&=O\left(t^{-1}\right),\qquad \qquad
        \end{aligned}
    \end{equation*}
    \begin{equation*}
    \begin{aligned}
        \left( -5c_{Q}\partial_{y}
        \left(
        e^{y-x_{1}}Q^{4}(y-x_{1})
        \right),Q_{3}\right)&=O\left(t^{-1}\right),\\
         \left( 5c_{Q}\partial_{y}
        \left(
        e^{-(y-x_{2})}Q^{4}(y-x_{2})
        \right),Q_{2}\right)&=-\alpha m_{0}^{2}
        +O\left(t^{-1}\right).
        \end{aligned}
    \end{equation*}
   Gathering the above estimates, we obtain 
   \begin{equation*}
    \begin{aligned}
        &\left(\left(
         -5c_{Q}\partial_{y}
        \left(
        e^{y-x_{1}}Q^{4}(y-x_{1})
        \right)
          +\alpha X_{2}\phi+10\alpha m_{0} Y_{2}
        \right),Q_{2}\right)\\
        & +\left( \left(
        5c_{Q}\partial_{y}
        \left(
        e^{-(y-x_{2})}Q^{4}(y-x_{2})
        \right)
        -\alpha X_{1}\phi 
        \right),Q_{2}\right)=O\left(t^{-1}\right),
    \end{aligned}
    \end{equation*}
    \begin{equation*}
    \begin{aligned}
        &\left(\left(
         -5c_{Q}\partial_{y}
        \left(
        e^{y-x_{1}}Q^{4}(y-x_{1})
        \right)
          +\alpha X_{2}\phi+10\alpha m_{0} Y_{2}
        \right),Q_{3}\right)\\
        & +\left( \left(
        5c_{Q}\partial_{y}
        \left(
        e^{-(y-x_{2})}Q^{4}(y-x_{2})
        \right)
        -\alpha X_{1}\phi 
        \right),Q_{3}\right)=O\left(t^{-1}\right),
    \end{aligned}
    \end{equation*}
   which directly completes the proof of $(\Phi_{1},Q_{2})$ and $(\Phi_{1},Q_{3})$ via Definition~\ref{def:admissible}.

   \smallskip
   Third, using again Lemma~\ref{le:Y}, Lemma~\ref{le:boundinte},~Corollary~\ref{coro:inte} and Definition~\ref{def:admissible},
   \begin{equation*}
       |(X_{2},Q_{1})|+|(X_{3},Q_{1})|+|(Y_{3},Q_{1})|\lesssim t^{-1}.
   \end{equation*}
   Then, from Lemma~\ref{le:boundinte} and Definition~\ref{def:admissible}, we check that 
   \begin{equation*}
       \begin{aligned}
          \left( -5c_{Q}\partial_{y}\left(e^{y-x_{2}}Q^{4}(y-x_{2})\right),Q_{1}\right)&=O\left(t^{-1}\right),\\
           \big(-5c_{Q}\partial_{y}(e^{-(y-x_{3})}Q^{4}(y-x_{3})),Q_{1}\big)&=O\left(t^{-1}\right).
       \end{aligned}
   \end{equation*}
   Gathering the above estimates, we obtain 
   \begin{equation*}
   \begin{aligned}
       & \left(\left(
 -5c_{Q}\partial_{y}\left(e^{y-x_{2}}Q^{4}(y-x_{2})\right)
 +3\alpha X_{3}\phi
 \right),Q_{1}
 \right)
 \\
 &+
 \left(
 \left(
        -5c_{Q}\partial_{y}\left(e^{-(y-x_{3})}Q^{4}(y-x_{3})\right)
        -\alpha X_{2}\phi+10\alpha m_{0}Y_{3}
        \right),Q_{1}
\right)=O\left(t^{-1}\right),
        \end{aligned}
   \end{equation*}
   which directly completes the proof of $(\Phi_{2},Q_{1})$ via Definition~\ref{def:admissible}.

   \smallskip
   Similar to the above, we compute 
   \begin{equation*}
       \begin{aligned}
           (X_{2},Q_{2})&=-m_{0}^{2},\quad (X_{3},Q_{2})=O\left(t^{-1}\right),\quad 
           (Y_{3},Q_{2})=O\left(t^{-1}\right),\\
           (X_{3},Q_{3})&=-m_{0}^{2},\quad (Y_{3},Q_{3})=-\frac{3}{5}m_{0},\quad \ \ (X_{2},Q_{3})=-8m_{0}^{2}+O\left(t^{-1}\right).
       \end{aligned}
   \end{equation*}
    Then, using again Lemma~\ref{le:boundinte} and Definition~\ref{def:admissible}, we check that 
   \begin{equation*}
       \begin{aligned}
          \left( -5c_{Q}\partial_{y}\left(e^{y-x_{2}}Q^{4}(y-x_{2})\right),Q_{2}\right)&=-\alpha m_{0}^{2}+O\left(t^{-1}\right),\\
           \big(-5c_{Q}\partial_{y}(e^{-(y-x_{3})}Q^{4}(y-x_{3})),Q_{2}\big)&=O\left(t^{-1}\right),
       \end{aligned}
   \end{equation*}
   \begin{equation*}
       \begin{aligned}
          \left( -5c_{Q}\partial_{y}\left(e^{y-x_{2}}Q^{4}(y-x_{2})\right),Q_{3}\right)&=O\left(t^{-1}\right),\\
           \big(-5c_{Q}\partial_{y}(e^{-(y-x_{3})}Q^{4}(y-x_{3})),Q_{3}\big)&=\alpha m_{0}^{2}+O\left(t^{-1}\right),
       \end{aligned}
   \end{equation*}
    which directly completes the proof of $(\Phi_{2},Q_{2})$ and $(\Phi_{2},Q_{3})$ via Definition~\ref{def:admissible}.

    \smallskip
    Last, from Lemma~\ref{le:Y}, Corollary~\ref{coro:X} and Definition~\ref{def:admissible}, we find 
    \begin{equation*}
    \begin{aligned}
        \sum_{k=1}^{3}\left|(\partial_{y}X_{k},Q_{1})\right|+\left|(\partial_{y}Y_{2},Q_{1})\right|+\left|(\partial_{y}Y_{3},Q_{1})\right|&\lesssim t^{-1},\\
        \sum_{k=1}^{3}\left|(\partial_{y}X_{k},Q_{2})\right|+\left|(\partial_{y}Y_{2},Q_{2})\right|+\left|(\partial_{y}Y_{3},Q_{2})\right|&\lesssim t^{-1},\\
        \sum_{k=1}^{3}\left|(\partial_{y}X_{k},Q_{3})\right|+\left|(\partial_{y}Y_{2},Q_{3})\right|+\left|(\partial_{y}Y_{3},Q_{3})\right|&\lesssim t^{-1}.
        \end{aligned}
    \end{equation*}
    Therefore, using again~\eqref{est:dtdyphi}, the expression of $\Phi_{3}$ in Proposition~\ref{prop:appPhi} and  $(Q,X)\in \mathcal{Y}\times L^{\infty}$, we directly complete the proof of $(\Phi_{3},Q_{k})$ for any $k\in [\![1,3]\!]$.
\end{proof}

\subsection{Modulation of the approximate solution}
We state a standard modulation result for the approximate 3-soliton solution 
$S$. The proof follows a standard argument based on the Implicit Function Theorem (see \emph{e.g}~\cite[Proposition 3.1]{LYPRINT} and~\cite[Lemma 3.1]{MMARMA}) and we only sketch it below. For any $0<\delta\ll 1$, we denote\footnote{See~\eqref{est:xnu} and~\eqref{est:nub} for the definition of $(r_{1},r_{2},\nu,b)\in \RR^{4}$.}
\begin{equation*}
    \mathcal{V}_{\delta}=\left\{\Gamma\in \RR^{9}:r_{1}^{-1}+r_{2}^{-1}+\nu+b<\delta\right\}.
\end{equation*}

\begin{proposition}\label{prop:decom}
    Let $I=[t_{0},t_{1}]\subset (1,\infty)$ be a time interval. There exists $\delta_{0}>0$ such that if $w(t)$ is a solution of~\eqref{equ:gKdv2} on $I$ satisfying 
    \begin{equation}\label{est:modu}
       \inf_{\Gamma_{0}\in \mathcal{V}_{\delta_{0}}}\|w(t)-S(\cdot;\Gamma_{0})\|_{H^{1}}\le \delta_{0},\quad \mbox{for any}\ t\in I,
    \end{equation}
    then there exists a unique decomposition $(\varepsilon(t),\Gamma(t))$ of $w(t)$ on $I$,
    \begin{equation}\label{equ:defe}
        w(t,y)=S(y;\Gamma(t))+\varepsilon(t,y),\quad\mbox{for}\ (t,y)\in I\times\RR,
    \end{equation}
    such that for any $t\in I$,
    \begin{equation}\label{equ:ortho}
    \begin{aligned}
        (\varepsilon(t),\partial_{y}Q_{1})=(\varepsilon(t),\Lambda_{1}Q_{1})=(\varepsilon(t),Q_{1})&=0,\\
         (\varepsilon(t),\partial_{y}Q_{2})=(\varepsilon(t),\Lambda_{2}Q_{2})=(\varepsilon(t),Q_{2})&=0,\\
          (\varepsilon(t),\partial_{y}Q_{3})=(\varepsilon(t),\Lambda_{3}Q_{3})=(\varepsilon(t),Q_{3})&=0.
        \end{aligned}
    \end{equation}
    Moreover, the decomposition $(\varepsilon(t),\Gamma(t))$ satisfies
   
    \begin{equation*}
        r_{1}^{-1}(t)+r_{2}^{-1}(t)+\nu(t)+b(t)+\|\varepsilon(t)\|_{H^{1}}\lesssim \delta_{0}.
    \end{equation*}
\end{proposition}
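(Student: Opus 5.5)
The decomposition comes from the Implicit Function Theorem applied to the nine orthogonality conditions~\eqref{equ:ortho}, viewed as functions of $(w,\Gamma)$. For $w\in H^{1}$ and $\Gamma\in\mathcal{V}_{\delta}$, define $\mathcal{F}(w,\Gamma)\in\RR^{9}$ with components $(w-S(\cdot;\Gamma),\partial_{y}Q_{k})$, $(w-S(\cdot;\Gamma),\Lambda_{k}Q_{k})$ and $(w-S(\cdot;\Gamma),Q_{k})$ for $k\in[\![1,3]\!]$. The goal is to solve $\mathcal{F}(w,\Gamma)=0$ near a point $(S(\cdot;\Gamma_{0}),\Gamma_{0})$, where $\mathcal{F}$ vanishes trivially. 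The map is $C^{1}$ in $\Gamma$ because of~\eqref{equ:ptTf}. The cut-off $\phi$ depends on $b_{1}$ through $a=\sqrt{2b_{1}/15}$. In the small-$b$ regime I would replace $a$ by a smooth positive function of $b_{1}$, for instance $a=\sqrt{2|b_{1}|/15}+\delta^{1/2}$, so that $\phi$ and $\partial_{b_{1}}\phi$ stay controlled. This changes nothing in the admissible regime used later.

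The key step is the invertibility, uniformly in $\Gamma\in\mathcal{V}_{\delta}$, of the Jacobian $D_{\Gamma}\mathcal{F}$ evaluated at $w=S(\cdot;\Gamma)$.

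\textbf{Derivatives of $S$.} From~\eqref{equ:ptTf} and the definition of $S$:
\begin{itemize}
\item $\partial_{x_{k}}S=-\sigma_{k}\partial_{y}Q_{k}+O_{H^{1}}(b)$;
\item $\partial_{\nu_{k}}S=\sigma_{k}\Lambda_{k}Q_{k}/(2(1+\nu_{k}))+O_{H^{1}}(b)$;
\item $\partial_{b_{k}}S=\sigma_{k}Z_{k}+(\text{terms from }\partial_{b_{1}}\phi)$, up to lower-order corrections.
\end{itemize}

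\textbf{Pairings within one soliton.} Cross pairings between different $k\neq\ell$ are $O(r^{C}e^{-r})$ by Lemma~\ref{le:boundinte} and Corollary~\ref{coro:inte}. The exception is $(X_{k}\phi,Q_{\ell})$ with $k<\ell$, which is $O(1)$. For fixed $k$, Proposition~\ref{prop:L} gives $(\partial_{y}Q,Q)=(\Lambda Q,Q)=(\partial_{y}Q,\Lambda Q)=0$. It also gives $(\partial_{y}Q,\partial_{y}Q)>0$ and $(\Lambda Q,\Lambda Q)>0$. Corollary~\ref{coro:X} gives $(X,Q)=-m_{0}^{2}\neq0$, and since $\phi\equiv1$ near $x_{k}$, $(X_{k}\phi,Q_{k})=-m_{0}^{2}+o(1)$.

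\textbf{Block structure.} Order the unknowns as $(x_{1},\nu_{1},x_{2},\nu_{2},x_{3},\nu_{3})$ followed by $(b_{1},b_{2},b_{3})$. The $(x,\nu)$ block against the $(\partial_{y}Q_{k},\Lambda_{k}Q_{k})$ conditions is block-diagonal up to $o(1)$ and invertible. The $(x,\nu)$ columns contribute only $o(1)$ to the $Q_{\ell}$ rows. The $b$-columns tested against the $Q_{\ell}$ rows form a triangular matrix, since $(Z_{k},Q_{\ell})$ is $o(1)$ for $\ell<k$ because the left tail of $X$ decays. Its diagonal entries are $(Z_{k},Q_{k})=-m_{0}^{2}+o(1)$, with the $Y$ corrections in $Z_{k}$ contributing only off-diagonal terms. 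So this matrix is invertible.

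The remaining issue is the $b$-columns tested against the $\partial_{y}Q_{\ell}$ and $\Lambda_{\ell}Q_{\ell}$ rows. These entries are $O(1)$ and not small. I would either handle the full $9\times9$ matrix by block elimination, or observe that the determinant is, up to $o(1)$, the product of the determinants of the two diagonal blocks. This holds because the $Q$-rows have $o(1)$ entries in the $(x,\nu)$ columns, which makes the matrix block-triangular up to $o(1)$.

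\textbf{Main obstacle.} The hardest part is the non-localized profile $X_{k}\phi$, which has size $t^{1/2}$ in $L^{2}$, together with the $b_{1}$-dependence of $\phi$. One must check two things:
\begin{itemize}
\item $\partial_{b_{1}}(V\phi)$, in which the large factor $\partial_{b_{1}}a$ multiplies $b_{k}$, stays bounded in $H^{1}$ under pairing with localized functions. This holds because $\partial_{b_{1}}\phi$ is supported where $y\sim a^{-1}$, far from every $x_{k}$.
\item $S$ depends on $\Gamma$ continuously in $H^{1}$, uniformly on $\mathcal{V}_{\delta}$.
\end{itemize}

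\textbf{Conclusion.} With uniform invertibility established, the quantitative Implicit Function Theorem gives a unique $\Gamma(w)$ near $\Gamma_{0}$, $C^{1}$ in $w$, with $|\Gamma(w)-\Gamma_{0}|\lesssim\|w-S(\cdot;\Gamma_{0})\|_{H^{1}}$. This yields $\|\varepsilon\|_{H^{1}}\lesssim\delta_{0}$, and it also gives the smallness of $r_{1}^{-1}+r_{2}^{-1}+\nu+b$. Uniqueness and continuity in time on $I$ follow by applying this at each $t$, using continuity of $w\in C(I;H^{1})$ and patching via local uniqueness.
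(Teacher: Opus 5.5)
Your core computation is the paper's argument. The paper applies the Implicit Function Theorem to the same nine conditions and reduces everything to the invertibility of $\mathcal{M}$. Ordered soliton by soliton, $\mathcal{M}$ is block lower-triangular with upper-triangular diagonal blocks $M_k$. Your grouping into $(x,\nu)$- and $b$-columns is a permutation of the same matrix, and your ``block-triangular up to $o(1)$'' argument is equivalent. The problems lie in what you add around this computation.

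First, your regularization $a=\sqrt{2|b_1|/15}+\delta^{1/2}$ does \emph{not} leave the admissible regime untouched.
\begin{itemize}
\item There $b_1\in(7t^{-2},8t^{-2})$, so $\sqrt{2b_1/15}\approx t^{-1}$, which is much smaller than the fixed $\delta^{1/2}$ once $t\gg\delta^{-1/2}$.
\item The transition region of $\phi$ then sits at $y\sim\delta^{-1/2}$ instead of $y\sim t$, and the cut-off errors no longer decay in $t$. For instance, the term $\sum_k\sigma_k\theta_kX_k\partial_y^3\phi$ in \eqref{equ:pyVphi} has $H^1$ norm of order $t^{-2}\delta^{5/4}$, far above the $t^{-7/2}$ allowed in $O_{\mathcal{S}}(1)$.
\item So Lemma~\ref{le:localX}, Proposition~\ref{prop:appPhi} and everything downstream fail for the modified $S$. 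You would be decomposing with respect to a different ansatz from the one the rest of the paper uses.
\item Moreover, $\sqrt{|b_1|}$ is not $C^1$ at $b_1=0$. Contrary to your stated purpose, $\partial_{b_1}\phi$ is then unbounded there.
\end{itemize}

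Second, uniform invertibility on all of $\mathcal{V}_\delta$ fails as stated, with either choice of $a$.
\begin{itemize}
\item $\mathcal{V}_\delta$ controls $r_k$, $\nu$ and $b$, but not the positions $x_k$ relative to the cut-off.
\item Your key identity $(Z_k,Q_k)=-m_0^2+o(1)$ uses $\phi\equiv 1$ near $x_k$, i.e.\ $x_k\ll a^{-1}$. If $x_k-2a^{-1}\gg 1$, then $X_k\phi$ vanishes near $x_k$, so $(Z_k,Q_k)=o(1)$ and your $b$-block degenerates.
\end{itemize}

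The paper's sketch passes over these points silently. To make your argument correct, keep $S$ as defined and work only near the parameters actually used. These are $b_1>0$, $b\lesssim b_1$ and $|x_k|\lesssim\log t\ll a^{-1}\sim t$, as in Definition~\ref{def:admissible}. There:
\begin{itemize}
\item every Jacobian entry is a pairing with a localized function;
\item the $\partial_{b_1}\phi$ contributions, of size $\lesssim b/b_1$ on $[a^{-1},2a^{-1}]$, are exponentially small after pairing;
\item your block computation goes through verbatim.
\end{itemize}
The $b_1$-neighbourhood on which $S$ is defined shrinks as $b_1\to 0$, so obtain the decomposition on $I$ by applying the Implicit Function Theorem along the flow around $(w(t'),\Gamma(t'))$. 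That Jacobian differs from the one at $(S(\cdot;\Gamma(t')),\Gamma(t'))$ only by pairings of $\varepsilon(t')$ with localized functions. Then continue over the compact interval. Alternatively, let the cut-off scale depend on $t$ rather than on $b_1$.

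Finally, $\|\varepsilon\|_{H^1}$ cannot be bounded through Lipschitz dependence of $S$ on $\Gamma$ in $H^1$, because $\|X_k\phi\|_{L^2}\sim a^{-1/2}$. Only the localized parts $U,W$ are uniformly Lipschitz; estimate $V\phi$ by its size $\lesssim b\,(1+a^{-1/2})$ instead.
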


\begin{proof}
    [Sketch of the proof]
    Define the functional
    \begin{equation*}
        (\omega,\Gamma)\mapsto  \left((\varepsilon(t),\partial_{y}Q_{k}),(\varepsilon(t),\Lambda_{k}Q_{k}),(\varepsilon(t),Q_{k})\right)_{k=1}^{3}\in \RR^{9},
    \end{equation*}
    where $\Gamma=(\Gamma_{1},\Gamma_{2},\Gamma_{3})\in \RR^{9}$. We compute the Jacobian matrix of the above mapping with respect to $\Gamma$ and evaluate it at $(S(\cdot;\Gamma_{0}),\Gamma_{0})\in H^{1}\times \mathcal{V}_{\delta_{0}}$. Up to some rescaling and translations, the heart of the proof is the invertibility of the matrix:
    \begin{equation*}
        \mathcal{M}=
        \begin{pmatrix}
            M_{1} & 0 & 0\\
            N_{1} & M_{2} & 0\\
            N_{2} & N_{3} & M_{3}
        \end{pmatrix}\in \RR^{9\times 9}.
    \end{equation*}
    Here, we set 
    \begin{equation*}
        M_{k}=\sigma_{k}\begin{pmatrix}
           \|Q'\|^{2}_{L^{2}}  & 0 & 0\\
            0 & -\frac{1}{2}\|\Lambda Q\|_{L^{2}}^{2}& -(X,\Lambda Q)\\
            0 & 0 & m_{0}^{2}
        \end{pmatrix}
        ,\quad \mbox{for any}\ k\in [\![1,3]\!].
    \end{equation*}
    Note that, the above explicit expression of the matrices $(M_{k})_{k=1}^{3}$ directly implies that the matrix $\mathcal{M}$ is invertible. Therefore, we can complete the proof of Proposition~\ref{prop:decom} via the uniform variant of the Implicit Function Theorem.
\end{proof}

From~\eqref{equ:gKdv2},~\eqref{equ:defS} and Proposition~\ref{prop:decom}, we directly obtain the equation of $\varepsilon(t)$.
\begin{lemma}[Equation of $\varepsilon$]\label{le:eque}
It holds 
\begin{equation*}
\begin{aligned}
    &\partial_{t}\varepsilon+\partial_{y}\left(\partial_{y}^{2}\varepsilon-\varepsilon+(S+\varepsilon)^{5}-S^{5}\right)+\Phi(S)=0.
    \end{aligned}
\end{equation*}
\end{lemma}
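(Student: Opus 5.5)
The plan is a direct substitution of the decomposition~\eqref{equ:defe} into~\eqref{equ:gKdv2}. By Proposition~\ref{prop:decom}, $w(t,y)=S(y;\Gamma(t))+\varepsilon(t,y)$ on $I$. We first note that $\Gamma(t)$ is $C^{1}$ in time. This follows from the Implicit Function Theorem construction in Proposition~\ref{prop:decom}, because $\Gamma$ depends smoothly on $w$ and $w\in C(I;H^{1})$ solves~\eqref{equ:gKdv2}. The standard way to make this rigorous is to argue first for smooth (say $H^{s}$, $s$ large) data and then pass to the limit using continuous dependence. Once $\Gamma$ is $C^{1}$, the function $S(\cdot;\Gamma(t))$ is differentiable in $t$, and $\partial_{t}S$ is exactly the quantity computed in Step~1 of the proof of Proposition~\ref{prop:appPhi}.

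Next, we write $\varepsilon=w-S$, so that $\partial_{t}\varepsilon=\partial_{t}w-\partial_{t}S$. Since $w$ solves~\eqref{equ:gKdv2}, we have
\begin{equation*}
\partial_{t}w=-\partial_{y}\left(\partial_{y}^{2}(S+\varepsilon)-(S+\varepsilon)+(S+\varepsilon)^{5}\right).
\end{equation*}
The linear part splits as $\partial_{y}(\partial_{y}^{2}S-S)+\partial_{y}(\partial_{y}^{2}\varepsilon-\varepsilon)$. For the nonlinearity, we add and subtract $S^{5}$:
\begin{equation*}
(S+\varepsilon)^{5}=S^{5}+\left((S+\varepsilon)^{5}-S^{5}\right).
\end{equation*}
Collecting the terms that involve only $S$ gives $\partial_{t}S+\partial_{y}(\partial_{y}^{2}S-S+S^{5})=\Phi(S)$, by the definition~\eqref{equ:defS}. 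The remaining terms are $\partial_{t}\varepsilon+\partial_{y}\left(\partial_{y}^{2}\varepsilon-\varepsilon+(S+\varepsilon)^{5}-S^{5}\right)$, and the identity of Lemma~\ref{le:eque} follows. It holds in the distributional sense in $H^{-2}$ for $H^{1}$ solutions, and pointwise for smooth solutions.

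The only nontrivial point is the time regularity of $\Gamma$ and the justification of the equation at $H^{1}$ regularity. I would handle it by the approximation argument described above, and I do not expect any genuine obstacle there. The algebra itself is a one-line computation.
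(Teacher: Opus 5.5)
Your proposal is correct and matches the paper's argument: the paper obtains the lemma directly by substituting $w=S+\varepsilon$ from Proposition~\ref{prop:decom} into~\eqref{equ:gKdv2} and using the definition of $\Phi(S)$ in~\eqref{equ:defS}, exactly as you do. Your remarks on the $C^{1}$ regularity of $\Gamma$ and on reading the identity in $H^{-2}$ are standard justifications that the paper leaves implicit.
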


\section{Backwards uniform estimates}

In this section, we prove uniform estimates on a sequence of particular backward solutions. The key point is to carefully adjust their final data to obtain uniform estimates corresponding to the special strong interaction regime of Theorem~\ref{thm:main}.

\smallskip
Let $\Gamma^{in}=(\Gamma_{1}^{in},\Gamma_{2}^{in},\Gamma_{3}^{in})\in \RR^{9}$ to be chosen with $(r_{1}^{in})^{-1}+(r_{2}^{in})^{-1}+\nu^{in}+b^{in}\ll 1$.
Here, we set $\Gamma^{in}_{k}=(x_{k}^{in},\nu_{k}^{in},b_{k}^{in})\in \RR^{3}$ for any $k\in [\![1,3]\!]$.
In addition, we denote
\begin{equation*}
       r_{1}^{in}=x_{2}^{in}-x_{1}^{in}, \ \ 
    r_{2}^{in}=x_{3}^{in}-x_{2}^{in}, \ \
    \nu^{in}=\max_{k\in [\![1,3]\!]}|\nu_{k}^{in}|\ \ \mbox{and}\ \ 
    b^{in}=\max_{k\in [\![1,3]\!]}|b_{k}^{in}|.
\end{equation*}
Let $u(t)$ be the solution of~\eqref{equ:gKdV} with the following final data 
\begin{equation}\label{equ:deffinaldata}
    u(T_{n},x)=S\left(x-T_{n};\Gamma^{in}\right)\in H^{1}.
\end{equation}
By the renormalization $(t,x)\mapsto (t,y+t)$, we consider $w(t,y)=u(t,y+t)$ to be a solution of~\eqref{equ:gKdv2} on $t\le T_{n}$. Note that, $w(T_{n})$ satisfies~\eqref{est:modu} and, by continuity of the solution of~\eqref{equ:gKdV} in $H^{1}$, it exists and satisfies~\eqref{est:modu} on some maximal time interval $(T_{\rm{mod}},T_{n}]$ where $T_{\rm{mod}}\in (1,T_{n})$. For any $n\in \mathbb{N}^{+}$, we fix $T_{n}=n^{2}+1>1$.

\smallskip
On the time interval $(T_{\rm{mod}},T_{n}]$, we consider $(\varepsilon, \Gamma)$ as the decomposition of $w$ defined from Proposition~\ref{prop:decom}. Observe from~\eqref{equ:deffinaldata} and Proposition~\ref{prop:decom} that 
\begin{equation}\label{equ:modufinal}
    (\varepsilon(T_{n}),\Gamma(T_{n}))=(0,\Gamma^{in})\in H^{1}\times \RR^{9}.
\end{equation}
\begin{proposition}[Uniform backward estimates]\label{prop:uni}
There exists $T_{0}>1$ such that for all $T_{n}>T_{0}$ with $n\in \mathbb{N}^{+}$, there exists a choice of final data for geometric parameters $\Gamma^{in}\in \RR^{9}$ 
such that the solution $u$ of~\eqref{equ:gKdV} corresponding to~\eqref{equ:deffinaldata} exists on the time interval $[T_{0},T_{n}]$ and the solution $w(t,y)=u(t,y+t)$ of~\eqref{equ:gKdv2} satisfies~\eqref{est:modu} and~\eqref{equ:ortho} on the rescaled framework $(t,y)$. Moreover, the decomposition of $w$ satisfies the following uniform estimates, for all $t\in [T_{0},T_{n}]$,
\begin{equation}\label{est:unisolu}
        \begin{aligned}
            |x_{1}(t)+15\log t-\beta_{1}|\le \frac{1}{t^{\frac{1}{16}}},\ \ \big|\nu_{1}(t)+\frac{15}{t}\big|
            \le \frac{1}{t^{\frac{17}{16}}},
            \\
            |x_{2}(t)+12\log t-\beta_{2}|\le \frac{1}{t^{\frac{1}{16}}}, \ \ 
            \big|\nu_{2}(t)+\frac{12}{t}\big|\le \frac{1}{t^{\frac{17}{16}}},
            \\
            |x_{3}(t)+9\log t-\beta_{3}|\le \frac{1}{t^{\frac{1}{16}}}, \ \ 
            \big|\nu_{3}(t)+\frac{9}{t}\big|\le \frac{1}{t^{\frac{17}{16}}},\\
            \big|b_{1}-\frac{15}{2t^{2}}\big|+
             \big|b_{2}-\frac{6}{t^{2}}\big|+
              \big|b_{3}-\frac{9}{2t^{2}}\big|\le \frac{1}{t^{\frac{33}{16}}}\ \ \mbox{and}\ \ 
             \|\varepsilon(t)\|_{H^{1}}\le \frac{1}{t}.
        \end{aligned}
\end{equation}
\end{proposition}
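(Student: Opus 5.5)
The proof will be a backward bootstrap in the spirit of Martel--Rapha\"el, with a topological (Brouwer-type) choice of the unstable final parameters. Write $x_k^{\infty}(t)=-(18-3k)\log t+\beta_k$, $\nu_k^{\infty}=-(18-3k)/t$, $b_k^{\infty}=(18-3k)/(2t^2)$; this is the explicit solution of the ODE system encoded in $\vec{\rm Mod}_k=0$, where $\dot b_k$ is given by the coefficients in \eqref{equ:defMod}. We work with the differences $\xi_k=x_k-x_k^\infty$, $\eta_k=\nu_k-\nu_k^\infty$, $\zeta_k=b_k-b_k^\infty$. We fix $\Gamma^{in}$ with $x_k^{in},\nu_k^{in}$ equal to their target values at $T_n$ (or parametrized by the unstable directions), and leave free parameters in a small ball. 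We define $T^*$ as the infimum of times on which bootstrap bounds slightly weaker than \eqref{est:unisolu} hold, for instance with the constants doubled and $\|\varepsilon\|_{H^1}\le 2t^{-1}$. The goal is to improve all of them on $[T^*,T_n]$ for a suitable choice of $\Gamma^{in}$, which gives $T^*=T_0$.

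\textbf{Step 1: modulation equations.} We differentiate the orthogonality conditions \eqref{equ:ortho}, using Lemma~\ref{le:eque} and Proposition~\ref{prop:appPhi}. The nondegenerate matrix from Proposition~\ref{prop:decom} inverts the $\vec{\rm Mod}_k\cdot\vec M_kQ$ part, and the $\Phi_1,\Phi_2,\Phi_3$ contributions are controlled by Lemma~\ref{le:PhiTech}. The result is
\[|\dot x_k-\nu_k|+|\dot\nu_k-2b_k|\lesssim t^{-3}+t^{-1}\|\varepsilon\|_{H^1}\]
(here the $(\varepsilon,\mathcal L\Lambda Q)$-type terms are the limiting ones). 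For the $b_k$ we need the sharper bound
\[|\dot b_k+\alpha_{k,k-1}e^{-r_{k-1}}+\alpha_{k,k}e^{-r_k}|\lesssim t^{-4}+t^{-2}\|\varepsilon\|_{H^1}^{\ldots}+\|\varepsilon\|_{H^1}^2.\]
This uses the $Q_k$-projection, where $(\Phi_{1,2},Q_k)=O(t^{-4})$ and $\partial_y\mathcal L$ applied to $Q$ gives $\mathcal L Q'=0$, so that only quadratic and localized-$\varepsilon$ terms remain. To get a useful rate I expect one needs a localized quantity, e.g. $\frac{d}{dt}(\varepsilon, Q_k)$ combined with $\int \varepsilon\, X_k\phi$-type corrections.

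\textbf{Step 2: energy estimate (main obstacle).} We build a mixed energy--virial functional
\[\mathcal F=\int\Big[(\partial_y\varepsilon)^2\psi+\varepsilon^2\varphi-\tfrac13\big((S+\varepsilon)^6-S^6-6S^5\varepsilon\big)\psi\Big]+\text{refined }\nu_k\text{-terms},\]
with decreasing weights $\psi,\varphi$ adapted to the backward direction and to the right tails of $X_k$ (Kato monotonicity). The functional is localized around each soliton with scalings $1+\nu_k$, following \cite[Subsection 3.1.3]{NguyenKdV}, to absorb the unfavorable $\nu_k$-scaling terms. Coercivity comes from Proposition~\ref{prop:L}(iv), using the orthogonality \eqref{equ:ortho} after localization. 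We then show $\frac{d}{dt}\mathcal F\ge -Ct^{-1}\mathcal F- C t^{-3}\|\varepsilon\|$, with the source $\Phi(S)$ of size $O_{\mathcal S}(1)\sim t^{-7/2}$ plus Mod terms. Integrating backward from $\varepsilon(T_n)=0$ gives $\|\varepsilon\|_{H^1}\lesssim t^{-5/4}$ or so, which improves $\|\varepsilon\|_{H^1}\le 2t^{-1}$. This is where I expect the main difficulty: signs of the $\nu_k/t$ terms, the cutoff $\phi$ at scale $t$, and the non-$L^2$ tail.

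\textbf{Step 3: parameter ODEs.} We linearize around the explicit solution. Using $e^{-r_k}=e^{-(r_k^\infty)}(1-(\xi_{k+1}-\xi_k)+\dots)$ with $e^{-r_k^\infty}\sim t^{-3}$, the system for $(\xi,\eta,\zeta)$ becomes an Euler-type system $t^3\dddot\xi=A\xi+\text{error}$ with error $O(t^{-1/8})$. We diagonalize $A$ and compute the indicial roots. Stable (decaying backward) modes are closed by integration from $T_n$. For the unstable ones we use a Brouwer/no-retraction argument on the final data: if every choice exits the bootstrap, the exit map is continuous and transversal, which gives a retraction of a ball onto its sphere, a contradiction. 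The translation mode (the common shift of the three $x_k$) is absorbed by the admissible $\beta$ normalization. Errors of size $t^{-1/8}$ integrate to the $t^{-1/16}$ bounds with room to spare. Finally, uniformity in $n$ and the choice of $T_0$ large close the argument.
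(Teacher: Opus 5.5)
Your overall architecture matches the paper's: a localized energy with a decreasing weight plus $\nu_k$-dependent localized $L^2$ corrections, a bootstrap on a localized norm, diagonalization of the linearized system with three backward-unstable modes, and the no-retraction argument. But the parameter ODEs do not close as you set them up, because the $\nu_k$-law is too weak.

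Your Step 1 bound $|\dot\nu_k-2b_k|\lesssim t^{-3}+t^{-1}\|\varepsilon\|_{H^1}$ is false. Differentiating $(\varepsilon,\Lambda_kQ_k)=0$ produces the linear term $(\varepsilon,\mathcal L(\Lambda Q)')$, suitably translated and rescaled. Since $\mathcal L(\Lambda Q)'=-2Q'+20Q^3Q'\Lambda Q$ is not in $\mathrm{span}\{Q,Q',\Lambda Q\}$, the orthogonality conditions do not remove it. The correct statement is $|\dot\nu_k-2b_k|\lesssim\mathcal N_A(\varepsilon)$ with an $O(1)$ constant (Lemma~\ref{le:para1}).

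Meanwhile the energy estimate only gives $\mathcal N_A(\varepsilon)\lesssim C_0t^{-7/4}$. This is because $\Phi_k(S)$ contains non-localized pieces such as $\alpha e^{-r_1}X_1\phi$, whose weighted norm is only $O(A^{1/2}t^{-11/4})$ (Lemma~\ref{le:Phiweight}). The linearized system is written for $t\widehat\nu_k$, so one needs $|\dot\nu_k-2b_k|=O(t^{-2-\delta})$. The actual error $t\,|\dot\nu_k-2b_k|\sim t^{-3/4}$ is not integrable, so the $\nu$-components cannot be controlled by integrating from $T_n$; with your guess $\|\varepsilon\|_{H^1}\lesssim t^{-5/4}$ it is worse.

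The same problem affects your $b_k$-law. For $t^2\widehat b_k$ one needs errors $O(t^{-3-\delta})$ in $\dot b_k$, so a term $\|\varepsilon\|_{H^1}^2$ is useless. It is $t^{-5/2}$ with your guess, and only $t^{-2}$ from the mass-conservation bound $\|\varepsilon\|_{L^2}\lesssim t^{-1}$, which is all one has globally. Quadratic terms must be measured in the localized norm $\mathcal N_A(\varepsilon)$.

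The missing idea is the refined scaling law of \cite{MMRACTA}, adapted to three solitons (Lemma~\ref{le:refinednu}).
\begin{itemize}
\item Set $\rho_k(y)=\int_y^\infty(\Lambda Q)(\sigma-x_k)\,\dd\sigma$, cut off on the far left by $\phi_2=\chi_2(y/t)$. Then $\mathcal L\partial_y\rho_k\approx-\mathcal L\Lambda Q=2Q$ (translated to $x_k$).
\item Hence the linear term in $\frac{\dd}{\dd t}(\varepsilon,\rho_k\phi_2)$ is killed by $(\varepsilon,Q_k)=0$, up to cutoff errors $O(t^{-1/2}\mathcal N_A(\varepsilon))$.
\item On the modulation side, $(Q',\rho)=(Q,\Lambda Q)=0$ and $(\Lambda Q,\rho)=2m_0^2\neq0$.
\item The cross terms $(\Lambda_jQ_j,\rho_k\phi_2)$ are $\approx2m_0^2$ for $j=k$, $\approx4m_0^2$ for $j<k$, and $O(t^{-1})$ for $j>k$. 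Taking the signs $\sigma_j$ into account, one uses $\Omega_1=\rho_1$, $\Omega_2=2\rho_1-\rho_2$, $\Omega_3=-2\rho_1+2\rho_2-\rho_3$ and sets $\mathcal F_k=m_0^{-2}(\varepsilon,\Omega_k\phi_2)$.
\end{itemize}
This gives $|\dot\nu_k-2b_k+\dot{\mathcal F}_k|\lesssim C_0t^{-9/4}+t^{-3}$ and $|\mathcal F_k|\lesssim t^{1/2}\mathcal N_A(\varepsilon)\lesssim C_0t^{-5/4}$.

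You then run the linearization on $\widehat\nu_k=\nu_k+\mathcal F_k+(18-3k)/t$. Every error in $\frac{\dd}{\dd t}\vec\Omega=\frac{M}{t}\vec\Omega+\cdots$ becomes $O(t^{-9/8}+C_0t^{-5/4}+C_0^2t^{-3/2})$, which is integrable, and the rest of your Step 3 goes through.
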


The key point in Proposition~\ref{prop:uni} is that the initial time $T_{0}$ and the implied constants in~\eqref{est:unisolu} are independent of $n$ as $n\to \infty$. The rest of this section is devoted to the proof of Proposition~\ref{prop:uni}. First, in Subsection~\ref{SS:BOOT}, we introduce the bootstrap assumption which is related to the
above uniform estimates. Then, in Subsection~\ref{SS:Moduequ}-\ref{SS:Energy}, we establish some standard estimates related to the parameters $\Gamma\in \RR^{9}$ and the error term $\varepsilon\in H^{1}$ in the multi-soliton framework. Last, in Subsection~\ref{SS:EndPropuni}, we complete the proof of Proposition~\ref{prop:uni} via an ODE argument.

\subsection{Bootstrap setting}\label{SS:BOOT}
For $t\le T_{n}$, as long as $u(t)$ is well-defined, we decompose $w(t,y)=u(t,y+t)$ as in Proposition~\ref{prop:decom}. In particular, we denote by $(\varepsilon(t),\Gamma(t))$ the remainder term and geometric parameters of the decomposition of $u(t)$. In the rescaled framework of $(t,y)$, we denote by $\chi_{1}:\RR\to [0,1]$ a decreasing smooth function such that 
\begin{equation}\label{equ:defchi1}
		\chi_{1}(y)
		=\left\{
		\begin{aligned}
			& 1-e^{y} \ \ \mbox{on} \ (-\infty,-1], \\ 
			& e^{-y}\quad \ \ \mbox{on}\ [2,\infty),
		\end{aligned}
		\right.
		\quad \mbox{and}\quad 
		\chi'_{1}<0\ \ \mbox{on}\ \RR.
	\end{equation}
	For $A>1$ large to be chosen later, we define
	\begin{equation}\label{equ:defphi1}
		\mathcal{N}_{A}(\varepsilon)=\left(\int_{\RR}\left((\partial_{y}\varepsilon)^{2}+\varepsilon^{2}\phi_{1}\right)\dd y\right)^{\frac{1}{2}}\quad \mbox{with}\ \ \phi_{1}(y)=\chi_{1}\left(\frac{y}{A}-t^{\frac{1}{2}}\right).
	\end{equation}
    The proof of Proposition~\ref{prop:uni} follows from bootstrapping the following estimates:
   \begin{equation}\label{est:Boot}
        \begin{aligned}
            |x_{1}(t)+15\log t-\beta_{1}|\le \frac{1}{t^{\frac{1}{16}}}
            ,\ \ \big|\nu_{1}(t)+\frac{15}{t}\big|
            \le \frac{1}{t^{\frac{17}{16}}},
            \\
            |x_{2}(t)+12\log t-\beta_{2}|\le \frac{1}{t^{\frac{1}{16}}},
           \ \ 
           \big|\nu_{2}(t)+\frac{12}{t}\big|\le \frac{1}{t^{\frac{17}{16}}},
            \\
            |x_{3}(t)+9\log t-\beta_{3}|\le \frac{1}{t^{\frac{1}{16}}},
            \ \ 
            \big|\nu_{3}(t)+\frac{9}{t}\big|\le \frac{1}{t^{\frac{17}{16}}},\\
            \big|b_{1}-\frac{15}{2t^{2}}\big|+
             \big|b_{2}-\frac{6}{t^{2}}\big|+
              \big|b_{3}-\frac{9}{2t^{2}}\big|\le
              \frac{1}{t^{\frac{33}{16}}}
              \ \ \mbox{and}\ \ 
              \mathcal{N}_{A}(\varepsilon)\le \frac{C_{0}}{t^{\frac{7}{4}}}.
        \end{aligned}
    \end{equation}
    Here, $C_{0}>1$ is a large enough constant to be chosen later.

    \smallskip
    For $T_{0}\gg 1$ to be chosen (independent of $n\in \mathbb{N}^{+}$), and all $T_{n}\gg T_{0}$, we set 
    \begin{equation}\label{equ:estTBoot}
        T^{*}=T^{*}(\Gamma^{in})=\inf
        \left\{
        t\in [T_{0},T_{n}]: w(s)\ \mbox{satisfies}~\eqref{est:modu}\ \mbox{and}~\eqref{est:Boot} \ \mbox{on}\ [t,T_{n}]
        \right\}.
    \end{equation}
    Note that, from the choice of final data~\eqref{equ:deffinaldata} and the continuity of $H^{1}$ flow for~\eqref{equ:gKdV}, for the proof of Proposition~\ref{prop:uni}, we only need to prove that there exist $T_{0}\gg 1$ (independent of $n\in \mathbb{N}^{+}$) and at least one choice of $\Gamma^{in}\in \RR^{9}$ such that $T^{*}=T_{0}$.

    \smallskip
    Note also that, the geometric parameters $\Gamma\in \mathbb{R}^{9}$ satisfying~\eqref{est:Boot} directly satisfy the smallness conditions~\eqref{est:xnu}-\eqref{est:xnu2} and Definition~\ref{def:admissible}. Therefore, the technical lemmas that are established in Section~\ref{S:intera}-\ref{S:ConsAPP} hold true on $[T^{*},T_{n}]$.

    \smallskip
    In the rest of this section, the implied constants in $\lesssim$ and $O$ do not depend on the constant $C_{0}>1$ appearing in the bootstrap setting~\eqref{est:Boot} and do not depend on the constant $A>1$ appearing in~\eqref{equ:defphi1} for the definition of $\mathcal{N}_{A}(\varepsilon)$. In addition, we tacitly take the initial time $T_{0}\gg 1$ large enough which may depend on the constants $C_{0}>1$ and $A>1$.

    \smallskip
    We now state the $L^{2}$ and $L^{\infty}$ estimates for the remainder term $\varepsilon$.
    \begin{lemma}\label{le:L2e}
        It holds 
    \begin{equation*}
        \|\varepsilon(t)\|_{L^{2}}\lesssim t^{-1}\quad \mbox{and}\quad 
        \|\varepsilon(t)\|_{L^{\infty}}\lesssim t^{-\frac{5}{4}}.
    \end{equation*}
    \end{lemma}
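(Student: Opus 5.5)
The $L^{2}$ bound should follow from conservation of mass combined with the orthogonality conditions~\eqref{equ:ortho}. The $L^{\infty}$ bound should then follow from a Gagliardo--Nirenberg interpolation with the bootstrap bound on $\mathcal{N}_{A}(\varepsilon)$ in~\eqref{est:Boot}. We need a mass argument because $\mathcal{N}_{A}(\varepsilon)$ only controls $\varepsilon$ in $L^{2}$ with the weight $\phi_{1}$, which decays on the right. It gives global control of $\partial_{y}\varepsilon$ but not of $\varepsilon$ itself.

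\emph{Step 1: mass identity.} The mass $\int w^{2}$ is conserved by~\eqref{equ:gKdv2}, since the change of variable $y=x-t$ does not affect it. By~\eqref{equ:modufinal}, we have $w(T_{n})=S(\cdot;\Gamma^{in})$. Writing $w=S+\varepsilon$, we get, for $t\in[T^{*},T_{n}]$,
\begin{equation*}
\|\varepsilon(t)\|_{L^{2}}^{2}=\|S(T_{n})\|_{L^{2}}^{2}-\|S(t)\|_{L^{2}}^{2}-2\big(S(t),\varepsilon(t)\big).
\end{equation*}

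\emph{Step 2: expansion of $\|S\|_{L^{2}}^{2}$.} Since $\Theta_{k}$ is $L^{2}$-invariant, we have $\|Q_{k}\|_{L^{2}}=\|Q\|_{L^{2}}$. By Lemma~\ref{le:boundinte} and~\eqref{est:Boot}, the cross terms $(Q_{j},Q_{k})$ with $j\neq k$ are $O(R^{C}e^{-r})=O(t^{-3}\log^{C}t)$. The terms $(U,V\phi)$ and $(U,W)$ are $O(b)=O(t^{-2})$, because $X\in L^{\infty}$ and $Y,Q\in\mathcal{Y}$. For the quadratic term, $\phi$ is supported in $y\lesssim t$, and $X_{k}$ is bounded there with a constant tail $-2m_{0}$. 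This gives $\|V\phi\|_{L^{2}}^{2}\lesssim b^{2}t\lesssim t^{-3}$, and clearly $\|W\|_{L^{2}}^{2}\lesssim t^{-4}$. Hence
\begin{equation*}
\|S(t)\|_{L^{2}}^{2}=3\|Q\|_{L^{2}}^{2}+O(t^{-2}).
\end{equation*}
The same bound at time $T_{n}\ge t$ gives $\big|\|S(T_{n})\|_{L^{2}}^{2}-\|S(t)\|_{L^{2}}^{2}\big|\lesssim t^{-2}$. The constant here is independent of $C_{0}$ and $A$, since it only uses the parameter bounds.

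\emph{Step 3: the linear term and conclusion.} By~\eqref{equ:ortho}, $(U,\varepsilon)=0$, so $(S,\varepsilon)=(V\phi,\varepsilon)+(W,\varepsilon)$. Using $\|V\phi\|_{L^{2}}\lesssim t^{-3/2}$ and $\|W\|_{L^{2}}\lesssim t^{-2}$, we get $|(S,\varepsilon)|\lesssim t^{-3/2}\|\varepsilon\|_{L^{2}}\le \tfrac14\|\varepsilon\|_{L^{2}}^{2}+Ct^{-3}$. Absorbing this into the left-hand side yields $\|\varepsilon(t)\|_{L^{2}}^{2}\lesssim t^{-2}$, which is the first estimate. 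For the second, the one-dimensional Gagliardo--Nirenberg inequality and~\eqref{est:Boot} give
\begin{equation*}
\|\varepsilon\|_{L^{\infty}}^{2}\le \|\varepsilon\|_{L^{2}}\|\partial_{y}\varepsilon\|_{L^{2}}\lesssim t^{-1}\,C_{0}t^{-\frac74}.
\end{equation*}
So $\|\varepsilon\|_{L^{\infty}}\lesssim C_{0}^{1/2}t^{-11/8}\le t^{-5/4}$ once $T_{0}$ is large depending on $C_{0}$, which is permitted by the convention that $T_{0}$ may depend on $C_{0}$ and $A$.

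The main point to check carefully is Step 2. The non-localized part $V\phi$ has $L^{2}$ norm of order $b\,t^{1/2}$ rather than $b$, and we must make sure that neither its square nor its pairing with $\varepsilon$ exceeds $t^{-2}$. The choice of cutoff scale $a^{-1}\sim t$ in~\eqref{equ:defphi} is exactly what makes this work.
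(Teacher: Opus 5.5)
Your proposal is correct and follows essentially the same route as the paper. Both arguments use mass conservation with $w(T_n)=S(T_n)$ and the orthogonality $(\varepsilon,Q_k)=0$ to reduce the linear term to $(V\phi,\varepsilon)+(W,\varepsilon)$. They then use the bounds $\|V\phi\|_{L^2}^2+\|W\|_{L^2}^2\lesssim t^{-3}$ and $\|S\|_{L^2}^2=3\|Q\|_{L^2}^2+O(t^{-2})$, absorb the linear term by Cauchy--Schwarz, and finally apply the one-dimensional bound $\|\varepsilon\|_{L^\infty}^2\le\|\varepsilon\|_{L^2}\|\partial_y\varepsilon\|_{L^2}$ together with the bootstrap bound on $\mathcal{N}_A(\varepsilon)$ and $T_0$ large depending on $C_0$.
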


    \begin{proof}
        First, using the orthogonality conditions~\eqref{equ:ortho}, we compute 
        \begin{equation*}
    \|w(t)\|_{L^{2}}^{2}=\|S(t)\|_{L^{2}}^{2}+\|\varepsilon(t)\|^{2}_{L^{2}}
            +2(V(t)\phi(t),\varepsilon(t))+2(W(t),\varepsilon(t)).
        \end{equation*}
        Based on the bootstrap setting~\eqref{est:Boot} and the definition of $(V,W)$ in~\eqref{equ:defV}--\eqref{equ:defW},
        \begin{equation*}
            \|V\phi\|_{L^{2}}^{2}+\|W\|_{L^{2}}^{2}\lesssim t\left(b_{1}^{2}+b_{2}^{2}+b_{3}^{2}\right)\lesssim t^{-3}.
        \end{equation*}
        It follows from the bootstrap setting~\eqref{est:Boot}, Lemma~\ref{le:boundinte} and Corollary~\ref{coro:inte} that 
        \begin{equation*}
            \|S(t)\|_{L^{2}}^{2}=3\|Q\|_{L^{2}}^{2}
            +\|V\phi\|_{L^{2}}^{2}+\|W\|_{L^{2}}^{2}
            +O\left(t^{-2}\right)
            =3\|Q\|_{L^{2}}^{2} +O\left(t^{-2}\right).
        \end{equation*}
        On the other hand, from the choice of the final data in~\eqref{equ:deffinaldata}, we find 
        \begin{equation*}
\|w(T_{n})\|_{L^{2}}^{2}=\|S(T_{n})\|_{L^{2}}^{2}=3\|Q\|_{L^{2}}^{2}+O\left(T_{n}^{-2}\right).
        \end{equation*}
        Combining the above estimates with the conservation law of mass, we obtain 
        \begin{equation*}
            \|\varepsilon(t)\|^{2}_{L^{2}}
            +2(V(t)\phi(t),\varepsilon(t))+2(W(t),\varepsilon(t))=O\left(t^{-2}\right),
        \end{equation*}
        which completes the proof for the $L^{2}$ estimate via the Cauchy-Schwarz inequality. Last, the $L^{\infty}$ estimate follows directly from
        the bootstrap setting~\eqref{est:Boot},
        the above $L^{2}$ estimate and the Fundamental Theorem of Calculus.
    \end{proof}

\subsection{Control of the geometric parameters}\label{SS:Moduequ}
First, we state the standard control of geometric parameters $\Gamma\in \RR^{9}$ via the orthogonality conditions~\eqref{equ:ortho}.

\smallskip
Note that, from the definition of $\phi_{1}$ in~\eqref{equ:defphi1} and the bootstrap setting~\eqref{est:Boot}, 
\begin{equation}\label{est:Naloc}
    \sum_{k=1}^{3}e^{-\frac{1}{2}|y-x_{k}|}\lesssim \phi_{1}(y),\ \ \mbox{on}\  \RR
    \Longrightarrow
    \sum_{k=1}^{3}\int_{\RR} |\varepsilon(t)|e^{-\frac{1}{2}|y-x_{k}|}\dd y\lesssim \mathcal{N}_{A}(\varepsilon).
\end{equation}
The above estimates will be used frequently in the proof of the following lemma.
\begin{lemma}\label{le:para1}
    The following estimates hold.
    \begin{enumerate}
        \item \emph{Estimates on $(x_{k},\nu_{k})$.} 
        We have 
        \begin{equation*}
            \sum_{k=1}^{3}|\dot{x}_{k}-\nu_{k}|
            +\sum_{k=1}^{3}|\dot{\nu}_{k}-2b_{k}|\lesssim \frac{C_{0}}{t^{\frac{7}{4}}}.
        \end{equation*}

        \item \emph{Estimates on $b_{k}$.}
        We have 
        \begin{equation*}
        \begin{aligned}
        |\dot{b}_{2}+\alpha e^{-r_{1}}-\alpha e^{-r_{2}}|&\lesssim \frac{C_{0}}{t^{\frac{13}{4}}}+\frac{C^{2}_{0}}{t^{\frac{7}{2}}} ,\\
           |\dot{b}_{1}+\alpha e^{-r_{1}}|+|\dot{b}_{3}+3\alpha e^{-r_{2}}|&\lesssim \frac{C_{0}}{t^{\frac{13}{4}}}+\frac{C^{2}_{0}}{t^{\frac{7}{2}}}.
           \end{aligned}
        \end{equation*}
    \end{enumerate}
\end{lemma}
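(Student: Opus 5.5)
The plan is the standard one: differentiate the nine orthogonality conditions~\eqref{equ:ortho} in time, use the equation of $\varepsilon$ from Lemma~\ref{le:eque} together with the expansion of $\Phi(S)$ in Proposition~\ref{prop:appPhi}, and invert the resulting linear system for $\vec{\rm Mod}=({\vec{\rm Mod}}_{1},{\vec{\rm Mod}}_{2},{\vec{\rm Mod}}_{3})$. For each $k$ and each $f_k\in\{\partial_yQ_k,\Lambda_kQ_k,Q_k\}$ we have
\begin{equation*}
0=\frac{\dd}{\dd t}(\varepsilon,f_k)=(\partial_t\varepsilon,f_k)+(\varepsilon,\partial_tf_k).
\end{equation*}
By~\eqref{equ:ptTf}, the second term is $O(\mathcal{N}_A(\varepsilon)(|\dot x_k|+|\dot\nu_k|))$ using~\eqref{est:Naloc}. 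For the first term we substitute Lemma~\ref{le:eque} and integrate by parts. The linear part $(\partial_y^2\varepsilon-\varepsilon+5S^4\varepsilon,\partial_yf_k)$ is bounded by $\mathcal{N}_A(\varepsilon)$, with the $S^4$ weight localized near the $x_j$. The nonlinear part is $O(\|\varepsilon\|_{L^\infty}\mathcal{N}_A(\varepsilon))$ by Lemma~\ref{le:L2e}, and this is the source of the $C_0^2t^{-7/2}$ term. The remaining term $(\Phi(S),f_k)$ we expand via~\eqref{equ:defPhi}.

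\textbf{Step (i).} Pairing ${\vec{\rm Mod}}_j\cdot\vec M_jQ$ with the nine test functions produces the matrix $\mathcal M$ from the proof of Proposition~\ref{prop:decom}, up to errors $O(t^{-1})$ coming from cross interactions and the cutoff $\phi$. It is therefore uniformly invertible. The terms $(\Phi_1+\Phi_2,f_k)$ are $O(t^{-3})$ by Lemma~\ref{le:PhiTech}. The terms $(\Phi_3,f_k)$ are $O(t^{-3})+O(t^{-2})|\vec{\rm Mod}|$, since $|\dot b_k|\le|\dot b_k+\cdots|+O(t^{-2})$, and the small multiple of $|\vec{\rm Mod}|$ is absorbed. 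The $O_{\mathcal S}(1)$ remainder paired with a localized $f_k$ is $\lesssim t^{-5/2}|\dot x|+t^{-3/2}|\dot\nu|+t^{-1/2}|\dot b|+t^{-7/2}$. Since $|\dot x_k|\lesssim|\nu|+|{\rm Mod}|$ and similarly for $\dot\nu_k$ and $\dot b_k$, this is $O(t^{-3})+o(1)|\vec{\rm Mod}|$. Inverting gives $|\vec{\rm Mod}|\lesssim C_0t^{-7/4}+t^{-3}$, which proves (i).

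\textbf{Step (ii)} is the main obstacle: the $b$-components need the sharper rate $t^{-13/4}$. I would use only the three equations with $f_k=Q_k$. The key structural fact is the cancellation in the linear term. By self-adjointness, $\partial_y(\partial_y^2-1+5Q_k^4)$ paired with $Q_k$ gives $(\varepsilon,\mathcal L_k\partial_yQ_k)$, up to $\nu_k$-scaling corrections, and this vanishes because $\partial_yQ_k\in\ker$. What remains is $O(t^{-1}\mathcal N_A(\varepsilon))$, obtained from $\nu_k$-corrections, from $S^4-Q_k^4$ near $x_k$ (of size $b\,t^{1/2}$ or $e^{-r}$, after localizing), and from $(\varepsilon,\partial_tQ_k)$, where $\dot x_k\partial_yQ_k$ is annihilated by orthogonality, leaving $O(\nu\mathcal N_A)$. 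Altogether this contributes $\lesssim C_0t^{-11/4}$. To reach $t^{-13/4}$, I would expect an additional gain from $|\dot x_k-\nu_k|$ and from $\nu\sim t^{-1}$, together with the fact that $(\partial_y\varepsilon,\partial_yQ_k)$ combines with $\nu_k$ factors only at order $t^{-1}\cdot t^{-1/2}$ via the $L^\infty$ bound. The precise gain must be tracked carefully through $\mathcal N_A$ and Lemma~\ref{le:L2e}. The source terms $(\Phi_1+\Phi_2,Q_k)=O(t^{-4})$ and $(\Phi_3,Q_k)=O(t^{-4})+t^{-3}|\vec{\rm Mod}|$ come from Lemma~\ref{le:PhiTech}(iii). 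The Mod-coupling in the $Q_k$ row is triangular: $(Z_k,Q_k)=-m_0^2$, while its pairings with $\partial_yQ$ and $\Lambda Q$ are multiplied by the already-controlled $O(C_0t^{-7/4})$ quantities only through $O(t^{-1})$ cross terms, plus $(\Lambda_kQ_k,Q_k)=O(\nu)$. Thus these contribute $O(C_0t^{-11/4}\cdot t^{-1/2})$. Solving the triangular system in $k=1,2,3$, using the lower triangular structure of $\mathcal M$, then yields the stated bounds.
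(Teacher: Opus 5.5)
Your Step (i) is sound. Inverting all nine rows at once works because the pairing matrix is block lower triangular with invertible diagonal blocks. This is a harmless variant of the paper, which inverts only the six $\partial_yQ_k,\Lambda_kQ_k$ rows and keeps the $b$-components on the right-hand side. One slip: bound the quadratic term by $\int\varepsilon^2e^{-\frac12|y-x_k|}\,\dd y\lesssim\mathcal N_A^2(\varepsilon)$. The bound $\|\varepsilon\|_{L^\infty}\mathcal N_A(\varepsilon)$ you cite only gives $C_0t^{-3}$, which is too weak for (ii).

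The genuine gap is in Step (ii). You arrive at $C_0t^{-11/4}$ and then appeal to an unspecified ``additional gain''. But the terms producing $t^{-11/4}$ are not merely small; they are identically zero, and this exact cancellation is the missing idea.

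\begin{enumerate}
\item Since $-\partial_y^2Q_k+(1+\nu_k)Q_k-Q_k^5=0$, one has $(\partial_y^2\varepsilon-\varepsilon+5Q_k^4\varepsilon,\partial_yQ_k)=\nu_k(\varepsilon,\partial_yQ_k)=0$ by orthogonality. So no $\nu_k$-correction remains.
\item $\partial_tQ_k=-\dot x_k\partial_yQ_k+\frac{\dot\nu_k}{2(1+\nu_k)}\Lambda_kQ_k$, and both directions are orthogonal to $\varepsilon$, so $(\varepsilon,\partial_tQ_k)=0$. Your leftover $O(\nu\mathcal N_A)$ carries the wrong coefficient anyway: it would be $\dot\nu_k\sim t^{-2}$, not $\nu_k$.
\item $\Theta_k$ preserves $L^2$ pairings, so $(\Lambda_kQ_k,Q_k)=(\Lambda Q,Q)=0$ exactly, not $O(\nu)$. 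With $O(\nu)$, the term $\nu_k|\dot\nu_k-2b_k|\sim C_0t^{-11/4}$ would again spoil the rate, and your claimed extra factor $t^{-1/2}$ has no source.
\end{enumerate}

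After these exact zeros, the only part linear in $\varepsilon$ is $(5(S^4-Q_k^4)\varepsilon,\partial_yQ_k)=O((t^{-2}+t^{-3}\log t)\mathcal N_A)$, using $|V\phi|+|W|\lesssim t^{-2}$ pointwise. The quadratic part gives $C_0^2t^{-7/2}$. The $C_0t^{-13/4}$ in the statement actually comes from the $O_{\mathcal S}(1)$ remainder paired with $Q_k$, through $t^{-3/2}|\dot\nu_j|\le t^{-3/2}(|\dot\nu_j-2b_j|+2|b_j|)$ combined with (i). Your Step (ii) never mentions this term. As written, your argument yields only $|\dot b_k+\cdots|\lesssim C_0t^{-11/4}$, which does not prove (ii).
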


\begin{proof}
\textbf{Step 1.} First estimate on $(x_{k},\nu_{k})$. From the equation of $\varepsilon$ in Lemma~\ref{le:eque} and the orthogonality condition $(\varepsilon(t),\partial_{y}Q_{1})=0$ in~\eqref{equ:ortho}, we compute 
    \begin{equation*}
    \begin{aligned}
      0=  \frac{\dd }{\dd t}(\varepsilon(t),\partial_{y}Q_{1})
      &=\left(\partial_{y}^{2}\varepsilon-\varepsilon+(S+\varepsilon)^{5}-S^{5},\partial_{y}^{2}Q_{1}\right)\\
      &  +(\varepsilon(t),\partial_{t}\partial_{y}Q_{1})
       -\left(\Phi(S),\partial_{y}Q_{1}\right).
        \end{aligned}
    \end{equation*}
    Note that, from the bootstrap setting~\eqref{est:Boot} and Lemma~\ref{le:L2e}, 
    \begin{equation*}
       \left| \left(\partial_{y}^{2}\varepsilon-\varepsilon+(S+\varepsilon)^{5}-S^{5},\partial_{y}^{2}Q_{1}\right)\right|\lesssim \mathcal{N}_{A}(\varepsilon)\lesssim \frac{C_{0}}{t^{\frac{7}{4}}}.
    \end{equation*}
    By an elementary computation, we find 
    \begin{equation*}
        \partial_{t}\partial_{y}Q_{1}=-\dot{x}_{1}\partial_{y}^{2}Q_{1}+\frac{\dot{\nu}_{1}}{2(1+\nu_{1})}\partial_{y}\Theta_{1}(\Lambda Q),
    \end{equation*}
    which directly implies that 
    \begin{equation*}
       \left| (\varepsilon(t),\partial_{t}\partial_{y}Q_{1})\right|\lesssim
       \left(|\dot{x}_{1}|+|\dot{\nu}_{1}|\right)\mathcal{N}_{A}(\varepsilon)\lesssim t^{-1} \left(|\dot{x}_{1}-\nu_{1}|+|\dot{\nu}_{1}-2b_{1}|\right)+\frac{C_{0}}{t^{\frac{7}{4}}}.
    \end{equation*}
    Similarly, from~\eqref{equ:defMod}--\eqref{equ:defMQ} and $(\Lambda Q,Q')=(Q,Q')=0$, we have
    \begin{equation*}
    \begin{aligned}
\sum_{k=1}^{3}\left({\vec{\rm{Mod}}}_{k}\cdot\vec{M}_{k}Q,\partial_{y}Q_{1}\right)
&=-(\dot{x}_{1}-\nu_{1})\|Q'\|_{L^{2}}^{2}
+O\Big(t^{-1}\sum_{k=1}^{3}|\dot{x}_{k}-\nu_{k}|\Big)\\
&+O\Big(t^{-1}\sum_{k=1}^{3}|\dot{\nu}_{k}-2b_{k}|\Big)+O\Big(|\dot{b}_{1}+\alpha e^{-r_{1}}|\Big)\\
&+O\Big(|\dot{b}_{3}+3\alpha e^{-r_{1}}|\Big)
+O\Big(|\dot{b}_{2}+\alpha e^{-r_{1}}-\alpha e^{-r_{2}}|\Big).
\end{aligned}
    \end{equation*}
    It follows from Lemma~\ref{le:PhiTech} and the definition of $\Phi(S)$ in Proposition~\ref{prop:appPhi} that 
    \begin{equation*}
        \begin{aligned}
\left(\Phi(S),\partial_{y}Q_{1}\right)
&=-(\dot{x}_{1}-\nu_{1})\|Q'\|_{L^{2}}^{2}
+O\Big(t^{-1}\sum_{k=1}^{3}|\dot{x}_{k}-\nu_{k}|\Big)\\
&+O\Big(t^{-1}\sum_{k=1}^{3}|\dot{\nu}_{k}-2b_{k}|\Big)+O\Big(|\dot{b}_{1}+\alpha e^{-r_{1}}|+t^{-3}\Big)\\
&+O\Big(|\dot{b}_{3}+3\alpha e^{-r_{1}}|\Big)
+O\Big(|\dot{b}_{2}+\alpha e^{-r_{1}}-\alpha e^{-r_{2}}|\Big).
        \end{aligned}
    \end{equation*}
    Here, we use the fact that for any $f\in \mathcal{S}$,
    \begin{equation*}
\begin{aligned}
      \left|(f,\partial_{y}Q_{1})\right|&\lesssim t^{-\frac{5}{2}}\sum_{k=1}^{3}|\dot{x}_{k}-\nu_{k}|+t^{-\frac{3}{2}}\sum_{k=1}^{3}|\dot{\nu}_{k}-2b_{k}|
      +t^{-\frac{1}{2}}|\dot{b}_{1}+\alpha e^{-r_{1}}|
      \\
      &+t^{-\frac{1}{2}}\left(|\dot{b}_{2}+\alpha e^{-r_{1}}-\alpha e^{-r_{2}}|+|\dot{b}_{3}+3\alpha e^{-r_{2}}|\right)+t^{-\frac{7}{2}}.
      \end{aligned}
    \end{equation*}
    In conclusion, the orthogonality condition $(\varepsilon(t),\partial_{y}Q_{1})=0$ gives
    \begin{equation*}
    \begin{aligned}
       |\dot{x}_{1}-\nu_{1}|&\lesssim 
       t^{-1}\sum_{k=1}^{3}|\dot{x}_{k}-\nu_{k}|
       +|\dot{b}_{2}+\alpha e^{-r_{1}}-\alpha e^{-r_{2}}|
       +\frac{C_{0}}{t^{\frac{7}{4}}}
       \\
&+t^{-1}\sum_{k=1}^{3}|\dot{\nu}_{k}-2b_{k}|+|\dot{b}_{1}+\alpha e^{-r_{1}}|
+|\dot{b}_{3}+3\alpha e^{-r_{1}}|+t^{-3}.
        \end{aligned}
    \end{equation*}
    Similar to the above, using the orthogonality conditions $(\varepsilon(t),\partial_{y}Q_{k})=0$ and $(\varepsilon(t),\Lambda_{k}Q_{k})=0$ for any $k\in [\![1,3]\!]$, we obtain 
    \begin{equation}\label{est:para11}
    \begin{aligned}
    \sum_{k=1}^{3}   |\dot{x}_{k}-\nu_{k}|
   + \sum_{k=1}^{3}   |\dot{\nu}_{k}-2b_{k}|
    &\lesssim |\dot{b}_{2}+\alpha e^{-r_{1}}-\alpha e^{-r_{2}}|
       +\frac{C_{0}}{t^{\frac{7}{4}}}
       \\
&+|\dot{b}_{1}+\alpha e^{-r_{1}}|
+|\dot{b}_{3}+3\alpha e^{-r_{1}}|+t^{-3}.
        \end{aligned}
    \end{equation}

    \smallskip
    \textbf{Step 2.} First estimate on $b_{k}$. Using again the equation of $\varepsilon$ in Lemma~\ref{le:eque} and the orthogonality condition $(\varepsilon(t),Q_{1})=0$ in~\eqref{equ:ortho}, we compute 
    \begin{equation*}
    \begin{aligned}
      0=  \frac{\dd }{\dd t}(\varepsilon(t),Q_{1})
      &=\left(\partial_{y}^{2}\varepsilon-\varepsilon+5Q_{1}^{4}\varepsilon
      ,\partial_{y}Q_{1}\right)+(\varepsilon(t),\partial_{t}Q_{1})\\
      &+\left((S+\varepsilon)^{5}-S^{5}-5Q_{1}^{4}\varepsilon,\partial_{y}Q_{1}\right)
       -\left(\Phi(S),Q_{1}\right).
        \end{aligned}
    \end{equation*}
    Based on ${{\mathrm{Ker}}\mathcal{L}}={\rm{Span}}\{Q'\}$ and the orthogonality condition $(\varepsilon(t),\partial_{y}Q_{1})=0$,
    \begin{equation*}
        \left(\partial_{y}^{2}\varepsilon-\varepsilon+5Q_{1}^{4}\varepsilon
      ,\partial_{y}Q_{1}\right)
      =\left(\partial_{y}^{2}\varepsilon-(1+\nu_{1})\varepsilon+5Q_{1}^{4}\varepsilon
      ,\partial_{y}Q_{1}\right)
      =0.
    \end{equation*}
    Then, using again the orthogonality conditions $(\varepsilon(t),\partial_{y}Q_{1})=(\varepsilon(t),\Lambda_{1}Q_{1})=0$, 
    \begin{equation*}
        \partial_{t}Q_{1}=-\dot{x}_{1}\partial_{y}Q_{1}+\frac{\dot{\nu}_{1}}{2(1+\nu_{1})}\Lambda_{1}Q_{1}\Longrightarrow
        (\varepsilon(t),\partial_{t}Q_{1})=0.
    \end{equation*}
    Next, using the definition of $S$ in~\eqref{equ:defS}, we have 
    \begin{equation*}
    \begin{aligned}
      \left|  (S+\varepsilon)^{5}-S^{5}-5Q_{1}^{4}\varepsilon\right|
      &\lesssim \varepsilon^{2}\left(|S|^{3}+|\varepsilon|^{3}\right)+\sum_{k_{1}\ne 1}\sum_{k_{2}=1}^{3}|Q_{k_{1}}||Q_{k_{2}}|^{3}|\varepsilon|\\
      &+\left(|V\phi|+|W|\right)\left(|U|^{3}+|V\phi|^{3}+|W|^{3}\right)|\varepsilon|.
      \end{aligned}
    \end{equation*}
    It follows from Lemma~\ref{le:boundinte}, Lemma~\ref{le:L2e} and the bootstrap setting~\eqref{est:Boot} that 
    \begin{equation*}
    \begin{aligned}
      \left|  \left((S+\varepsilon)^{5}-S^{5}-5Q_{1}^{4}\varepsilon,\partial_{y}Q_{1}\right)\right|&
      \lesssim \mathcal{N}_{A}(\varepsilon)\left(\mathcal{N}_{A}(\varepsilon)+r_{1}e^{-r_{1}}+r_{2}e^{-r_{2}}\right)\\
      &+\mathcal{N}_{A}(\varepsilon)\left(|b_{1}|+|b_{2}|+|b_{3}|\right)
      \lesssim \frac{C_{0}}{t^{\frac{15}{4}}}+\frac{C^{2}_{0}}{t^{\frac{7}{2}}}.
      \end{aligned}
    \end{equation*}
    On the other hand, from~\eqref{equ:defZ},~\eqref{est:Boot}, Corollary~\ref{coro:X} and Lemma~\ref{le:boundinte}, we find 
    \begin{equation*}
        (Z_{1},Q_{1})=(X_{1}\phi,Q_{1})+O\left(t^{-2}\right)
        =-m_{0}^{2}+O\left(t^{-2}\right).
    \end{equation*}
     Therefore, using again~\eqref{equ:defMod}--\eqref{equ:defMQ}, $(Q',Q)=(\Lambda Q,Q)=0$ and Corollary~\ref{coro:inte},
    \begin{equation*}
    \begin{aligned}
&\sum_{k=1}^{3}\left({\vec{\rm{Mod}}}_{k}\cdot\vec{M}_{k}Q,Q_{1}\right)\\
&=\left(-m_{0}^{2}+O\left(
t^{-2}\right)\right)
(\dot{b}_{1}+\alpha e^{-r_{1}})
+O(t^{-2}|\dot{x}_{2}-\nu_{2}|)\\
&+O(t^{-2}|\dot{x}_{3}-\nu_{3}|+t^{-2}|\dot{\nu}_{2}-2b_{2}|+t^{-2}|\dot{\nu}_{3}-2b_{3}|)\\
&+O(t^{-2}|\dot{b}_{2}+\alpha e^{-r_{1}}-\alpha e^{-r_{2}}|
+t^{-2}|\dot{b}_{3}+3\alpha e^{-r_{2}}|
).
\end{aligned}
 \end{equation*}
Combining the above estimate with Definition~\ref{def:Sfunction},~Proposition~\ref{prop:appPhi}, Lemma~\ref{le:PhiTech} and the estimates of $(x_{k},\nu_{k})$ in Step 1, we obtain
 \begin{equation*}
 \begin{aligned}
     \left(\Phi(S),Q_{1}\right)&=\left(-m_{0}^{2}+O\left(
t^{-2}\right)\right)
(\dot{b}_{1}+\alpha e^{-r_{1}})
+O\left(\frac{C_{0}}{t^{\frac{13}{4}}}+\frac{1}{t^{\frac{7}{2}}}\right)
\\
&+O\left(t^{-\frac{1}{2}}|\dot{b}_{2}+\alpha e^{-r_{1}}-\alpha e^{-r_{2}}|
+t^{-\frac{1}{2}}|\dot{b}_{3}+3\alpha e^{-r_{2}}|
\right).
\end{aligned}
 \end{equation*}
 In conclusion, the orthogonality condition $(\varepsilon(t),Q_{1})=0$ gives
 \begin{equation*}
 \begin{aligned}
     |\dot{b}_{1}+\alpha e^{-r_{1}}|
     &\lesssim t^{-\frac{1}{2}}|\dot{b}_{2}+\alpha e^{-r_{1}}-\alpha e^{-r_{2}}|\\
     &+t^{-\frac{1}{2}}|\dot{b}_{3}+3\alpha e^{-r_{2}}|+\frac{C_{0}}{t^{\frac{13}{4}}}+\frac{C^{2}_{0}}{t^{\frac{7}{2}}}.
     \end{aligned}
 \end{equation*}
 Using the other orthogonality conditions $(\varepsilon(t),Q_{2})=(\varepsilon(t),Q_{3})=0$, we obtain 
 \begin{equation}\label{est:parab1}
     \begin{aligned}
          |\dot{b}_{2}+\alpha e^{-r_{1}}-\alpha e^{-r_{2}}| &\lesssim 
     t^{-\frac{1}{2}}|\dot{b}_{3}+3\alpha e^{-r_{2}}|
     +t^{-\frac{1}{2}}|\dot{b}_{1}+\alpha e^{-r_{1}}|\\
     &+t^{-\frac{1}{2}}|\dot{b}_{2}+\alpha e^{-r_{1}}-\alpha e^{-r_{2}}|
     +\frac{C_{0}}{t^{\frac{13}{4}}}+\frac{C^{2}_{0}}{t^{\frac{7}{2}}},\\
      |\dot{b}_{1}+\alpha e^{-r_{1}}|+|\dot{b}_{3}+3\alpha e^{-r_{2}}|&
       \lesssim 
     t^{-\frac{1}{2}}|\dot{b}_{3}+3\alpha e^{-r_{2}}|
     +t^{-\frac{1}{2}}|\dot{b}_{1}+\alpha e^{-r_{1}}|\\
     &+t^{-\frac{1}{2}}|\dot{b}_{2}+\alpha e^{-r_{1}}-\alpha e^{-r_{2}}|
     +\frac{C_{0}}{t^{\frac{13}{4}}}+\frac{C^{2}_{0}}{t^{\frac{7}{2}}}.
     \end{aligned}
 \end{equation}
\textbf{Step 3.} Conclusion. Note that, from~\eqref{est:parab1}, we directly have 
 \begin{equation*}
        \begin{aligned}
        |\dot{b}_{2}+\alpha e^{-r_{1}}-\alpha e^{-r_{2}}|&\lesssim \frac{C_{0}}{t^{\frac{13}{4}}}+\frac{C^{2}_{0}}{t^{\frac{7}{2}}} ,\\
           |\dot{b}_{1}+\alpha e^{-r_{1}}|+|\dot{b}_{3}+3\alpha e^{-r_{2}}|&\lesssim \frac{C_{0}}{t^{\frac{13}{4}}}+\frac{C^{2}_{0}}{t^{\frac{7}{2}}},
           \end{aligned}
        \end{equation*}
        which completes the proof for the estimates in (ii) of Lemma~\ref{le:para1}. Combining the above estimates with~\eqref{est:para11}, we obtain 
        \begin{equation*}
            \begin{aligned}
    \sum_{k=1}^{3}   |\dot{x}_{k}-\nu_{k}|
   + \sum_{k=1}^{3}   |\dot{\nu}_{k}-2b_{k}|
   \lesssim\frac{C_{0}}{t^{\frac{7}{4}}}
+\frac{C_{0}}{t^{\frac{13}{4}}}+\frac{C^{2}_{0}}{t^{\frac{7}{2}}}
   +t^{-3},
        \end{aligned}
        \end{equation*}
        which completes the proof for the estimates in (i) of Lemma~\ref{le:para1}.
\end{proof}

Using the above lemma and the expression of $(\Phi_{1},\Phi_{2},\Phi_{3})$ in Proposition~\ref{prop:appPhi}, we obtain the following weighted $H^{1}$ and $L^{2}$ norm estimate for such error terms.

\begin{lemma}\label{le:Phiweight}
   The following estimates hold.
   \begin{enumerate}
       \item \emph{Weighted $H^{1}$ norm estimate.} We have 
   \begin{equation*}
       \begin{aligned}
            \|\Phi_{1}(S)\|_{\dot{H}^{1}}
            + \left( \int_{\RR}\left|\Phi_{1}(S)\right|^{2}\phi_{1}\dd y\right)^{\frac{1}{2}}\lesssim
            \frac{A^{\frac{1}{2}}}{t^{\frac{11}{4}}},\\
            \|\Phi_{2}(S)\|_{\dot{H}^{1}}
            + \left( \int_{\RR}\left|\Phi_{2}(S)\right|^{2}\phi_{1}\dd y\right)^{\frac{1}{2}}\lesssim
           \frac{A^{\frac{1}{2}}}{t^{\frac{11}{4}}},\\
            \|\Phi_{3}(S)\|_{\dot{H}^{1}}
            + \left( \int_{\RR}\left|\Phi_{3}(S)\right|^{2}\phi_{1}\dd y\right)^{\frac{1}{2}}\lesssim
           \frac{A^{\frac{1}{2}}}{t^{\frac{11}{4}}}.
       \end{aligned}
   \end{equation*}
      \item \emph{$L^{2}$ norm estimate.} We have 
       \begin{equation*}
          \|\Phi_{1}(S)\|_{L^{2}}
          +\|\Phi_{2}(S)\|_{L^{2}}
          +\|\Phi_{3}(S)\|_{L^{2}}
          \lesssim \frac{1}{t^{\frac{5}{2}}}.
       \end{equation*}
   \end{enumerate}
\end{lemma}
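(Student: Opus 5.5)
The plan is to estimate each $\Phi_{j}(S)$ term by term from its explicit expression in Proposition~\ref{prop:appPhi}. The basic split is between terms built from \emph{localized} profiles (in $\mathcal{Y}$, centered at some $x_{k}$) and terms built from the \emph{non-localized} pieces $X_{k}\phi$ (or $X_{k}(\partial_{t}\phi-\partial_{y}\phi)$), which live on $[x_{k},2a^{-1}]$, an interval of length $\sim t$. Throughout, the bootstrap setting~\eqref{est:Boot} gives $e^{-r_{1}}+e^{-r_{2}}\lesssim t^{-3}$ and $|b_{k}|\lesssim t^{-2}$. Lemma~\ref{le:para1} gives $|\dot{x}_{k}-\nu_{k}|\lesssim C_{0}t^{-7/4}$ and $|\dot{b}_{k}|\lesssim t^{-3}$. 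Finally, $a^{-1}\sim t$.

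\textbf{The terms $\Phi_{1},\Phi_{2}$.} The localized terms, such as $\partial_{y}(e^{\mp(y-x_{k})}Q^{4}(y-x_{k}))$ and $Y_{k}$, have $H^{1}$ norm $O(1)$. Multiplied by $e^{-r_{j}}$, they contribute $O(t^{-3})$ to every norm. For the term $e^{-r_{j}}X_{k}\phi$ with $X\in L^{\infty}$, the $L^{2}$ norm is at most $t^{-3}\|\mathbf{1}_{(-\infty,2a^{-1}]}X_{k}\|_{L^{2}}$. Since $X$ decays on the left (Corollary~\ref{coro:X}) and $|x_{k}|\lesssim\log t$, this is $\lesssim t^{-3}\cdot t^{1/2}=t^{-5/2}$, which proves (ii) for these terms.

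For the $\dot{H}^{1}$ part, $\partial_{y}(X_{k}\phi)=X_{k}'\phi+X_{k}\partial_{y}\phi$. Here $X'\in\mathcal{Y}$, and $|\partial_{y}\phi|\lesssim t^{-1}\mathbf{1}_{[t/2,5t/2]}$ has $L^{2}$ norm $\lesssim t^{-1/2}$. Hence $\|\Phi_{j}\|_{\dot H^{1}}\lesssim t^{-3}$.

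For the weighted norm, I need $\int_{-\infty}^{2a^{-1}}\phi_{1}\,\dd y$. Since $\phi_{1}\le1$ and $\phi_{1}(y)=\chi_{1}(y/A-t^{1/2})$ decays like $e^{-(y/A-t^{1/2})}$ for $y\ge A(t^{1/2}+2)$, this integral is bounded by $\int_{-\infty}^{0}(1+|y|)^{c}e^{-|y|}\dd y+At^{1/2}+A\lesssim At^{1/2}$. Here $X_{k}$ is small on the left, $\phi_{1}\le 1$ on $[\,\cdot\,,A(t^{1/2}+2)]$, and the tail of $\phi_{1}$ integrates to $\lesssim A$. Therefore $(\int|e^{-r_{j}}X_{k}\phi|^{2}\phi_{1})^{1/2}\lesssim t^{-3}A^{1/2}t^{1/4}=A^{1/2}t^{-11/4}$. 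This is exactly the claimed rate, so the main gain comes from the weight $\phi_{1}$ cutting the support of $X\phi$ from length $t$ down to length $At^{1/2}$.

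\textbf{The term $\Phi_{3}$.} The terms $b_{k}(\dot{x}_{k}-\nu_{k})\partial_{y}X_{k}$ and $b_{1}(\dot{x}_{2}-\nu_{2})\partial_{y}Y_{2}$, together with the analogous $Y_{3}$ term, are localized. They are bounded in $H^{1}$ by $t^{-2}C_{0}t^{-7/4}$, which is far better than needed once $T_{0}$ is large depending on $C_{0}$. The remaining term is $\theta_{k}X_{k}(\partial_{t}\phi-\partial_{y}\phi)$. By~\eqref{est:dtdyphi}, $|\partial_{t}\phi|+|\partial_{y}\phi|\lesssim (t^{2}|\dot{b}_{1}|+t^{-1})\mathbf{1}_{[t/2,5t/2]}\lesssim t^{-1}\mathbf{1}_{[t/2,5t/2]}$, using $|\dot{b}_{1}|\lesssim t^{-3}$. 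Moreover, $\partial_{y}$ of this term gains another factor $t^{-1}$, because $\partial^{2}_{y}\phi$ and $\partial_{y}\partial_{t}\phi$ are $O(t^{-2})$ on the same set, and $X'$ is exponentially small there. So this term has $L^{2}$ norm $\lesssim t^{-2}t^{-1}t^{1/2}=t^{-5/2}$ and $\dot{H}^{1}$ norm $\lesssim t^{-7/2}$. On $[t/2,5t/2]$ we have $y/A-t^{1/2}\ge t/(2A)-t^{1/2}\gg1$, so $\phi_{1}\lesssim e^{-t/(4A)}$ and the weighted contribution is negligible.

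The step needing care is the weighted estimate for the $X_{k}\phi$ pieces of $\Phi_{1},\Phi_{2}$, specifically the bookkeeping of where $\phi_{1}$ becomes exponentially small relative to the supports of $\phi$ and $\partial\phi$. I expect the rest to be routine applications of Lemma~\ref{le:boundinte}, Corollary~\ref{coro:X} and Lemma~\ref{le:para1}.
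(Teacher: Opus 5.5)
Your proposal is correct and follows the paper's proof essentially step by step. The key inputs in both are $\int_{\RR}|X_k|^2\phi_1\,\dd y\lesssim At^{\frac12}$ for the $e^{-r_j}X_k\phi$ pieces of $\Phi_1,\Phi_2$, and the bounds $|\partial_t\phi-\partial_y\phi|\lesssim t^{-1}\mathbf{1}_{[t/2,5t/2]}$ and $|\partial_y(\partial_t\phi-\partial_y\phi)|\lesssim t^{-2}\mathbf{1}_{[t/2,5t/2]}$ for the cut-off term in $\Phi_3$. The only cosmetic difference is in the weighted norm of $\theta_kX_k(\partial_t\phi-\partial_y\phi)$: you use the exponential smallness of $\phi_1$ on $[t/2,5t/2]$, which needs $t\gtrsim A^2$ and is allowed because $T_0$ may depend on $A$. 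The paper instead reuses the cruder bound $\int\phi_1\lesssim At^{\frac12}$ to get $A^{\frac12}t^{-\frac34}$ before multiplying by $\theta_k\sim t^{-2}$.
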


\begin{proof}
Proof of (i).
    Note that, from Corollary~\ref{coro:X} and the definition of $\phi_{1}$ in~\eqref{equ:defphi1},
    \begin{equation*}
    \begin{aligned}
        \int_{\RR}|X_{1}(y)|^{2}\phi_{1}(y)\dd y
        &\lesssim \int_{-\infty}^{x_{1}}e^{-|y-x_{1}|}\dd y+\int_{x_{1}}^{A(t^{\frac{1}{2}}+2)}1\dd y\\
        &+\int_{A(t^{\frac{1}{2}}+2)}^{\infty}
        \exp \left(-\left(\frac{y}{A}-t^{\frac{1}{2}}\right)\right)\dd y\lesssim
        A t^{\frac{1}{2}},
        \end{aligned}
    \end{equation*}
      \begin{equation*}
    \begin{aligned}
        \int_{\RR}|X_{2}(y)|^{2}\phi_{1}(y)\dd y
        &\lesssim \int_{-\infty}^{x_{2}}e^{-|y-x_{2}|}\dd y+\int_{x_{2}}^{A(t^{\frac{1}{2}}+2)}1\dd y\\
        &+\int_{A(t^{\frac{1}{2}}+2)}^{\infty}
        \exp \left(-\left(\frac{y}{A}-t^{\frac{1}{2}}\right)\right)\dd y\lesssim
        A t^{\frac{1}{2}},
        \end{aligned}
    \end{equation*}
      \begin{equation*}
    \begin{aligned}
        \int_{\RR}|X_{3}(y)|^{2}\phi_{1}(y)\dd y
        &\lesssim \int_{-\infty}^{x_{3}}e^{-|y-x_{3}|}\dd y+\int_{x_{3}}^{A(t^{\frac{1}{2}}+2)}1\dd y\\
        &+\int_{A(t^{\frac{1}{2}}+2)}^{\infty}
        \exp \left(-\left(\frac{y}{A}-t^{\frac{1}{2}}\right)\right)\dd y\lesssim
        A t^{\frac{1}{2}}.
        \end{aligned}
    \end{equation*}
    Similarly, we check that 
    \begin{equation*}
       \sum_{k=1}^{3} \int_{\RR}|X_{k}|^{2}|\partial_{y}\phi|^{2}\dd y\lesssim t^{-2}\int_{\RR}\mathbf{1}_{\left[\frac{t}{2},\frac{5t}{2}\right]}(y)\dd y\lesssim t^{-1}.
    \end{equation*}
    On the other hand, using the definition of $\phi$ in~\eqref{equ:defphi} and Lemma~\ref{le:para1}, we find 
    \begin{equation*}
    \begin{aligned}
        |\partial_{t}\phi-\partial_{y}\phi|&\lesssim 
\left(|\dot{a}|y+t^{-1}\right)\textbf{1}_{\left[\frac{t}{2},\frac{5t}{2}\right]}(y)
        \lesssim
        t^{-1}\textbf{1}_{\left[\frac{t}{2},\frac{5t}{2}\right]}(y),\\
         |\partial_{y}\left(\partial_{t}\phi-\partial_{y}\phi\right)|&\lesssim 
t^{-1}\left(|\dot{a}|y+t^{-1}\right)\textbf{1}_{\left[\frac{t}{2},\frac{5t}{2}\right]}(y)
        \lesssim
        t^{-2}\textbf{1}_{\left[\frac{t}{2},\frac{5t}{2}\right]}(y).
        \end{aligned}
    \end{equation*}
    It follows from Corollary~\ref{coro:X} and the definition of $\phi_{1}$ in~\eqref{equ:defphi1} that 
    \begin{equation*}
    \begin{aligned}
        \sum_{k=1}^{3}\|X_{k}(\partial_{t}\phi-\partial_{y}\phi)\|_{\dot{H}^{1}}&\lesssim t^{-1},\\
        \sum_{k=1}^{3}\left(\int_{\RR}X^{2}_{k}(\partial_{t}\phi-\partial_{y}\phi)^{2}\phi_{1}\dd y\right)^{\frac{1}{2}}&\lesssim A^{\frac{1}{2}}t^{-\frac{3}{4}}.
        \end{aligned}
    \end{equation*}
    Combining the above estimates with Proposition~\ref{prop:appPhi}, Lemma~\ref{le:para1} and the bootstrap setting~\eqref{est:Boot}, we directly complete the proof of the estimates in (i).

    \smallskip
     Proof of (ii). Using again Corollary~\ref{coro:X}, we compute 
    \begin{equation*}
    \begin{aligned}
        \sum_{k=1}^{3}\|X_{k}\phi\|_{L^{2}}&\lesssim 
        \left(\int_{\RR}\textbf{1}_{\left[\frac{t}{2},\frac{5t}{2}\right]}(y)\dd y\right)^{\frac{1}{2}}\lesssim t^{\frac{1}{2}},\\
         \sum_{k=1}^{3}\|X_{k}(\partial_{t}\phi-\partial_{y}\phi)\|_{L^{2}}&\lesssim 
       t^{-1} \left(\int_{\RR}\textbf{1}_{\left[\frac{t}{2},\frac{5t}{2}\right]}(y)\dd y\right)^{\frac{1}{2}}\lesssim t^{-\frac{1}{2}}.
        \end{aligned}
    \end{equation*}
    Combining the above estimates with Proposition~\ref{prop:appPhi}, Lemma~\ref{le:para1} and the bootstrap setting~\eqref{est:Boot}, we directly complete the proof of the estimates in (ii).
\end{proof}

For any $k\in [\![1,3]\!]$, we introduce
\begin{equation}\label{equ:defrhok}
    \rho_{k}(t,y)=\int_{y}^{\infty}(\Lambda Q)(\sigma-x_{k})\dd \sigma\in L^{\infty}\cap C^{\infty}.
\end{equation}
We denote by $\chi_{2}:\RR\to [0,1]$ a increasing smooth function such that 
\begin{equation}\label{equ:defphi2}
    {\chi_{2}}_{|(-\infty,-2)}\equiv 0\ \ 
    \mbox{and}\ \ 
    {\chi_{2}}_{|(-1,\infty)}\equiv 1\Longrightarrow
    \phi_{2}(y)=\chi_{2}\left(\frac{y}{t}\right).
\end{equation}
In addition, we denote 
\begin{equation}\label{equ:defF}
\begin{aligned}
    \mathcal{F}_{k}(t)&=\frac{1}{m_{0}^{2}}\left(\varepsilon(t),{\Omega}_{k}\phi_{2}\right)\ \ \mbox{for any}\  k\in [\![1,3]\!],
    \end{aligned}
\end{equation}
where 
\begin{equation*}
    {\Omega}_{1}=\rho_{1},\ \ {\Omega}_{2}=2\rho_{1}-\rho_{2}\ \ \mbox{and}\ \ 
    {\Omega}_{3}=-2\rho_{1}+2\rho_{2}-\rho_{3}.
\end{equation*}
\begin{lemma}\label{le:refinednu}
    It holds 
    \begin{equation*}
    \begin{aligned}
        \big|\dot{\nu}_{1}-2b_{1}+\dot{\mathcal{F}}_{1}\big|&\lesssim \frac{C_{0}}{t^{\frac{9}{4}}}+\frac{1}{t^{3}},\\
         \big|\dot{\nu}_{2}-2b_{2}+\dot{\mathcal{F}}_{2}\big|&\lesssim 
         \frac{C_{0}}{t^{\frac{9}{4}}}+\frac{1}{t^{3}},
         \\
          \big|\dot{\nu}_{3}-2b_{3}+\dot{\mathcal{F}}_{3}\big|&\lesssim 
          \frac{C_{0}}{t^{\frac{9}{4}}}+\frac{1}{t^{3}}.
        \end{aligned}
    \end{equation*}
\end{lemma}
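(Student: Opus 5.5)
The plan is to differentiate $\mathcal{F}_{k}$ in time using the equation of $\varepsilon$ in Lemma~\ref{le:eque}, and to show that, up to admissible errors, $\dot{\mathcal{F}}_{k}$ equals $-m_{0}^{-2}(\Phi(S),\Omega_{k}\phi_{2})$. The scaling component of ${\vec{\rm{Mod}}}$ inside this pairing then produces exactly $-(\dot{\nu}_{k}-2b_{k})$. The basic identities are
\begin{equation*}
\partial_{y}\rho_{j}=-(\Lambda Q)(\cdot-x_{j}),\qquad \lim_{y\to\infty}\rho_{j}=0,\qquad \lim_{y\to-\infty}\rho_{j}=\int_{\RR}\Lambda Q=-\tfrac12\int_{\RR}Q=-2m_{0}.
\end{equation*}
The convergence is exponential in $y-x_{j}$, and by Proposition~\ref{prop:L}(ii) we have $\mathcal{L}\Lambda Q=-2Q$.

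\textbf{Step 1 (linear part).} Write $\frac{\dd}{\dd t}(\varepsilon,\rho_{j}\phi_{2})=(\partial_{t}\varepsilon,\rho_{j}\phi_{2})+(\varepsilon,\partial_{t}(\rho_{j}\phi_{2}))$.
\begin{itemize}
\item In the first term, integrate by parts in the flux $\partial_{y}(\partial_{y}^{2}\varepsilon-\varepsilon+(S+\varepsilon)^{5}-S^{5})$. The part where the derivative falls on $\rho_{j}$ gives $-(-\partial_{y}^{2}\varepsilon+\varepsilon-5Q_{j}^{4}\varepsilon,\Lambda Q(\cdot-x_{j}))$ plus errors. Replacing $\Lambda Q(\cdot-x_{j})$ by $\Lambda_{j}Q_{j}$ costs $O(\nu\,\mathcal{N}_{A}(\varepsilon))$. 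The main term is then $2(\varepsilon,Q_{j})=0$ by \eqref{equ:ortho}.
\item The remaining errors are as follows. The potentials $5(S^{4}-Q_{j}^{4})$ near $x_{j}$ are $O(t^{-1})$, since $b\sim t^{-2}$ and $e^{-r}\sim t^{-3}$. The nonlinear terms are $O(\|\varepsilon\|_{L^\infty}\mathcal{N}_{A}(\varepsilon))$ by Lemma~\ref{le:L2e}. All of these are $\lesssim C_{0}t^{-11/4}$, using \eqref{est:Naloc} and \eqref{est:Boot}.
\item The terms where the derivative falls on $\phi_{2}$ satisfy $|\partial_{y}^{\le 3}\phi_{2}|\lesssim t^{-1}\mathbf{1}_{[-2t,-t]}$, and there $\rho_{j}\approx-2m_{0}$ and $\phi_{1}\approx 1$. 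They are therefore bounded by $t^{-1}\cdot t^{1/2}\cdot\mathcal{N}_{A}(\varepsilon)\lesssim C_{0}t^{-9/4}$, which is the worst error in the statement.
\item For the second term, use $\partial_{t}\rho_{j}=\dot{x}_{j}\Lambda Q(\cdot-x_{j})$ together with $(\varepsilon,\Lambda_{j}Q_{j})=0$. This gives $O(|\dot x_{j}|\,\nu\,\mathcal{N}_{A})$. Also $|\partial_{t}\phi_{2}|\lesssim t^{-1}\mathbf{1}_{[-2t,-t]}$, which again gives $C_{0}t^{-9/4}$.
\end{itemize}

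\textbf{Step 2 (the $\Phi(S)$ pairing).} Insert Proposition~\ref{prop:appPhi}. The central computation is the $3\times3$ matrix of pairings $(\Lambda_{i}Q_{i},\rho_{j})$:
\begin{itemize}
\item for $i=j$ it is $\frac12(\int\Lambda Q)^{2}+O(t^{-1})=2m_{0}^{2}+O(t^{-1})$;
\item for $i<j$ it is $O(t^{-1})$, since $\rho_{j}\approx 0$ to the right of $x_{j}$ fails only at the left, and at $x_{i}<x_{j}$ we have $\rho_{j}\approx-2m_{0}$ — so this case must be checked carefully;
\item for $i>j$, where $\rho_{j}\approx 0$ near $x_{i}$, it is $O(t^{-1})$;
\item for $i<j$, where $\rho_{j}\approx-2m_{0}$ near $x_{i}$, it is $-2m_{0}\int\Lambda Q=4m_{0}^{2}$.
\end{itemize}
I will fix the orientation by direct computation. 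The combinations $\Omega_{1}=\rho_{1}$, $\Omega_{2}=2\rho_{1}-\rho_{2}$, $\Omega_{3}=-2\rho_{1}+2\rho_{2}-\rho_{3}$ are designed precisely so that, after including the signs $\sigma_{i}$ and the factor $\frac{1}{2(1+\nu_{i})}$, the pairing $(\sum_{i}{\vec{\rm{Mod}}}_{i}\cdot\vec M_{i}Q,\Omega_{k}\phi_{2})$ has scaling part equal to $m_{0}^{2}(\dot\nu_{k}-2b_{k})$, with the off-diagonal contributions cancelling. Verifying this triangular cancellation is routine arithmetic.

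For the other pairings:
\begin{itemize}
\item The translation components satisfy $(Q',\rho)=(Q,\Lambda Q)=0$ up to $O(t^{-1})$. Lemma~\ref{le:para1}(i) then gives $\lesssim C_{0}t^{-11/4}$.
\item The $Z_{i}$ components pair with a bounded function over a set of length $\lesssim t$. Lemma~\ref{le:para1}(ii) then gives $t\cdot C_{0}t^{-13/4}$.
\item The $O_{\mathcal S}(1)$ term is controlled by $\|f\|_{L^{2}}\|\Omega_{k}\phi_{2}\|_{L^{2}}\lesssim t^{1/2}\|f\|_{H^{1}}$, which is acceptable.
\end{itemize}

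\textbf{Main obstacle: the $\Phi_{1},\Phi_{2},\Phi_{3}$ pairings.} Naively $(X_{i}\phi,\Omega_{k}\phi_{2})$ could be of size $t$, since $X_{i}$ is non-localized. This would give $e^{-r}t\sim t^{-2}$, which is too large. The way out is that $X_{i}$ has its tail only on the right, where every $\rho_{j}$ decays exponentially. Hence $(X_{i}\phi,\rho_{j}\phi_{2})=O(\log^{C}t)$, and the $\Phi_{1},\Phi_{2}$ pairings are $O(e^{-r}\log^{C}t)\lesssim t^{-3+}$. For the terms in $\Phi_{3}$ the same observation applies: $\partial_{t}\phi-\partial_{y}\phi$ is supported in $[t/2,5t/2]$, where $\rho_{j}=O(e^{-t/4})$. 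Together with the factor $b_{k}|\dot x_{k}-\nu_{k}|$ multiplying $\partial_{y}X_{k}$, this gives $\lesssim t^{-3}$. I expect these estimates, together with the bookkeeping of the triangular cancellation in Step 2, to be the only delicate points. Collecting Steps 1 and 2 and dividing by $m_{0}^{2}$ gives the three inequalities.
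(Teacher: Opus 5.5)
Your proposal is correct and follows essentially the same route as the paper. You differentiate $\mathcal{F}_k$ using Lemma~\ref{le:eque} and kill the linear term via $\mathcal{L}\Lambda Q=-2Q$ and $(\varepsilon,Q_k)=0$. You bound the $\phi_2$-cutoff errors by $t^{-1/2}\mathcal{N}_A(\varepsilon)$ and reduce to the pairing $(\Phi(S),\Omega_k\phi_2)$ (Corollary~\ref{coro:Phirho}), using the triangular matrix of $(\Lambda_iQ_i,\rho_j\phi_2)$ and the $O(\log t)$ bound from the one-sided tail of $X$ (Lemma~\ref{le:estrho1}).

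Your list of matrix entries contains a stray bullet claiming $O(t^{-1})$ for $i<j$, which contradicts what you write next. The entries you finally state — $O(t^{-1})$ for $i>j$, $4m_0^2$ for $i<j$, $2m_0^2$ for $i=j$ — are the right ones, and they do give the required cancellation for $\Omega_1$, $\Omega_2$ and $\Omega_3$.
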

We mention here that, the above lemma is inspired by~\cite[(iv) of Lemma 2.7]{MMRACTA} for the study of blow-up dynamics close to a single soliton.
To complete the proof of Lemma~\ref{le:refinednu}, we first need the following technical estimates.
\begin{lemma}\label{le:estrho1}
    The following estimates hold.
    \begin{enumerate}
\item \emph{Estimates related to $\partial_{y}Q_{k}$}. We have 
 \begin{equation*}
        \begin{aligned}
            \left|\left(\partial_{y}Q_{1},\rho_{1}\phi_{2}\right)\right|+ \left|\left(\partial_{y}Q_{1},\rho_{2}\phi_{2}\right)\right|+ \left|\left(\partial_{y}Q_{1},\rho_{3}\phi_{2}\right)\right|\lesssim t^{-1},\\
            \left|\left(\partial_{y}Q_{2},\rho_{1}\phi_{2}\right)\right|+ \left|\left(\partial_{y}Q_{2},\rho_{2}\phi_{2}\right)\right|+ \left|\left(\partial_{y}Q_{2},\rho_{3}\phi_{2}\right)\right|\lesssim t^{-1},\\
            \left|\left(\partial_{y}Q_{3},\rho_{1}\phi_{2}\right)\right|+ \left|\left(\partial_{y}Q_{3},\rho_{2}\phi_{2}\right)\right|+ \left|\left(\partial_{y}Q_{3},\rho_{3}\phi_{2}\right)\right|\lesssim t^{-1}.
            \end{aligned}
        \end{equation*}
    
        \item \emph{Estimates related to $\Lambda_{k}Q_{k}$}. We have 
        \begin{equation*}
        \begin{aligned}
            \left(\Lambda_{2}Q_{2},\rho_{1}\phi_{2}\right)=(\Lambda_{3}Q_{3},\rho_{1}\phi_{2})
            =(\Lambda_{3}Q_{3},\rho_{2}\phi_{2})
            &=O\left(t^{-1}\right),\\
              \left(\Lambda_{1}Q_{1},\rho_{2}\phi_{2}\right)=(\Lambda_{1}Q_{1},\rho_{3}\phi_{2})
            =(\Lambda_{2}Q_{2},\rho_{3}\phi_{2})
            &=4m_{0}^{2}+O\left(t^{-1}\right),\\
             \left(\Lambda_{1}Q_{1},\rho_{1}\phi_{2}\right)=(\Lambda_{2}Q_{2},\rho_{2}\phi_{2})
            =(\Lambda_{3}Q_{3},\rho_{3}\phi_{2})
            &=2m_{0}^{2}+O\left(t^{-1}\right).
            \end{aligned}
        \end{equation*}
        \item \emph{Estimate related to $Z_{k}$.} We have 
        \begin{equation*}
        \begin{aligned}
            \left|\left(Z_{1},\rho_{1}\phi_{2}\right)\right|+ \left|\left(Z_{1},\rho_{2}\phi_{2}\right)\right|+ \left|\left(Z_{1},\rho_{3}\phi_{2}\right)\right|\lesssim \log t,\\
            \left|\left(Z_{2},\rho_{1}\phi_{2}\right)\right|+ \left|\left(Z_{2},\rho_{2}\phi_{2}\right)\right|+ \left|\left(Z_{2},\rho_{3}\phi_{2}\right)\right|\lesssim \log t,\\
            \left|\left(Z_{3},\rho_{1}\phi_{2}\right)\right|+ \left|\left(Z_{3},\rho_{2}\phi_{2}\right)\right|+ \left|\left(Z_{3},\rho_{3}\phi_{2}\right)\right|\lesssim \log t.
            \end{aligned}
        \end{equation*}

        \item \emph{Estimate related to $\Phi_{k}$.} We have 
          \begin{equation*}
        \begin{aligned}
            \left|\left(\Phi_{1},\rho_{1}\phi_{2}\right)\right|+ \left|\left(\Phi_{1},\rho_{2}\phi_{2}\right)\right|+ \left|\left(\Phi_{1},\rho_{3}\phi_{2}\right)\right|\lesssim t^{-3}\log t,\\
            \left|\left(\Phi_{2},\rho_{1}\phi_{2}\right)\right|+ \left|\left(\Phi_{2},\rho_{2}\phi_{2}\right)\right|+ \left|\left(\Phi_{2},\rho_{3}\phi_{2}\right)\right|\lesssim t^{-3}\log t,\\
            \left|\left(\Phi_{3},\rho_{1}\phi_{2}\right)\right|+ \left|\left(\Phi_{3},\rho_{2}\phi_{2}\right)\right|+ \left|\left(\Phi_{3},\rho_{3}\phi_{2}\right)\right|\lesssim t^{-3}\log t.
            \end{aligned}
        \end{equation*}
    \end{enumerate}
\end{lemma}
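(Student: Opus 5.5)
The plan is to reduce every pairing to elementary integrals, using three ingredients: the structure of $\rho_k$, the cut-off $\phi_2$, and the bootstrap bounds \eqref{est:Boot}.

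\textbf{Structure of $\rho_k$ and $\phi_2$.} Since $\Lambda Q\in\mathcal{Y}$ and $\int_{\RR}\Lambda Q=-\frac12\int_{\RR} Q=-2m_0$, the function $\rho_k$ is bounded. It is exponentially small for $y-x_k\to+\infty$, and it equals $-2m_0+O(e^{-|y-x_k|})$ for $y-x_k\to-\infty$. Moreover $\partial_y\rho_k=-(\Lambda Q)(\cdot-x_k)$. The cut-off $\phi_2$ equals $1$ on $[-t,\infty)$, which contains all $x_k\sim -c\log t$, and $|\partial_y\phi_2|\lesssim t^{-1}\mathbf{1}_{[-2t,-t]}$. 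The centered profiles $Q_k,\partial_yQ_k,\Lambda_kQ_k$ are exponentially small on $\{\phi_2\neq1\}$. Hence in (i)--(ii) we may drop $\phi_2$ at cost $O(e^{-ct})$. We may also replace $\Theta_k$ by the unscaled profile at cost $O(\nu_k)=O(t^{-1})$ via \eqref{equ:Taylornu}.

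\textbf{Items (i) and (ii).} For (i), integration by parts gives $(\partial_yQ_j,\rho_k)=(Q(\cdot-x_j),(\Lambda Q)(\cdot-x_k))$. For $j=k$ this vanishes since $(Q,\Lambda Q)=0$. For $j\neq k$ it is $O(R e^{-r})=O(t^{-3}\log t)$ by Lemma~\ref{le:boundinte}.

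For (ii), take $j=k$ first. Integration by parts gives
\[
(\Lambda Q,\textstyle\int_y^\infty\Lambda Q)=\tfrac12\big(\int_{\RR}\Lambda Q\big)^2=2m_0^2 .
\]
If $x_j<x_k$, then $\rho_k\approx-2m_0$ on the bulk of $\Lambda Q_j$, so the pairing is $-2m_0\int\Lambda Q=4m_0^2$, up to the overlap error $O(Re^{-r})$. If $x_j>x_k$, then $\rho_k$ is exponentially small there and the pairing is $O(t^{-1})$. This matches exactly the three lines of (ii).

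\textbf{Item (iii).} $Z_k$ is $X_k\phi$ plus localized $Y$ terms, which contribute $O(1)$. The non-localized part is handled by splitting $\RR$ at $x_k$. On $(-\infty,x_k]$, $X_k$ decays exponentially, so the contribution is $O(1)$. On $[x_k,\infty)$, $X_k$ is bounded and $\rho_\ell$ decays like $e^{-(y-x_\ell)}$ to the right of $x_\ell$. The product is therefore integrable, with at most $|x_k-x_\ell|\lesssim\log t$ length where $\rho_\ell=O(1)$. This gives $\lesssim\log t$, and the cutoffs $\phi,\phi_2$ do not enlarge it.

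\textbf{Item (iv), the main obstacle.} Use the explicit expressions of $\Phi_1,\Phi_2,\Phi_3$ from Proposition~\ref{prop:appPhi}. In $\Phi_1$ and $\Phi_2$ the coefficients are $e^{-r_k}\lesssim t^{-3}$. The localized pieces paired with bounded $\rho_\ell\phi_2$ give $O(1)$. The pieces $X_k\phi$ give $O(\log t)$ by the argument of (iii). Hence $|(\Phi_{1,2},\rho_\ell\phi_2)|\lesssim t^{-3}\log t$.

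For $\Phi_3$, the terms with factor $(\dot x_k-\nu_k)b_k$ are bounded by $C_0t^{-7/4}t^{-2}$, using Lemma~\ref{le:para1} (i) and $(\partial_yX_k,\rho_\ell)=O(1)$. This is $\ll t^{-3}$ once $T_0$ is large. The delicate term is
\[
\theta_kX_k(\partial_t\phi-\partial_y\phi).
\]
It is supported in $[t/2,5t/2]$, and its size there is $t^{-2}\cdot t^{-1}$. There $\rho_\ell$ is $O(e^{-t/4})$ because $y-x_\ell\gtrsim t$, so this term is exponentially small and not an obstacle after all.

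The real care is needed in checking that no estimate loses a factor of $t$, namely that every non-localized pairing is confined to where $\rho_\ell$ is not exponentially small, a region of length $\lesssim\log t$. I would verify this bookkeeping term by term, using $|\partial_t\phi-\partial_y\phi|\lesssim t^{-1}\mathbf 1_{[t/2,5t/2]}$ from the proof of Lemma~\ref{le:Phiweight}.
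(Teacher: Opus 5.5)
Your proposal is correct and follows essentially the same route as the paper. That route is integration by parts with $(Q,\Lambda Q)=0$ for (i); the asymptotics $\rho\to 0$ on the right, $\rho\to -2m_0$ on the left, and $(\Lambda Q,\rho)=\frac12\bigl(\int\Lambda Q\bigr)^2=2m_0^2$ for (ii); and, for (iii)--(iv), isolating a region of length $O(\log t)$ where $X_k$ and $\rho_\ell$ are both $O(1)$. The only minor deviation is your observation that $\theta_kX_k(\partial_t\phi-\partial_y\phi)$ is exponentially small, because its support lies where $\rho_\ell$ decays; the paper simply bounds that pairing by $t^{-2}\cdot t^{-1}\log t$, and either suffices.
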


\begin{proof}
Proof of (i). Note that, using integration by parts, we find 
\begin{equation*}
\begin{aligned}
    \left(\partial_{y}Q_{1},\rho_{1}\phi_{2}\right)&=\left(Q_{1},\phi_{2}(\Lambda Q)(\cdot-x_{1})\right)-\left(Q_{1},\rho_{1}\partial_{y}\phi_{2}\right),\\
     \left(\partial_{y}Q_{1},\rho_{2}\phi_{2}\right)&=\left(Q_{1},\phi_{2}(\Lambda Q)(\cdot-x_{2})\right)-\left(Q_{1},\rho_{2}\partial_{y}\phi_{2}\right),\\
      \left(\partial_{y}Q_{1},\rho_{3}\phi_{2}\right)&=\left(Q_{1},\phi_{2}(\Lambda Q)(\cdot-x_{3})\right)-\left(Q_{1},\rho_{3}\partial_{y}\phi_{2}\right).
    \end{aligned}
\end{equation*}
Note also that, from the definition of~\eqref{equ:defphi2}, we have 
\begin{equation*}
    |\partial_{y}\phi_{2}(y)|
    \lesssim t^{-1}\left|\chi_{2}'\left(\frac{y}{t}\right)\right|
    \lesssim t^{-1}\textbf{1}_{[-2t,-t]}(y),\quad \mbox{on}\ \RR.
\end{equation*}
It follows from the bootstrap setting~\eqref{est:Boot} and $(\Lambda Q,Q)=0$ that 
\begin{equation*}
    \left| \left(\partial_{y}Q_{1},\rho_{1}\phi_{2}\right)\right|\lesssim |\nu_{1}|+\int_{\RR}Q_{1}\left(|\partial_{y}\phi_{2}|+|1-\phi_{2}|\right)\dd y\lesssim t^{-1}.
\end{equation*}
On the other hand, using the bootstrap setting~\eqref{est:Boot} and Lemma~\ref{le:boundinte},
\begin{equation*}
\begin{aligned}
      \left|\left(\partial_{y}Q_{1},\rho_{2}\phi_{2}\right)\right|+  \left|\left(\partial_{y}Q_{1},\rho_{3}\phi_{2}\right)\right|
      &\lesssim\int_{\RR}
      Q_{1}\left(|\partial_{y}\phi_{2}|+|1-\phi_{2}|\right)\dd y\\
      & +r_{2}^{2} e^{-r_{2}}+\left(r_{2}^{2}+r_{3}^{2}\right)e^{-(r_{2}+r_{3})}\lesssim t^{-1}.
      \end{aligned}
\end{equation*}
Combining the above estimates, we complete the proof of the estimates in the first line. By a similar argument, we also obtain the remaining estimates in (i).

\smallskip
    Proof of (ii). To simplify the notation, we introduce 
    \begin{equation*}
        \rho(y)=\int_{y}^{\infty}\Lambda Q(\sigma)\dd \sigma
        \Longrightarrow \rho_{k}(y)=\rho(y-x_{k}),\ \  \mbox{for any}\ k\in [\![1,3]\!].
    \end{equation*}
    First, from the exponential decay of $\Lambda Q$, we check that
\begin{equation*}
    |\rho(y)|\textbf{1}_{(0,\infty)}(y)+|\rho'(y)|\lesssim (1+|y|)e^{-|y|}.
\end{equation*}
Then, using again ~\eqref{equ:Taylornu} and the bootstrap setting~\eqref{est:Boot}, we rewrite 
\begin{equation}\label{est:LambdaQk}
   \begin{aligned}
       (\Lambda_{1}Q_{1})(y)=(\Lambda Q)(y-x_{1})+O\left(t^{-1}e^{-\frac{1}{2}|y-x_{1}|}\right),\\
       ( \Lambda_{2}Q_{2})(y)=(\Lambda Q)(y-x_{2})+O\left(t^{-1}e^{-\frac{1}{2}|y-x_{2}|}\right),\\
         (\Lambda_{3}Q_{3})(y)=(\Lambda Q)(y-x_{3})+O\left(t^{-1}e^{-\frac{1}{2}|y-x_{3}|}\right).
   \end{aligned}
\end{equation}
Combining the above estimates with (ii) of Lemma~\ref{le:boundinte} and the bootstrap setting~\eqref{est:Boot}, we complete the proof of the estimates in the first line.

\smallskip
On the other hand, from the definition of $m_{0}$ in~\eqref{equ:defm0}, we have
\begin{equation}\label{equ:inteLambdaQ}
    \int_{\RR} \Lambda Q(y)\dd y=\frac{1}{2}\int_{\RR} Q(y)\dd y+\int_{\RR} yQ'(y)\dd y=-2m_{0},
\end{equation}
which implies that 
\begin{equation*}
    |\rho(y)+2m_{0}|\textbf{1}_{(-\infty,0]}(y)+|(\rho(y)+2m_{0})'|\lesssim (1+|y|)e^{-|y|}.
\end{equation*}
Combining the above estimate with~\eqref{est:LambdaQk}, (i) of Lemma~\ref{le:boundinte} and the Bootstrap setting~\eqref{est:Boot}, we obtain
\begin{equation*}
     \left(\Lambda_{1}Q_{1},\rho_{2}+2m_{0}\right)=(\Lambda_{1}Q_{1},\rho_{3}+2m_{0})
            =(\Lambda_{2}Q_{2},\rho_{3}+2m_{0})=O\left(t^{-1}\right),
\end{equation*}
which directly completes the proof of the estimates in the second line.

\smallskip
Last, using again~\eqref{est:LambdaQk} and~\eqref{equ:inteLambdaQ}, for any $k\in [\![1,3]\!]$, we compute 
\begin{equation*}
\begin{aligned}
    \left(\Lambda_{k}Q_{k},\rho_{k}\right)
    &=\int_{\RR}\Lambda Q(y)\int_{y}^{\infty}\Lambda Q(\sigma)\dd \sigma \dd y+O\left(t^{-1}\right)\\
    &=\frac{1}{2}\left(\int_{\RR}\Lambda Q(y)\dd y\right)^{2}+O\left(t^{-1}\right)=2m_{0}^{2}+O\left(t^{-1}\right),
    \end{aligned}
\end{equation*}
which directly completes the proof of the estimates in the third line.

\smallskip
Proof of (iii). Note that, from the bootstrap setting~\eqref{est:Boot}, for any $y\in (0,\infty)$, 
\begin{equation*}
    \sum_{k=1}^{3}|\rho_{k}|\phi_{2}\lesssim \sum_{k=1}^{3}e^{-\frac{1}{2}|y-x_{k}|}\lesssim e^{-\frac{1}{2}|y|}.
\end{equation*}
It follows directly from~\eqref{equ:defZ} that 
\begin{equation*}
\sum_{k_{1}=1}^{3}\sum_{k_{2}=1}^{3}\int_{0}^{\infty}|Z_{k_{1}}||\rho_{k_{2}}|\phi_{2}\dd y\lesssim \int_{0}^{\infty}e^{-\frac{1}{2}|y|}\dd y\lesssim 1.
\end{equation*}
Note also that, for $y\in (-20\log t,0)$, we find 
\begin{equation*}
    \sum_{k=1}^{3}|\rho_{k}|\phi_{2}\lesssim 1\Longrightarrow
    \sum_{k_{1}=1}^{3}\sum_{k_{2}=1}^{3}\int_{-20\log t}^{0}|Z_{k_{1}}||\rho_{k_{2}}|\phi_{2}\dd y\lesssim \log t.
\end{equation*}
Last, from Corollary~\ref{coro:X}, for $y\in (-\infty,-20\log t)$, we have 
\begin{equation*}
    \sum_{k=1}^{3}|X_{k}|\lesssim e^{-\frac{1}{2}|y-x_{k}|}\Longrightarrow
    \sum_{k_{1}=1}^{3}\sum_{k_{2}=1}^{3}\int^{-20\log t}_{-\infty}|Z_{k_{1}}||\rho_{k_{2}}|\phi_{2}\dd y\lesssim 1.
\end{equation*}
Combining the above estimates, we complete the proof of the estimates in (iii).

\smallskip
Proof of (iv). Using a similar argument as in the proof of (iii), we find 
\begin{equation*}
\begin{aligned}
    \sum_{k_{1}=1}^{3}\sum_{k_{2}=1}^{3}\int_{\RR}|X_{k_{1}}||\rho_{k_{2}}|\phi \phi_{2}\dd y&\lesssim \log t,\\
    \sum_{k_{1}=1}^{3}\sum_{k_{2}=1}^{3}\int_{\RR}|X_{k_{1}}||\rho_{k_{2}}||\partial_{t}\phi-\partial_{y}\phi|\phi_{2}\dd y&\lesssim t^{-1}\log t.
    \end{aligned}
\end{equation*}
Therefore, from the bootstrap setting~\eqref{est:Boot}, the expression of $(\Phi_{1},\Phi_{2},\Phi_{3})$ in Proposition~\ref{prop:appPhi} and Lemma~\ref{le:para1}, we complete the proof of the estimates in (iv).
\end{proof}

\begin{corollary}\label{coro:Phirho}
    It holds 
    \begin{equation*}
    \begin{aligned}
        \left(\Phi(S),\rho_{1}\phi_{2}\right)=m_{0}^{2}(\dot{\nu}_{1}-2b_{1})+O\left(\frac{C_{0}}{t^{\frac{11}{4}}}\right),\\
         \left(\Phi(S),(2\rho_{1}-\rho_{2})\phi_{2}\right)=m_{0}^{2}(\dot{\nu}_{2}-2b_{2})+O\left(\frac{C_{0}}{t^{\frac{11}{4}}}\right),\\
          \left(\Phi(S),\left(-2\rho_{1}+2\rho_{2}-\rho_{3}\right)\phi_{2}\right)=m_{0}^{2}(\dot{\nu}_{3}-2b_{3})+O\left(\frac{C_{0}}{t^{\frac{11}{4}}}\right).
        \end{aligned}
    \end{equation*}
\end{corollary}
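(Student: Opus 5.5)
We expand $\Phi(S)$ with \eqref{equ:defPhi} from Proposition~\ref{prop:appPhi}: $\Phi(S)=\sum_{k}\vec{\rm Mod}_{k}\cdot\vec M_{k}Q+\sum_{k}\Phi_{k}(S)+f$ with $f=O_{\mathcal S}(1)$. We then pair each piece against $\rho_{j}\phi_{2}$ for $j\in[\![1,3]\!]$ and take the linear combinations $\Omega_{1},\Omega_{2},\Omega_{3}$.

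\emph{Error terms.} The contribution of $\Phi_{1},\Phi_{2},\Phi_{3}$ is $O(t^{-3}\log t)$ by (iv) of Lemma~\ref{le:estrho1}. For $f\in\mathcal S$ we need a bound adapted to the weight. Since $\rho_{j}\phi_{2}$ is bounded and supported in $[-2t,\infty)$, and decays exponentially to the right of $x_{j}$, we have $\|\rho_{j}\phi_{2}\|_{L^{2}}\lesssim t^{1/2}$. Cauchy--Schwarz then gives $|(f,\rho_{j}\phi_{2})|\lesssim t^{1/2}\|f\|_{H^{1}}$. By Definition~\ref{def:Sfunction} and Lemma~\ref{le:para1} (which gives $|\dot x_{k}|\lesssim t^{-1}$, $|\dot\nu_{k}|\lesssim t^{-2}$ and $|\dot b_{k}|\lesssim t^{-3}$ via \eqref{est:Boot}), this is $\lesssim t^{1/2}\cdot t^{-7/2}=t^{-3}$. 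If the definition of $\mathcal S$ hides terms with worse localization, we would instead return to their explicit form, as in Lemma~\ref{le:estrho1}(iv).

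\emph{Modulation terms.} By Lemma~\ref{le:para1}(i), $|\dot x_{k}-\nu_{k}|\lesssim C_{0}t^{-7/4}$, and Lemma~\ref{le:estrho1}(i) gives $(\partial_{y}Q_{k},\rho_{j}\phi_{2})=O(t^{-1})$. So the $x$-modulation contributes $O(C_{0}t^{-11/4})$. By Lemma~\ref{le:para1}(ii), $|\dot b_{k}+\dots|\lesssim C_{0}t^{-13/4}+C_{0}^{2}t^{-7/2}$, and Lemma~\ref{le:estrho1}(iii) bounds the $Z_{k}$ pairings by $\log t$. So the $b$-modulation contributes $O(C_{0}t^{-13/4}\log t)\le O(C_{0}t^{-11/4})$, taking $T_{0}$ large depending on $C_{0}$.

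\emph{Scaling terms and the triangular system.} The remaining terms are
\[
\sum_{k}\sigma_{k}\frac{\dot\nu_{k}-2b_{k}}{2(1+\nu_{k})}(\Lambda_{k}Q_{k},\rho_{j}\phi_{2}).
\]
The prefactor is $\frac12(\dot\nu_{k}-2b_{k})(1+O(t^{-1}))$, and $|\dot\nu_{k}-2b_{k}|\lesssim C_{0}t^{-7/4}$, so the $O(t^{-1})$ corrections in Lemma~\ref{le:estrho1}(ii) cost only $O(C_{0}t^{-11/4})$. Set $D_{k}=\dot\nu_{k}-2b_{k}$. Using the leading values $2m_{0}^{2}$ on the diagonal, $4m_{0}^{2}$ for $j>k$ and $0$ for $j<k$, we get
\[
(\Phi(S),\rho_{j}\phi_{2})=m_{0}^{2}\Big(\sigma_{j}D_{j}+2\sum_{k<j}\sigma_{k}D_{k}\Big)+O(C_{0}t^{-11/4}).
\]
With $\vec\sigma=(1,-1,-1)$ this reads
\[
\rho_{1}: m_{0}^{2}D_{1},\qquad \rho_{2}: m_{0}^{2}(2D_{1}-D_{2}),\qquad \rho_{3}: m_{0}^{2}(2D_{1}-2D_{2}-D_{3}).
\]
The combination $2\rho_{1}-\rho_{2}$ therefore yields $m_{0}^{2}D_{2}$. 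The combination $-2\rho_{1}+2\rho_{2}-\rho_{3}$ yields $m_{0}^{2}\big(-2D_{1}+4D_{1}-2D_{2}-2D_{1}+2D_{2}+D_{3}\big)=m_{0}^{2}D_{3}$. Summing all error terms gives the three identities.

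\emph{Main obstacle.} The delicate step is the $\mathcal S$ remainder. The crude $t^{1/2}$ loss from the nonlocalized weight must still land at $O(t^{-3})$, and this requires confirming that every piece of $O_{\mathcal S}(1)$ is actually bounded in $L^{2}$ (not only in a localized norm).
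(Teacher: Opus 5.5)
Your proposal follows the paper's proof: you expand $\Phi(S)$ via Proposition~\ref{prop:appPhi}, reduce with Lemma~\ref{le:estrho1}(i)--(iv) and Lemma~\ref{le:para1} to the scaling pairings ($2m_0^2$ for $k=j$, $4m_0^2$ for $k<j$, $O(t^{-1})$ for $k>j$), bound the $\mathcal S$-remainder by Cauchy--Schwarz with $\|\rho_j\phi_2\|_{L^2}\lesssim t^{1/2}$, and your linear algebra for the combinations $\Omega_k$ is correct. One correction: Lemma~\ref{le:para1} gives only $|\dot\nu_k|\le 2|b_k|+|\dot\nu_k-2b_k|\lesssim C_0t^{-7/4}$, not $t^{-2}$, so the $\mathcal S$-remainder is $O(C_0t^{-11/4})$ rather than $O(t^{-3})$ (this is the paper's bound and still fits the stated error), and your $L^2$ worry is moot because Definition~\ref{def:Sfunction} is already a global $H^1$ bound.
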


\begin{proof}
    First, from~\eqref{equ:defMod}--\eqref{equ:defMQ},~\eqref{est:Boot} and (i)--(iii) of Lemma~\ref{le:estrho1}, we find
    \begin{equation*}
    \begin{aligned}
    &\sum_{k=1}^{3}\left({\vec{\rm{Mod}}}_{k}\cdot\vec{M}_{k}Q,\rho_{1}\phi_{2}\right)\\
    &=m_{0}^{2}\left(\dot{\nu}_{1}-2b_{1}\right)+O\bigg(t^{-1}\sum_{k=1}^{3}|\dot{x}_{k}-\nu_{k}|+t^{-1}\sum_{k=1}^{3}|\dot{\nu}_{k}-2b_{k}|\bigg)
        \\
        &+O\left(\log t\left(|\dot{b}_{1}+\alpha e^{-r_{1}}|
        +|\dot{b}_{2}+\alpha e^{-r_{1}}-\alpha e^{-r_{2}}|
        +|\dot{b}_{3}+3\alpha e^{-r_{2}}|\right)\right).
        \end{aligned}
    \end{equation*}
    It follows directly from Lemma~\ref{le:para1} that 
    \begin{equation*}
        \sum_{k=1}^{3}\left({\vec{\rm{Mod}}}_{k}\cdot\vec{M}_{k}Q,\rho_{1}\phi_{2}\right)
        =m_{0}^{2}\left(\dot{\nu}_{1}-2b_{1}\right)+O\left(\frac{C_{0}}{t^{\frac{11}{4}}}\right).
    \end{equation*}
    Then, for any $f\in \mathcal{S}$, from the bootstrap setting~\eqref{est:Boot} and Lemma~\ref{le:para1}, we find\footnote{We refer to Definition~\ref{def:Sfunction} for the definition of $\mathcal{S}$.}
    \begin{equation*}
        \left|(f,\rho_{1}\phi_{2})\right|\lesssim \|f\|_{L^{2}}\|\rho_{1}\phi_{2}\|_{L^{2}}\lesssim \frac{C_{0}}{t^{\frac{11}{4}}}+\frac{C_{0}^{2}}{t^{3}}.
    \end{equation*}
    Combining the above estimates with (iv) of Lemma~\ref{le:estrho1}, we complete the proof of the estimate in the first line of Corollary~\ref{coro:Phirho}.

    \smallskip
    Second, using again~\eqref{equ:defMod}--\eqref{equ:defMQ},~\eqref{est:Boot}, Lemma~\ref{le:para1} and Lemma~\ref{le:estrho1}, we obtain
    \begin{equation*}
    \begin{aligned}
    \sum_{k=1}^{3}\left({\vec{\rm{Mod}}}_{k}\cdot\vec{M}_{k}Q,\left(2\rho_{1}-\rho_{2}\right)\phi_{2}\right)&=m_{0}^{2}(\dot{\nu}_{2}-2b_{2})+O\left(\frac{C_{0}}{t^{\frac{11}{4}}}\right),\\
     \sum_{k=1}^{3}\left({\vec{\rm{Mod}}}_{k}\cdot\vec{M}_{k}Q,\left(-2\rho_{1}+2\rho_{2}-\rho_{3}\right)\phi_{2}\right)&=m_{0}^{2}(\dot{\nu}_{3}-2b_{3})+O\left(\frac{C_{0}}{t^{\frac{11}{4}}}\right).
    \end{aligned}
    \end{equation*}
    Therefore, using a similar argument to that in the above argument, we complete the proof for the remaining estimate in Corollary~\ref{coro:Phirho}.
\end{proof}

We are in a position to complete the proof of Lemma~\ref{le:refinednu}.

\begin{proof}
    [Proof of Lemma~\ref{le:refinednu}]
    First, from the definition of $\mathcal{F}_{1}$ in~\eqref{equ:defF}, we compute 
    \begin{equation*}
    \begin{aligned}
        {\dot{\mathcal{F}}}_{1}
        &=\frac{\dot{x}_{1}}{m_{0}^{2}}\left(\varepsilon(t),\left(\Lambda Q(\cdot-x_{1})\right)\phi_{2}\right)\\
        &+\frac{1}{m_{0}^{2}}\left(\partial_{t}\varepsilon(t),\rho_{1}\phi_{2}\right)+\frac{1}{m_{0}^{2}}\left(\varepsilon(t),\rho_{1}\partial_{t}\phi_{2}\right).
        \end{aligned}
    \end{equation*}
    Using the bootstrap setting~\eqref{est:Boot},~\eqref{est:Naloc} and Lemma~\ref{le:para1},
    \begin{equation}\label{est:F11}
        \left|\dot{x}_{1}\left(\varepsilon(t),\left(\Lambda Q(\cdot-x_{1})\right)\phi_{2}\right)\right|\lesssim
        |\dot{x}_{1}|\mathcal{N}_{A}(\varepsilon)\lesssim \frac{C_{0}}{t^{\frac{11}{4}}}.
    \end{equation}
    Then, using again the definition of $\phi_{2}$ in~\eqref{equ:defphi2} and the bootstrap setting~\eqref{est:Boot},
    \begin{equation}\label{est:F12}
    \begin{aligned}
    \left|\left(\varepsilon(t),\rho_{1}\partial_{t}\phi_{2}\right)\right|
    \lesssim t^{-1}\int_{-2t}^{-t}|\varepsilon(t)|\dd y\lesssim t^{-\frac{1}{2}}\mathcal{N}_{A}(\varepsilon)\lesssim \frac{C_{0}}{t^{\frac{9}{4}}}.
        \end{aligned}
    \end{equation}
    On the other hand, from Lemma~\ref{le:eque}, we decompose
    \begin{equation*}
    \begin{aligned}
        \frac{1}{m_{0}^{2}}\left(\partial_{t}\varepsilon(t),\rho_{1}\phi_{2}\right)
        &=\frac{1}{m_{0}^{2}}\left((S+\varepsilon)^{5}-S^{5}-5Q_{1}^{4}\varepsilon,\partial_{y}(\rho_{1}\phi_{2})\right)\\
        &+\frac{1}{m_{0}^{2}}\left(\partial_{y}^{2}\varepsilon-\varepsilon+5Q_{1}^{4}\varepsilon,\partial_{y}(\rho_{1}\phi_{2})\right)
        -\frac{1}{m_{0}^{2}}\left(\Phi(S),\rho_{1}\phi_{2}\right).
        \end{aligned}
    \end{equation*}
    Based on the definition of $\rho_{1}$ and $\phi_{2}$ in~\eqref{equ:defrhok}--\eqref{equ:defphi2}, we find 
    \begin{equation}\label{est:pyrho1phi21}
    \begin{aligned}
        \partial_{y}(\rho_{1}\phi_{2})&=-(\Lambda Q)(y-x_{1})
        +\rho_{1}\partial_{y}\phi_{2}
        \\
       & -
        \left((\Lambda Q)(y-x_{1})\right)\left(\phi_{2}-1\right),
        \end{aligned}
    \end{equation}
    which directly implies that 
    \begin{equation}\label{est:pyrho1phi2}
    \begin{aligned}
        \partial_{y}(\rho_{1}\phi_{2})&=-(\Lambda Q)(y-x_{1})+O\left(t^{-1}\textbf{1}_{(-2t,-t)}(y)\right)\\
        &+O\left(|(\Lambda Q)(y-x_{1})|\textbf{1}_{(-\infty,-t)}(y)\right).
        \end{aligned}
    \end{equation}
    Note that, from the definition of $S$ in~\eqref{equ:defS}, 
    \begin{equation}\label{est:Se5}
    \begin{aligned}
        \left|(S+\varepsilon)^{5}-S^{5}-5Q_{1}^{4}\varepsilon\right|
&\lesssim\left(|U|^{3}+|V\phi|^{3}+|W|^{3}\right)\varepsilon^{2}+|\varepsilon|^{5}\\
&+
\left(|Q_{2}|+|Q_{3}|\right)\left(|Q_{1}|^{3}+|Q_{2}|^{3}+|Q_{3}|^{3}\right)|\varepsilon|.
\end{aligned}
    \end{equation}
    Next, using Lemma~\ref{le:boundinte} and the bootstrap setting~\eqref{est:Boot}, we find 
    \begin{equation*}
    \left\|Q_{2}\left((\Lambda Q)(\cdot-x_{1})\right)\right\|_{L^{2}}+
    \left\|Q_{3}\left((\Lambda Q)(\cdot-x_{1})\right)\right\|_{L^{2}}\lesssim t^{-2}.
    \end{equation*}
    It follows from~\eqref{est:pyrho1phi2}--\eqref{est:Se5} and Lemma~\ref{le:L2e} that 
    \begin{equation}\label{est:F13}
    \begin{aligned}
      &\left|\left((S+\varepsilon)^{5}-S^{5}-5Q_{1}^{4}\varepsilon,\partial_{y}(\rho_{1}\phi_{2})\right)\right|\\
      &\lesssim\left(\|Q_{2}\left((\Lambda Q)(\cdot-x_{1})\right\|_{L^{2}}+\|Q_{3}\left((\Lambda Q)(\cdot-x_{1})\right\|_{L^{2}}\right)\mathcal{N}_{A}(\varepsilon)\\
      &+\left(\mathcal{N}_{A}(\varepsilon)+t^{-1}\right)\mathcal{N}_{A}(\varepsilon)+\left(\|\varepsilon\|_{L^{\infty}}^{3}+t^{-1}\right)\|\varepsilon\|_{L^{2}}^{2}\lesssim \frac{C_{0}}{t^{\frac{11}{4}}}+\frac{1}{t^{3}}.
      \end{aligned}
    \end{equation}
    Then, from~\eqref{est:pyrho1phi21} and (ii) of Proposition~\ref{prop:L}, we compute 
    \begin{equation*}
    \begin{aligned}
        &\partial_{y}^{2}\left(\partial_{y}(\rho_{1}\phi_{2})\right)-\partial_{y}(\rho_{1}\phi_{2})+5Q_{1}^{4}\partial_{y}\left(\rho_{1}\phi_{2}\right)\\
        &=-2Q_{1}+O\left(|\nu_{1}|e^{-\frac{1}{2}|y-x_{1}|}+e^{-\frac{1}{2}|y-x_{1}|}\textbf{1}_{(-\infty,-t)}(y)+\textbf{1}_{[-2t,-t]}(y)\right).
        \end{aligned}
    \end{equation*}
    From the above estimate, integration by parts, the bootstrap setting~\eqref{est:Boot} and the orthogonality condition $(\varepsilon,Q_{1})=0$, we deduce that 
    \begin{equation*}
    \left|\left(\partial_{y}^{2}\varepsilon-\varepsilon+5Q_{1}^{4}\varepsilon,\partial_{y}(\rho_{1}\phi_{2})\right)\right|\lesssim t^{-\frac{1}{2}}\mathcal{N}_{A}(\varepsilon)\lesssim \frac{C_{0}}{t^{\frac{9}{4}}}.
    \end{equation*}
    Combining the above estimate with~\eqref{est:F11}--\eqref{est:F12},~\eqref{est:F13} and Corollary~\ref{coro:Phirho}, we compete the proof of the first estimate in Lemma~\ref{le:refinednu}.

    \smallskip
    Second, from the definition of $\mathcal{F}_{2}$ in~\eqref{equ:defF}, we compute 
    \begin{equation}\label{equ:dotF2}
    \begin{aligned}
        \dot{\mathcal{F}}_{2}= &\frac{2\dot{x}_{1}}{m_{0}^{2}}\left(\varepsilon(t),\left(\Lambda Q(\cdot-x_{1})\right)\phi_{2}\right)
        -\frac{\dot{x}_{2}}{m_{0}^{2}}\left(\varepsilon(t),\left(\Lambda Q(\cdot-x_{2})\right)\phi_{2}\right)
        \\
        &+\frac{1}{m_{0}^{2}}\left(\partial_{t}\varepsilon(t),\left(2\rho_{1}-\rho_{2}\right)\phi_{2}\right)+\frac{1}{m_{0}^{2}}\left(\varepsilon(t),\left(2\rho_{1}-\rho_{2}\right)\partial_{t}\phi_{2}\right).
        \end{aligned}
    \end{equation}
    Based on an argument similar to the above argument, we deduce that 
    \begin{equation*}
           \left| \dot{x}_{1}\left(\varepsilon(t),\left(\Lambda Q(\cdot-x_{1})\right)\phi_{2}\right)\right|
        +\left|{\dot{x}_{2}}\left(\varepsilon(t),\left(\Lambda Q(\cdot-x_{2})\right)\phi_{2}\right)\right|\lesssim \frac{C_{0}}{t^{\frac{11}{4}}}.
    \end{equation*}
    In addition, from the definition of $\phi_{2}$ in~\eqref{equ:defphi2} and the bootstrap setting~\eqref{est:Boot},
    \begin{equation*}
          \left|\left(\varepsilon(t),\left(2\rho_{1}-\rho_{2}\right)\partial_{t}\phi_{2}\right)\right|
    \lesssim t^{-1}\int_{-2t}^{-t}|\varepsilon(t)|\dd y\lesssim t^{-\frac{1}{2}}\mathcal{N}_{A}(\varepsilon)\lesssim \frac{C_{0}}{t^{\frac{9}{4}}}.
    \end{equation*}
    On the other hand, using again Lemma~\ref{le:boundinte}, Lemma~\ref{le:eque}, Lemma~\ref{le:L2e}, (ii) of Proposition~\ref{prop:L} and the bootstrap setting~\eqref{est:Boot}, we find 
    \begin{equation*}
         \frac{1}{m_{0}^{2}}\left(\partial_{t}\varepsilon(t),\left(2\rho_{1}-\rho_{2}\right)\phi_{2}\right)
        =-\frac{1}{m_{0}^{2}}\left(\Phi(S),\left(2\rho_{1}-\rho_{2}\right)\phi_{2}\right)
        +O\left(\frac{C_{0}}{t^{\frac{9}{4}}}+\frac{1}{t^{3}}\right).
    \end{equation*}
    It follows directly from Corollary~\ref{coro:Phirho} that 
    \begin{equation*}
         \frac{1}{m_{0}^{2}}\left(\partial_{t}\varepsilon(t),\left(2\rho_{1}-\rho_{2}\right)\phi_{2}\right)=-\left(\dot{\nu}_{2}-2b_{2}\right)+O\left(\frac{C_{0}}{t^{\frac{9}{4}}}+\frac{1}{t^{3}}\right).
    \end{equation*}
    Combining the above estimates with~\eqref{equ:dotF2}, we complete the proof of the second estimate in Lemma~\ref{le:refinednu}. Since the proof of the third estimate is similar to that above, we omit it. The proof of Lemma~\ref{le:refinednu} is complete.
\end{proof}

\subsection{Energy functional}\label{SS:Energy}
Consider the nonlinear energy functional for $\varepsilon$\footnote{See~\eqref{equ:defS} and \eqref{equ:defchi1}-\eqref{equ:defphi1} for the definition of $S$ and $\phi_{1}$.}:
\begin{equation*}
    \mathcal{E}(t,\varepsilon)=\int_{\RR}\left((\partial_{y}\varepsilon)^{2}+\varepsilon^{2}\phi_{1}
    -\frac{1}{3}\left(\left(S+\varepsilon\right)^{6}-S^{6}-6S^{5}\varepsilon\right)
    \right)\dd y.
\end{equation*}
Let $\chi_{3}:\RR\to [0,1]$ be a $C^{\infty}$ function such that 
\begin{equation}\label{est:defchi3}
    {\chi_{3}}_{|\left(-\infty,-1\right)}\equiv {\chi_{3}}_{|\left(1,\infty\right)}\equiv 0\quad \mbox{and}\quad 
    {\chi_{3}}_{|\left(-\frac{1}{2},\frac{1}{2}\right)}\equiv 1.
\end{equation}
Then, we consider the following suitable cut-off functions: 
\begin{equation}\label{equ:defphik}
    \phi_{k+2}(t,y)=\chi_{3}\left(\frac{y-x_{k}(t)}{\log t}\right),\quad \mbox{for any}\ k\in [\![1,3]\!].
\end{equation}
Note that, from~\eqref{equ:defphi1} and the above definition of $\phi_{k}$, we have 
\begin{equation}\label{est:phi345}
    0\lesssim \min_{k\in \in [\![3,5]\!]} \phi_{k}\lesssim
    \max_{k\in \in [\![3,5]\!]} \phi_{k}\lesssim \phi_{1}.
\end{equation}
We now introduce the refined term related to the localized $L^{2}$ norm of $\varepsilon$:
\begin{equation*}
    \mathcal{P}(t,\varepsilon)=\nu_{1}\int_{\RR}\varepsilon^{2}\phi_{3}\dd y+\nu_{2}\int_{\RR}\varepsilon^{2}\phi_{4}\dd y+\nu_{3}\int_{\RR}\varepsilon^{2}\phi_{5}\dd y.
\end{equation*}
Last, we define the following functional $\mathcal{K}$ which is a combination of
$\mathcal{E}$ and $\mathcal{P}$:
\begin{equation*}
    \mathcal{K}(t,\varepsilon)=\mathcal{E}(t,\varepsilon)+\mathcal{P}(t,\varepsilon).
\end{equation*}

We mention here that, the functional $\mathcal{K}$ is coercive in $\varepsilon$ at leading order and is an almost monotonicity quantity for this problem.

\begin{proposition}
    The following estimates hold on $[T^{*},T_{n}]$.
    \begin{enumerate}
        \item \emph{Coercivity of $\mathcal{K}$.} We have 
        \begin{equation}\label{est:coerK}
            \mathcal{N}^{2}_{A}(\varepsilon)\lesssim \mathcal{K}+\frac{C_{0}}{t^{4}}.
        \end{equation}

        \item \emph{Time control of $\mathcal{K}$.} We have 
        \begin{equation}\label{est:dtK}
            -\frac{\dd \mathcal{K}}{\dd t}\lesssim 
            \frac{C_{0}A^{\frac{1}{2}}}{t^{\frac{9}{2}}}+\frac{C_{0}^{2}}{t^{\frac{9}{2}}\log t}+
         \frac{C_{0}^{3}A^{\frac{1}{2}}}{t^{\frac{23}{5}}}.
        \end{equation}
    \end{enumerate}
\end{proposition}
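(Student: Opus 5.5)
For the coercivity~\eqref{est:coerK}, I will expand the nonlinear part of $\mathcal{E}$ to quadratic order,
\begin{equation*}
(S+\varepsilon)^6-S^6-6S^5\varepsilon=15S^4\varepsilon^2+O\bigl(|S|^3|\varepsilon|^3+\varepsilon^6\bigr),
\end{equation*}
and then replace $S^4$ by $\sum_k Q_k^4$.

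\emph{Error terms.} The cross terms between different solitons are bounded by Lemma~\ref{le:boundinte}. The contributions of $V\phi$ and $W$ are $O(t^{-2})$ in $L^\infty$, so they give at most $t^{-2}\|\varepsilon\|_{L^2}^2\lesssim t^{-4}$ by Lemma~\ref{le:L2e}. The cubic and higher terms are controlled by $\|\varepsilon\|_{L^\infty}\mathcal{N}_A^2(\varepsilon)$.

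\emph{Localization.} Next I localize with a partition of unity adapted to the three solitons, with cut-offs at scale $\log t$ (or $\sqrt{t}$). Since $\phi_1\equiv1$ near each $x_k$, localized coercivity follows from (iv) of Proposition~\ref{prop:L}. The orthogonality conditions~\eqref{equ:ortho} kill the three directions $Q_k,\partial_yQ_k,\Lambda_kQ_k$, and the rescaling $(1+\nu_k)$ only costs $O(t^{-1})$. Away from the solitons the quadratic form is simply $\int(\partial_y\varepsilon)^2+\varepsilon^2\phi_1$, which is exactly $\mathcal{N}_A^2$. The term $\mathcal{P}$ satisfies $|\mathcal{P}|\lesssim t^{-1}\mathcal{N}_A^2$ by~\eqref{est:phi345}, so it is absorbed. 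The remaining errors are $O(C_0t^{-4})$ and produce the additive term in~\eqref{est:coerK}.

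For the time control~\eqref{est:dtK}, I will differentiate $\mathcal{E}$ using Lemma~\ref{le:eque}. This is a Kato-type computation: integrating by parts gives the main monotone terms
\begin{equation*}
-3\int(\partial_y\varepsilon)^2\phi_1'-\int\varepsilon^2\phi_1'+\int\varepsilon^2\phi_1'''+\partial_t\phi_1\text{-term}.
\end{equation*}
Since $\phi_1'<0$, these have the favorable sign up to $A^{-2}$ corrections, and $\partial_t\phi_1\sim A^{-1}t^{-1/2}\chi_1'$ also has a sign. The nonlinear terms $\int(\partial_y\varepsilon)\phi_1'((S+\varepsilon)^5-S^5)$ are localized where $\phi_1'$ lives, far to the right of the solitons (around $y\sim At^{1/2}$). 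There $S$ reduces to the constant tails $-2m_0\theta_kX_k$-type of size $t^{-2}$, so these terms are small.

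\emph{Remaining contributions.} The first is the $\partial_tS$ terms from the explicit time dependence of $S$ inside the potential. After using the equation for $S$, these produce $\int\varepsilon^2\,\partial_t(S^4)$-type terms. Their leading part comes from $\dot{x}_k\approx\nu_k$, which gives $\nu_k\int\varepsilon^2\partial_y(Q_k^4)$ with no sign. This is precisely what $\mathcal{P}$ is designed to cancel, in the spirit of~\cite[Subsection 3.1.3]{NguyenKdV}. Indeed, $\frac{\dd}{\dd t}\bigl(\nu_k\int\varepsilon^2\phi_{k+2}\bigr)$ contains $-\nu_k\int\varepsilon\,\partial_y\mathcal{L}_k\varepsilon\,\phi_{k+2}$ terms. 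Together with the $\varepsilon^2\nu_k$ contribution from the $(1+\nu_k)$ mismatch in $\mathcal{E}$ near $x_k$, these combine into a term controlled by $t^{-1}\mathcal{N}_A^2/\log t$, since derivatives of $\phi_{k+2}$ are $O(1/\log t)$. This is where the $C_0^2t^{-9/2}(\log t)^{-1}$ term comes from, and $\dot{\nu}_k\int\varepsilon^2\phi_{k+2}\lesssim t^{-2}\mathcal{N}_A^2$ is harmless. The second contribution is the source term $\int\Phi(S)\,(\ldots)$ applied to $-\partial_y^2\varepsilon+\varepsilon\phi_1-(S+\varepsilon)^5+S^5$. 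I will insert the decomposition~\eqref{equ:defPhi}. The $\vec{\rm Mod}_k\cdot\vec M_kQ$ terms are paired against $\mathcal{L}_k\varepsilon$ and mostly vanish by orthogonality, with the remainder bounded by Lemma~\ref{le:para1} times $\mathcal{N}_A$. The $\Phi_j$ terms are bounded by the weighted estimates of Lemma~\ref{le:Phiweight}, giving $A^{1/2}t^{-11/4}\cdot C_0t^{-7/4}=C_0A^{1/2}t^{-9/2}$. The $\mathcal{S}$ remainders are handled the same way using Lemma~\ref{le:para1}, which gives the $C_0^3A^{1/2}t^{-23/5}$-type term after crude bounds.

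The main obstacle is the cancellation of the unsigned scaling terms $\nu_k\int\varepsilon^2\partial_y(Q_k^4)$ and $\nu_k\int\varepsilon^2$ near each soliton by $\frac{\dd}{\dd t}\mathcal{P}$. This requires a careful bookkeeping of the localized virial-type identity for $\partial_y\mathcal{L}_k$ with the cut-off at scale $\log t$, and a check that the leftover terms gain the factor $1/\log t$.
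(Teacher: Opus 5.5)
Your proposal is correct and follows essentially the paper's route. You use localized coercivity from (iv) of Proposition~\ref{prop:L}, with $\mathcal{P}$ controlled via \eqref{est:phi345}, and the Kato identity for $\mathcal{E}$ with the decreasing weight $\phi_1$. You then cancel the unsigned term $20\dot x_k\int(\partial_yQ_k)Q_k^3\varepsilon^2$ coming from $\partial_tS$ against $-20\nu_k\int(\partial_yQ_k)Q_k^3\varepsilon^2$ from $\frac{\dd\mathcal{P}}{\dd t}$, which leaves only $(\dot x_k-\nu_k)$-terms and $(\log t)^{-1}$ cut-off errors.

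You do not discuss the term $-2\nu_k\int\Phi(S)\,\varepsilon\phi_{k+2}$ in $\frac{\dd\mathcal{P}}{\dd t}$. Its $\partial_yQ_k$ and $\Lambda_kQ_k$ modulation parts also need the orthogonality conditions \eqref{equ:ortho}, for instance $(\partial_yQ_1,\varepsilon\phi_3)=-(\partial_yQ_1(1-\phi_3),\varepsilon)=O(t^{-1/10}\mathcal{N}_A(\varepsilon))$. Without this, the crude bound $t^{-1}\cdot C_0t^{-7/4}\cdot\mathcal{N}_A(\varepsilon)$ only gives the borderline $C_0^2t^{-9/2}$, which \eqref{est:dtK} does not allow. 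This term, not the $\mathcal{S}$-remainders, is where the exponent $\frac{23}{5}$ in \eqref{est:dtK} comes from.
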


\begin{proof}
We mention here that, in what follows, we will use the bootstrap setting~\eqref{est:Boot} frequently without further mention. Moreover, we tacitly take the initial time $T_{0}\gg 1$ large enough which may depend on the constants $C_{0}$ and $A$.

\smallskip
    Proof of (i). The coercivity of $\mathcal{K}$ follows as a standard consequence of (iv) of Proposition~\ref{prop:L} with the orthogonality condition~\eqref{equ:ortho} and an elementary localization argument. Hence, we only sketch the argument and refer the reader to~\cite[Proposition 3.10]{LYPRINT} for more details.

    \smallskip
    First, from Lemma~\ref{le:L2e}, we decompose 
    \begin{equation*}
    \begin{aligned}
        \mathcal{E}
        &=\int_{\RR}\left(\partial_{y}\varepsilon\right)^{2}(1-\phi_{1})\dd y-\frac{1}{2}
        \int_{\RR}\varepsilon^{2}\left(
        \frac{\left(\partial_{y}\phi_{1}\right)^{2}}{2\phi_{1}}-\partial_{y}^{2}\phi_{1}
        \right)\dd y\\
        &+\int_{\RR}\left(
        (\partial_{y}(\varepsilon\sqrt{\phi_{1}}))^{2}
        +(\varepsilon\sqrt{\phi_{1}})^{2}
        -5U^{4}(\varepsilon\sqrt{\phi_{1}})^{2}
        \right)\dd y+O\left(t^{-4}\right).
        \end{aligned}
    \end{equation*}
    Using~\eqref{equ:ortho} and (iv) of Proposition~\ref{prop:L}, we obtain 
    \begin{equation*}
       \int_{\RR}\left(
        (\partial_{y}(\varepsilon\sqrt{\phi_{1}}))^{2}
        +(\varepsilon\sqrt{\phi_{1}})^{2}
        -5U^{4}(\varepsilon\sqrt{\phi_{1}})^{2}
        \right)\dd y\ge \mu\|\varepsilon\sqrt{\phi_{1}}\|_{H^{1}}^{2}+O\left(t^{-4}\right),
    \end{equation*}
    which directly implies that 
    \begin{equation*}
        \mathcal{N}_{A}^{2}(\varepsilon)\lesssim \|\varepsilon\sqrt{\phi_{1}}\|_{H^{1}}^{2}+\int_{\RR}\left(\partial_{y}\varepsilon\right)^{2}(1-\phi_{1})\dd y\lesssim \mathcal{E}+O\left(t^{-4}\right),\ \ \mbox{for} \ A\gg 1.
    \end{equation*}
    Here, we use the fact that 
    \begin{equation*}
        A|\partial_{y}\phi_{1}|+ A^{2}|\partial^{2}_{y}\phi_{1}|\lesssim \phi_{1}
        \Longrightarrow
       \Big|  \int_{\RR}\varepsilon^{2}\left(
        \frac{\left(\partial_{y}\phi_{1}\right)^{2}}{2\phi_{1}}-\partial_{y}^{2}\phi_{1}
        \right)\dd y\Big|\lesssim A^{-1}\int_{\RR}\varepsilon^{2}\phi_{1}\dd y.
    \end{equation*}
    Second, using the pointwise estimate~\eqref{est:phi345}, we directly have 
    \begin{equation*}
        |\mathcal{P}|\lesssim 
        \sum_{k=1}^{3}|\nu_{k}|\int_{\RR}\varepsilon^{2}\phi_{1}\dd y
        \lesssim
        \sum_{k=1}^{3}|\nu_{k}|\mathcal{N}_{A}^{2}(\varepsilon)\lesssim \frac{C_{0}^{2}}{t^{\frac{9}{2}}}.
    \end{equation*}
    Combining the above estimates, we complete the proof of the estimate~\eqref{est:coerK}.

    \smallskip
    Proof of (ii). \textbf{Step 1.} Time control of $\mathcal{E}$. We claim that 
    \begin{equation}\label{est:dtE}
    \begin{aligned}
    \frac{\dd \mathcal{E}}{\dd t}&\ge 
    20\sum_{k=1}^{3}\dot{x}_{k}\int_{\RR}\left(\partial_{y}Q_{k}\right)
            \left(Q_{k}^{3}\varepsilon^{2}\right)\dd y\\
           & -\frac{1}{4}\int_{\RR}\varepsilon^{2}\partial_{y}\phi_{1}\dd y
        +O\left(\frac{C_{0}A^{\frac{1}{2}}}{t^{\frac{9}{2}}}+
         \frac{C_{0}^{3}A^{\frac{1}{2}}}{t^{\frac{19}{4}}}\right).
         \end{aligned}
    \end{equation}
    Indeed, from Lemma~\ref{le:eque} and integration by parts, we decompose
    \begin{equation*}
        \frac{\dd \mathcal{E}}{\dd t}=\mathcal{J}_{1}+\mathcal{J}_{2}+\mathcal{J}_{3},
    \end{equation*}
    where
    \begin{equation*}
        \begin{aligned}
            \mathcal{J}_{1}&=
            2\int_{\RR}\Phi(S)\left(\partial_{y}^{2}\varepsilon-\varepsilon\phi_{1}+(S+\varepsilon)^{5}-S^{5}\right)\dd y,
            \\
            \mathcal{J}_{2}&=2\int_{\RR}\varepsilon(1-\phi_{1})\partial_{y}\left(\partial_{y}^{2}\varepsilon-\varepsilon+(S+\varepsilon)^{5}-S^{5}\right)\dd y,\\
             \mathcal{J}_{3}&=-2\int_{\RR}(\left(\partial_{t}S\right)\left(
            (S+\varepsilon)^{5}-S^{5}-5S^{4}\varepsilon
            \right)\dd y+\int_{\RR}\varepsilon^{2}\partial_{t}\phi_{1}\dd y.
        \end{aligned}
    \end{equation*}

    \emph{Estimate on $\mathcal{J}_{1}$.} We claim that 
    \begin{equation}\label{est:J1}
        \mathcal{J}_{1}=O\left(\frac{C_{0}A^{\frac{1}{2}}}{t^{\frac{9}{2}}}
        +\frac{C_{0}^{3}A^{\frac{1}{2}}}{t^{\frac{19}{4}}}
        +\frac{C_{0}^{2}}{t^{5}}\right).
    \end{equation}
    Indeed, we first rewrite 
    \begin{equation*}
    \begin{aligned}
        \partial_{y}^{2}\varepsilon-\varepsilon\phi_{1}+(S+\varepsilon)^{5}-S^{5}&=\partial_{y}^{2}\varepsilon-\varepsilon+5(Q_{1}^{4}+Q_{2}^{4}+Q_{3}^{4})\varepsilon\\
        &+10S^{3}\varepsilon^{2}+10S^{2}\varepsilon^{3}+5S\varepsilon^{4}+\varepsilon^{5}\\
          &+5(S^{4}-Q_{1}^{4}-Q_{2}^{4}-Q_{3}^{4})\varepsilon+\varepsilon(1-\phi_{1}).
        \end{aligned}
    \end{equation*}
    It follows from $(\varepsilon,\partial_{y}Q_{k})=0$,
    ${\rm{Ker}}\mathcal{L}={\rm{Span}}\left\{Q'\right\}$ and Lemma~\ref{le:boundinte} that 
    \begin{equation*}
    \begin{aligned}
       &\sum_{k=1}^{3}\left|\left(\partial_{y}Q_{k},\partial_{y}^{2}\varepsilon-\varepsilon\phi_{1}+(S+\varepsilon)^{5}-S^{5}\right)\right|\\
       &\lesssim \left(t^{-2}+t^{-3}\right)\mathcal{N}_{A}(\varepsilon)+\sum_{k=1}^{3}|b_{k}|(1+|\nu_{k}|)\mathcal{N}_{A}(\varepsilon)\\
       &+\mathcal{N}_{A}^{2}(\varepsilon)\left(1+
       \|\varepsilon\|_{L^{\infty}}+
       \|\varepsilon\|_{L^{\infty}}^{3}\right)
       \lesssim \frac{C_{0}^{2}}{t^{\frac{7}{2}}}+\frac{C_{0}}{t^{\frac{15}{4}}}.
        \end{aligned}
    \end{equation*}
    Based on the above estimate and (i) of Lemma~\ref{le:para1}, we find
    \begin{equation}\label{est:energyMOD1}
        \sum_{k=1}^{3}\left|(\dot{x}_{k}-\nu_{k})\left(\partial_{y}Q_{k},\partial_{y}^{2}\varepsilon-\varepsilon\phi_{1}+(S+\varepsilon)^{5}-S^{5}\right)\right|\lesssim
        \frac{C_{0}^{3}}{t^{\frac{21}{4}}}.
    \end{equation}
    Similarly, using $(\varepsilon,Q_{k})=(\varepsilon,\Lambda_{k}Q_{k})=0$, $\mathcal{L}\Lambda Q=-2Q$ and Lemma~\ref{le:boundinte}, we find 
    \begin{equation*}
        \begin{aligned}
           \sum_{k=1}^{3}\left|\left(\Lambda_{k}Q_{k},\partial_{y}^{2}\varepsilon-\varepsilon\phi_{1}+(S+\varepsilon)^{5}-S^{5}\right)\right|\lesssim\frac{C_{0}^{2}}{t^{\frac{7}{2}}}+\frac{C_{0}}{t^{\frac{15}{4}}},
        \end{aligned}
    \end{equation*}
    which implies that
    \begin{equation}\label{est:energyMOD2}
        \sum_{k=1}^{3}\left|(\dot{\nu}_{k}-2\nu_{k})\left(\Lambda_{k}Q_{k},\partial_{y}^{2}\varepsilon-\varepsilon\phi_{1}+(S+\varepsilon)^{5}-S^{5}\right)\right|\lesssim
        \frac{C_{0}^{3}}{t^{\frac{21}{4}}}.
    \end{equation}
    Then, from integration by parts and Lemma~\ref{le:L2e}, we have
    \begin{equation*}
    \begin{aligned}
        &\sum_{k=1}^{3}\left|\left(Z_{k},\partial_{y}^{2}\varepsilon-\varepsilon\phi_{1}+(S+\varepsilon)^{5}-S^{5}\right)\right|\\
        &\lesssim \sum_{k=1}^{3}\left(1+\|Z_{k}\|_{\dot{H}^{1}}+\|Z_{k}\sqrt{\phi_{1}}\|_{L^{2}}\right)\mathcal{N}_{A}(\varepsilon)+\|\varepsilon\|_{L^{2}}^{2}\|\varepsilon\|_{L^{\infty}}^{3}\\
        &+\|\varepsilon\|_{L^{2}}\left(\|V\phi\|_{L^{\infty}}^{3}\|V\phi\|_{L^{2}}
        +\|W\|_{L^{\infty}}^{3}\|W\|_{L^{2}}
        \right)\lesssim \frac{C_{0}A^{\frac{1}{2}}}{t^{\frac{3}{2}}}.
        \end{aligned}
    \end{equation*}
    It follows from (ii) of Lemma~\ref{le:para1} that 
    \begin{equation*}
        \begin{aligned}
            \left| \left(\dot{b}_{1}+\alpha e^{-r_{1}}\right)
             \left(Z_{1},\partial_{y}^{2}\varepsilon-\varepsilon\phi_{1}+(S+\varepsilon)^{5}-S^{5}\right)\right|\lesssim \frac{C_{0}^{3}A^{\frac{1}{2}}}{t^{\frac{19}{4}}},\\
             \left| \left(\dot{b}_{3}+3\alpha e^{-r_{2}}\right)
             \left(Z_{3},\partial_{y}^{2}\varepsilon-\varepsilon\phi_{1}+(S+\varepsilon)^{5}-S^{5}\right)\right|\lesssim  \frac{C_{0}^{3}A^{\frac{1}{2}}}{t^{\frac{19}{4}}}
             ,\\
             \left| \left(\dot{b}_{2}+\alpha e^{-r_{1}}-\alpha e^{-r_{2}}\right)
             \left(Z_{2},\partial_{y}^{2}\varepsilon-\varepsilon\phi_{1}+(S+\varepsilon)^{5}-S^{5}\right)\right|\lesssim  \frac{C_{0}^{3}A^{\frac{1}{2}}}{t^{\frac{19}{4}}}.
        \end{aligned}
    \end{equation*}
    Combining the above estimate with~\eqref{est:energyMOD1} and~\eqref{est:energyMOD2}, we obtain 
    \begin{equation}\label{est:energyMOD}
\sum_{k=1}^{3}\left|\int_{\RR}\left({\vec{\rm{Mod}}}_{k}\cdot\vec{M}_{k}Q\right)\left(
      \partial_{y}^{2}\varepsilon-\varepsilon\phi_{1}+(S+\varepsilon)^{5}-S^{5}
      \right)\dd y\right|\lesssim \frac{C_{0}^{3}A^{\frac{1}{2}}}{t^{\frac{19}{4}}}.
    \end{equation}
    Then, from integration by parts, Lemma~\ref{le:L2e} and Lemma~\ref{le:Phiweight}, 
    \begin{equation}\label{est:energyPhikS}
    \begin{aligned}
        &\sum_{k=1}^{3}\left|\int_{\RR}\Phi_{k}(S)\left(\partial_{y}^{2}\varepsilon-\varepsilon\phi_{1}+(S+\varepsilon)^{5}-S^{5}\right)\dd y\right|\\
        &\lesssim \sum_{k=1}^{3}\left(\|\Phi_{k}(S)\|_{\dot{H}^{1}}+
        \left(\int_{\RR}|\Phi_{k}(S)|^{2}\phi_{1}\dd y\right)^{\frac{1}{2}}\right)\mathcal{N}_{A}(\varepsilon)\\
        &+\sum_{k=1}^{3}\|\varepsilon\|_{L^{2}}\|\Phi_{k}(S)\|_{L^{2}}\left(\|V\phi\|_{L^{\infty}}^{4}+\|W\|_{L^{\infty}}^{4}+\|\varepsilon\|_{L^{\infty}}^{4}\right)\lesssim \frac{C_{0}A^{\frac{1}{2}}}{t^{\frac{9}{2}}}+\frac{1}{t^{5}}.
        \end{aligned}
    \end{equation}
    On the other hand, using again Definition~\ref{def:Sfunction}, Lemma~\ref{le:L2e} and Lemma~\ref{le:Phiweight}, for any real-valued function $f\in \mathcal{S}$, we have 
    \begin{equation*}
    \begin{aligned}
        &\left|\int_{\RR}f
        \left(\partial_{y}^{2}\varepsilon-\varepsilon\phi_{1}+(S+\varepsilon)^{5}-S^{5}\right)\dd y\right|\\
         &\lesssim \|f\|_{H^{1}}\mathcal{N}_{A}(\varepsilon)+\|\varepsilon\|_{L^{2}}\|f\|_{L^{2}}\|V\phi\|_{L^{\infty}}^{4}\\
         &+\|f\|_{L^{2}}\|\varepsilon\|_{L^{2}}\left(\|W\|_{L^{\infty}}^{4}+\|\varepsilon\|_{L^{\infty}}^{4}\right)
         \lesssim \frac{C^{2}_{0}}{t^{5}}.
        \end{aligned}
    \end{equation*}
    We see that~\eqref{est:J1} follows from the above estimate and~\eqref{est:energyMOD}--\eqref{est:energyPhikS}.

    \smallskip
    \emph{Estimate on $\mathcal{J}_{2}$.} We claim that 
    \begin{equation}\label{est:J2}
        \mathcal{J}_{2}\ge -\frac{1}{2}\int_{\RR}\varepsilon^{2}\partial_{y}\phi_{1}\dd y
        +O\left(\frac{C_{0}^{2}}{t^{6}}\right).
    \end{equation}
    Indeed, from integration by parts, we find 
    \begin{equation*}
    \begin{aligned}
        \mathcal{J}_{2}&=-3\int_{\RR}(\partial_{y}\varepsilon)^{2}\partial_{y}\phi_{1}\dd y+\int_{\RR}\varepsilon^{2}\partial_{y}^{3}\phi_{1}\dd y\\
        &-2\int_{\RR}(\partial_{y}\varepsilon)(1-\phi_{1})\left((S+\varepsilon)^{5}-S^{5}\right)\dd y\\
        &+2\int_{\RR}\varepsilon\left(\partial_{y}\phi_{1}\right)\left((S+\varepsilon)^{5}-S^{5}\right)\dd y-\int_{\RR}\varepsilon^{2}\partial_{y}\phi_{1}\dd y.
        \end{aligned}
    \end{equation*}
    First, from the definition of $\phi_{1}$ in~\eqref{equ:defphi1}, we check that 
    \begin{equation*}
        \left|\partial_{y}^{3}\phi_{1}\right|\lesssim A^{-2}\left|\partial_{y}\phi_{1}\right|\Longrightarrow
       \left| \int_{\RR}\varepsilon^{2}\partial_{y}^{3}\phi_{1}\dd y\right|\lesssim
       A^{-2}\left|\int_{\RR}\varepsilon^{2}\partial_{y}\phi_{1}\dd y\right|.
    \end{equation*}
    Then, using again the definition of $\phi_{1}$ in~\eqref{equ:defphi1}, 
    \begin{equation*}
        \begin{aligned}
           \left| \partial_{y}\phi_{1}\right|&\lesssim t^{-3}\ \ \mbox{on}\ \left(-\infty,t^{\frac{1}{2}}\right)\Longrightarrow U^{4}|\partial_{y}\phi_{1}|\lesssim t^{-3}\left(Q_{1}^{2}+Q_{2}^{2}+Q_{3}^{2}\right),\\
           \left| 1-\phi_{1}\right|&\lesssim t^{-3}\ \ \mbox{on}\ \left(-\infty,t^{\frac{1}{2}}\right)\Longrightarrow U^{4}|1-\phi_{1}|\lesssim t^{-3}\left(Q_{1}^{2}+Q_{2}^{2}+Q_{3}^{2}\right).
        \end{aligned}
    \end{equation*}
    It follows directly from Lemma~\ref{le:L2e} that 
    \begin{equation*}
    \begin{aligned}
       & \int_{\RR}\left(|\partial_{y}\varepsilon||1-\phi_{1}|+|\varepsilon||\partial_{y}\phi_{1}|\right)\left|(S+\varepsilon)^{5}-S^{5}\right|\dd y\\
       &\lesssim t^{-3}\mathcal{N}_{A}^{2}(\varepsilon)+\|\varepsilon\|_{L^{2}}^{2}\left(\|V\phi\|_{L^{\infty}}^{4}+\|W\|_{L^{\infty}}^{4}+\|\varepsilon\|_{L^{\infty}}^{4}\right)\\
       &+\mathcal{N}_{A}(\varepsilon)\|\varepsilon\|_{L^{2}}\left(\|V\phi\|_{L^{\infty}}^{4}+\|W\|_{L^{\infty}}^{4}+\|\varepsilon\|_{L^{\infty}}^{4}\right)\lesssim \frac{C_{0}^{2}}{t^{\frac{13}{2}}}+\frac{1}{t^{6}}.
        \end{aligned}
    \end{equation*}
    We see that~\eqref{est:J2} follows from above estimates and the sign condition $\partial_{y}\phi_{1}<0$.

    \smallskip
    \emph{Estimate on $\mathcal{J}_{3}$.} We claim that 
    \begin{equation}\label{est:J3}
    \begin{aligned}
    \mathcal{J}_{3}&=20\sum_{k=1}^{3}\dot{x}_{k}\int_{\RR}\left(\partial_{y}Q_{k}\right)
            \left(Q_{k}^{3}\varepsilon^{2}\right)\dd y\\
            &+O\left(\frac{C_{0}^{3}}{t^{5}}
            +\frac{A}{t^{\frac{1}{2}}}\int_{\RR}\varepsilon^{2}|\partial_{y}\phi_{1}|\dd y
            \right).
            \end{aligned}
    \end{equation}
Indeed, from the definition of $S$ in~\eqref{equ:defS} and Lemma~\ref{le:para1}, we have\footnote{See Definition~\ref{def:Sfunction} and the proof of~\eqref{est:ptU}--\eqref{equ:dtS} for more details.} 
    \begin{equation}\label{est:dtSinfty}
    \|\partial_{t}S\|_{L^{\infty}}\lesssim t^{-1}\ \ \mbox{and}\ \ 
        \partial_{t}S=-\sum_{k=1}^{3}\sigma_{k}\dot{x}_{k}\partial_{y}Q_{k}+O_{H^{1}}\left(\frac{C_{0}}
        {t^{\frac{7}{4}}}\right).
    \end{equation}
    Using an elementary computation, we decompose 
    \begin{equation}\label{equ:decomS}
    \begin{aligned}
        (S+\varepsilon)^{5}-S^{5}-5S^{4}\varepsilon
        &=10 U^{3}\varepsilon^{2}+O\left(U^{2}\left(|V\phi|+|W|\right)\varepsilon^{2}\right)\\
        &+O\left(\left(|V\phi|^{3}+|W|^{3}\right)\varepsilon^{2}+|S|\varepsilon^{4}+|\varepsilon|^{5}\right).
        \end{aligned}
    \end{equation}
    Note that, from~\eqref{est:dtSinfty} and Lemma~\ref{le:boundinte}, we find 
    \begin{equation*}
        -20\int_{\RR}\left(\partial_{t}S\right)U^{3}\varepsilon^{2}\dd y=\sum_{k=1}^{3}20\dot{x}_{k}\int_{\RR}\left(\partial_{y}Q_{k}\right)
            \left(Q_{k}^{3}\varepsilon^{2}\right)\dd y+O\left(\frac{C_{0}^{3}}{t^{\frac{21}{4}}}\right).
    \end{equation*}
    Note also that, using Lemma~\ref{le:L2e} and Cauchy-Schwarz inequality, 
\begin{equation*}
\begin{aligned}
&\int_{\RR}|\partial_{t}S|\left(U^{2}(|V\phi|+|W|)+|V\phi|^{3}+|W|^{3}+|S|\varepsilon^{2}+|\varepsilon|^{3}\right)\varepsilon^{2}\dd y\\
&\lesssim t^{-2}\mathcal{N}_{A}^{2}(\varepsilon)+\|\partial_{t}S\|_{L^{\infty}}
\left(\|V\phi\|_{L^{\infty}}^{3}+\|W\|_{L^{\infty}}^{3}+\|\varepsilon\|_{L^{\infty}}^{3}\right)\|\varepsilon\|_{L^{2}}^{2}\lesssim \frac{C_{0}^{2}}{t^{\frac{11}{2}}}+\frac{1}{t^{5}}.
\end{aligned}
\end{equation*}
Combining the above two estimates with~\eqref{equ:decomS}, we find 
\begin{equation}\label{est:J31}
\begin{aligned}
    &-2\int_{\RR}(\left(\partial_{t}S\right)\left(
            (S+\varepsilon)^{5}-S^{5}-5S^{4}\varepsilon
            \right)\dd y\\
          &=20\sum_{k=1}^{3}\dot{x}_{k}\int_{\RR}\left(\partial_{y}Q_{k}\right)
            \left(Q_{k}^{3}\varepsilon^{2}\right)\dd y+O\left(\frac{C_{0}^{3}}{t^{5}}\right).
            \end{aligned}
\end{equation}
On the other hand, from the definition of $\phi_{1}$ in~\eqref{equ:defphi1},
\begin{equation*}
    \left|\partial_{t}\phi_{1}\right|\lesssim \frac{A}{t^{\frac{1}{2}}}|\partial_{y}\phi_{1}|
    \Longrightarrow 
    \int_{\RR}\varepsilon^{2}\partial_{t}\phi_{1}\dd y=O\left(\frac{A}{t^{\frac{1}{2}}}\int_{\RR}\varepsilon^{2}|\partial_{y}\phi_{1}|\dd y\right). 
    \end{equation*}
    We see that~\eqref{est:J3} follows from~\eqref{est:J31} and the above estimate.

\smallskip
    Combining the estimates~\eqref{est:J1}, \eqref{est:J2} and~\eqref{est:J3} with the sign condition $\partial_{y}\phi_{1}<0$, we complete the proof of estimate~\eqref{est:dtE} by taking $T_{0}\gg 1$ large enough.

\smallskip
\textbf{Step 2.} Time control of $\mathcal{P}$. We claim that 
\begin{equation}\label{est:dtP}
\begin{aligned}
    \frac{\dd \mathcal{P}}{\dd t}&=  -20\sum_{k=1}^{3}\nu_{k}\int_{\RR}\left(\partial_{y}Q_{k}\right)
            \left(Q_{k}^{3}\varepsilon^{2}\right)\dd y\\
           &+O\left(\frac{C_{0}^{2}}{t^{\frac{9}{2}}\log t}+\frac{C_{0}^{3}}{t^{\frac{23}{5}}}+\frac{C_{0}A^{\frac{1}{2}}}{t^{\frac{11}{2}}}\right).
            \end{aligned}
\end{equation}
Indeed, from Lemma~\ref{le:eque} and integration by parts, we compute 
\begin{equation*}
    \frac{\dd \mathcal{P}}{\dd t}=\mathcal{J}_{4}+\mathcal{J}_{5}+\mathcal{J}_{6},
\end{equation*}
where 
\begin{equation*}
    \begin{aligned}
        \mathcal{J}_{4}=\dot{\nu}_{1}\int_{\RR}\varepsilon^{2}\phi_{3}\dd y+\nu_{1}\int_{\RR}\varepsilon^{2}\partial_{t}\phi_{3}\dd y+2\nu_{1}\int_{\RR}(\partial_{t}\varepsilon)\varepsilon\phi_{3}\dd y,\\
          \mathcal{J}_{5}=\dot{\nu}_{2}\int_{\RR}\varepsilon^{2}\phi_{4}\dd y+\nu_{2}\int_{\RR}\varepsilon^{2}\partial_{t}\phi_{4}\dd y+2\nu_{2}\int_{\RR}(\partial_{t}\varepsilon)\varepsilon\phi_{4}\dd y,\\
            \mathcal{J}_{6}=\dot{\nu}_{3}\int_{\RR}\varepsilon^{2}\phi_{5}\dd y+\nu_{3}\int_{\RR}\varepsilon^{2}\partial_{t}\phi_{5}\dd y+2\nu_{3}\int_{\RR}(\partial_{t}\varepsilon)\varepsilon\phi_{5}\dd y.
    \end{aligned}
\end{equation*}
\emph{Estimate on $\mathcal{J}_{4}$.} We claim that 
\begin{equation}\label{est:J4}
\begin{aligned}
    \mathcal{J}_{4}&=-20\nu_{1}\int_{\RR}\left(\partial_{y}Q_{1}\right)
            \left(Q_{1}^{3}\varepsilon^{2}\right)\dd y\\
            &+O\left(\frac{C_{0}^{2}}{t^{\frac{9}{2}}\log t}+\frac{C_{0}^{3}}{t^{\frac{23}{5}}}+\frac{C_{0}A^{\frac{1}{2}}}{t^{\frac{11}{2}}}\right).
            \end{aligned}
\end{equation}
Indeed, from~\eqref{est:phi345} and Lemma~\ref{le:para1}, we find 
\begin{equation}\label{est:J41}
    \Big|\dot{\nu}_{1}\int_{\RR}\varepsilon^{2}\phi_{3}\dd y\Big|
    \lesssim |\dot{\nu}_{1}|\int_{\RR}\varepsilon^{2}\phi_{1}\dd y\lesssim
    |\dot{\nu}_{1}|\mathcal{N}^{2}_{A}(\varepsilon)\lesssim \frac{C_{0}^{3}}{t^{\frac{21}{4}}}.
\end{equation}
Then, using the definition of $\phi_{3}$ in~\eqref{equ:defphik} and Lemma~\ref{le:para1}, we have 
\begin{equation*}
    |\partial_{t}\phi_{3}|\lesssim \left(\frac{|\dot{x}_{1}|}{\log t}+\frac{1}{t\log t}\left|\frac{y-x_{1}(t)}{\log t}\right|\right)\left|\chi'_{3}\left(\frac{y-x_{1}(t)}{\log t}\right)\right|\lesssim \frac{\phi_{1}}{t\log t},
\end{equation*}
which implies that 
\begin{equation}\label{est:J42}
   \Big| \nu_{1}\int_{\RR}\varepsilon^{2}\partial_{t}\phi_{3}\dd y\Big|
   \lesssim \frac{1}{t^{2}\log t}\int_{\RR}\varepsilon^{2}\phi_{1}\dd y
   \lesssim \frac{1}{t^{2}\log t}\mathcal{N}_{A}^{2}(\varepsilon)\lesssim \frac{C_{0}^{2}}{t^{\frac{11}{2}}\log t}.
\end{equation}
On the other hand, from integration by parts and Lemma~\ref{le:eque}, we compute 
\begin{equation*}
\begin{aligned}
   2\nu_{1} \int_{\RR}(\partial_{t}\varepsilon)\varepsilon\phi_{3}\dd y
    &=-\nu_{1}\int_{\RR}
    \left(3(\partial_{y}\varepsilon)^{2}+\varepsilon^{2}\right)\partial_{y}\phi_{3}\dd y
    +\nu_{1}\int_{\RR}\varepsilon^{2}\partial_{y}^{3}\phi_{3}\dd y\\
    &-2\nu_{1}\int_{\RR}\Phi(S)\left(\varepsilon\phi_{3}\right)\dd y
    +10\nu_{1}\int_{\RR}\left(U^{4}\varepsilon\right)
    \left(\partial_{y}(\varepsilon\phi_{3})\right)\dd y
    \\
    &+2\nu_{1}\int_{\RR}\left((S+\varepsilon)^{5}-S^{5}-5U^{4}\varepsilon\right)
    \left(\partial_{y}(\varepsilon\phi_{3})\right)\dd y.
    \end{aligned}
\end{equation*}
First, the definition of $\phi_{3}$ in~\eqref{equ:defphik} gives
\begin{equation}\label{est:pyphi3}
    \left|\partial_{y}\phi_{3}\right|\lesssim \frac{\phi_{1}}{\log t}\quad \mbox{and}\quad 
    \left|\partial_{y}^{3}\phi_{3}\right|\lesssim \frac{\phi_{1}}{\log^{3}t},
\end{equation}
which implies that 
\begin{equation}\label{est:J431}
   \Big| \nu_{1}\int_{\RR}
    \left(3(\partial_{y}\varepsilon)^{2}+\varepsilon^{2}\right)\partial_{y}\phi_{3}\dd y\Big|
    +\Big|\nu_{1}\int_{\RR}\varepsilon^{2}\partial_{y}^{3}\phi_{3}\dd y\Big|\lesssim \frac{C_{0}^{2}}{t^{\frac{9}{2}}\log t}.
\end{equation}
Second, from~\eqref{equ:defphik} and the exponential decay of $\partial_{y}Q$ and $\Lambda Q$, we check that 
\begin{equation*}
\begin{aligned}
   \left| (\partial_{y}Q_{1})(1-\phi_{3})\right|&\lesssim t^{-\frac{1}{10}}|\partial_{y}Q_{1}|^{\frac{1}{2}},\\
    \left| (\Lambda_{1}Q_{1})(1-\phi_{3})\right|&\lesssim t^{-\frac{1}{10}}|\Lambda_{1}Q_{1}|^{\frac{1}{2}}.
   \end{aligned}
\end{equation*}
Analysis similar to that above shows that
\begin{equation}\label{est:localQ2Q3}
\begin{aligned}
    |(\partial_{y}Q_{2})\phi_{3}|+|(\partial_{y}Q_{3})\phi_{3}|&\lesssim
    t^{-\frac{1}{10}}
    \left(|\partial_{y}Q_{2}|^{\frac{1}{2}}+|\partial_{y}Q_{3}|^{\frac{1}{2}}\right),
    \\
     |(\Lambda_{2}Q_{2})\phi_{3}|+|(\Lambda_{3}Q_{3})\phi_{3}|&\lesssim
     t^{-\frac{1}{10}}
    \left(|\Lambda_{2}Q_{2}|^{\frac{1}{2}}+|\Lambda_{3}Q_{3}|^{\frac{1}{2}}\right).
    \end{aligned}
\end{equation}
Using the above estimates, the orthogonality condition~\eqref{equ:ortho}
and Lemma~\ref{le:para1}, 
\begin{equation*}
\begin{aligned}
   & \sum_{k=1}^{3}\Big|\nu_{1}\int_{\RR}\left({\vec{\rm{Mod}}}_{k}\cdot\vec{M}_{k}Q\right)(\varepsilon\phi_{3})\dd y\Big|\\
   &\lesssim \frac{C_{0}}{t^{\frac{57}{20}}}\mathcal{N}_{A}(\varepsilon)+\frac{C_{0}}{t^{\frac{17}{4}}}\sum_{k=1}^{3}\Big(1+\big\|X_{k}\sqrt{\phi_{3}}\big\|_{L^{2}}\Big)\mathcal{N}_{A}(\varepsilon)\lesssim \frac{C_{0}^{2}}{t^\frac{23}{5}}.
    \end{aligned}
\end{equation*}
 Based on Lemma~\ref{le:Phiweight} and the Cauchy-Schwarz inequality, we get 
\begin{equation*}
\begin{aligned}
    &\sum_{k=1}^{3}\Big|\nu_{1}\int_{\RR}\Phi_{k}(S)\left(\varepsilon\phi_{3}\right)\dd y\Big|\\
    &\lesssim 
    \sum_{k=1}^{3}t^{-1}\mathcal{N}_{A}(\varepsilon)\left(\int_{\RR}|\Phi_{k}(S)|^{2}\phi_{1}\dd y\right)^{\frac{1}{2}}\lesssim \frac{C_{0}A^{\frac{1}{2}}}{t^{\frac{11}{2}}}.
    \end{aligned}
\end{equation*}
Gathering the above estimates together and then using Definition~\ref{def:Sfunction}, we obtain 
\begin{equation}\label{est:J432}
    \Big|\nu_{1}\int_{\RR}\Phi(S)\left(\varepsilon\phi_{3}\right)\dd y\Big|\lesssim 
    \frac{C_{0}}{t^{\frac{21}{4}}}+
   \frac{C_{0}^{2}}{t^\frac{23}{5}}+\frac{C_{0}A^{\frac{1}{2}}}{t^{\frac{11}{2}}}.
\end{equation}
Third, using again~\eqref{est:pyphi3},~\eqref{est:localQ2Q3} and integration by parts,
\begin{equation}\label{est:J433}
\begin{aligned}
    &10\nu_{1}\int_{\RR}\left(U^{4}\varepsilon\right)
    \left(\partial_{y}(\varepsilon\phi_{3})\right)\dd y\\
    &=-20\nu_{1}\int_{\RR}\left(\partial_{y}U\right)\left(U^{3}\varepsilon^{2}\right)\phi_{3}\dd y+O\left(\frac{C_{0}^{2}}{t^{\frac{9}{2}}\log t}\right).
    \end{aligned}
\end{equation}
Last, from an elementary computation and the definition of $S$ in~\eqref{equ:defS},
\begin{equation*}
\begin{aligned}
    \left|(S+\varepsilon)^{5}-S^{5}-5U^{4}\varepsilon\right|&\lesssim
    |U|^{3}\left(|V\phi|+|W|\right)|\varepsilon|+|\varepsilon|^{5}\\
    &+\left(|V\phi|^{4}+|W|^{4}\right)|\varepsilon|+ |S|^{3}\varepsilon^{2}.
    \end{aligned}
\end{equation*}
Based on~\eqref{est:phi345},~\eqref{est:pyphi3}, Lemma~\ref{le:L2e} and the above estimate, we find 
\begin{equation*}
\begin{aligned}
  & \Big| \nu_{1}\int_{\RR}\left((S+\varepsilon)^{5}-S^{5}-5U^{4}\varepsilon\right)
    \left(\partial_{y}(\varepsilon\phi_{3})\right)\dd y\Big|\\
    &\lesssim t^{-1}\left(\|V\phi\|_{L^{\infty}}+\|W\|_{L^{\infty}}+\|\varepsilon\|_{L^{\infty}}\right)\mathcal{N}_{A}^{2}(\varepsilon)\lesssim \frac{C_{0}^{2}}{t^{\frac{23}{4}}}.
    \end{aligned}
\end{equation*}
Combining the above estimate with~\eqref{est:J431} and \eqref{est:J432}--\eqref{est:J433}, we obtain 
\begin{equation*}
\begin{aligned}
     2\nu_{1} \int_{\RR}(\partial_{t}\varepsilon)\varepsilon\phi_{3}\dd y
&=-20\nu_{1}\int_{\RR}\left(\partial_{y}U\right)\left(U^{3}\varepsilon^{2}\right)\phi_{3}\dd y\\
&+O\left(\frac{C_{0}^{2}}{t^{\frac{9}{2}}\log t}+\frac{C_{0}^{2}}{t^{\frac{23}{5}}}+\frac{C_{0}A^{\frac{1}{2}}}{t^{\frac{11}{2}}}\right).
     \end{aligned}
\end{equation*}
It follows from Lemma~\ref{le:boundinte} and \eqref{est:defchi3}-\eqref{equ:defphik} that 
\begin{equation*}
\begin{aligned}
      2\nu_{1} \int_{\RR}(\partial_{t}\varepsilon)\varepsilon\phi_{3}\dd y
      &=-20\nu_{1}\int_{\RR}\left(\partial_{y}Q_{1}\right)\left(Q_{1}^{3}\varepsilon^{2}\right)\dd y\\
      &+O\left(\frac{C_{0}^{2}}{t^{\frac{9}{2}}\log t}+\frac{C_{0}^{2}}{t^{\frac{23}{5}}}+\frac{C_{0}A^{\frac{1}{2}}}{t^{\frac{11}{2}}}\right).
      \end{aligned}
\end{equation*}
We see that~\eqref{est:J4} follows from~\eqref{est:J41}--\eqref{est:J42} and the above estimate. 

\smallskip
\emph{Estimate on $\mathcal{J}_{5}-\mathcal{J}_{6}$.} We claim that 
\begin{equation}\label{est:J5J6}
\begin{aligned}
    \mathcal{J}_{5}+\mathcal{J}_{6}&=-20\sum_{k=2,3}\nu_{k}\int_{\RR}\left(\partial_{y}Q_{k}\right)
            \left(Q_{k}^{3}\varepsilon^{2}\right)\dd y\\
            &+O\left(\frac{C_{0}^{2}}{t^{\frac{9}{2}}\log t}+\frac{C_{0}^{3}}{t^{\frac{23}{5}}}+\frac{C_{0}A^{\frac{1}{2}}}{t^{\frac{11}{2}}}\right).
            \end{aligned}
\end{equation}
Indeed, using a similar argument as in the proof of~\eqref{est:J4}, we find 
\begin{equation*}
\begin{aligned}
\mathcal{J}_{5}+\mathcal{J}_{6}&=-20\sum_{k=2,3}\nu_{k}\int_{\RR}\left(\partial_{y}U\right)\left(U^{3}\varepsilon^{2}\right)\phi_{k+2}\dd y\\
&+O\left(\frac{C_{0}^{2}}{t^{\frac{9}{2}}\log t}+\frac{C_{0}^{3}}{t^{\frac{23}{5}}}+\frac{C_{0}A^{\frac{1}{2}}}{t^{\frac{11}{2}}}\right).
    \end{aligned}
\end{equation*}
Hence, we see that~\eqref{est:J5J6} follows from Lemma~\ref{le:boundinte} and~\eqref{est:defchi3}--\eqref{equ:defphik}.

\smallskip
Combining~\eqref{est:J4} with~\eqref{est:J5J6}, we complete the proof of the estimate~\eqref{est:dtP}.

\smallskip
\textbf{Step 3.} Conclusion. From~\eqref{est:dtE} and~\eqref{est:dtP}, we deduce that
\begin{equation*}
\begin{aligned}
    \frac{\dd \mathcal{K}}{\dd t}
    &\ge  20\sum_{k=1}^{3}\left(\dot{x}_{k}-\nu_{k}\right)\int_{\RR}\left(\partial_{y}Q_{k}\right)
            \left(Q_{k}^{3}\varepsilon^{2}\right)\dd y\\
           & -\frac{1}{4}\int_{\RR}\varepsilon^{2}\partial_{y}\phi_{1}\dd y
        +O\left(\frac{C_{0}A^{\frac{1}{2}}}{t^{\frac{9}{2}}}+\frac{C_{0}^{2}}{t^{\frac{9}{2}}\log t}+
         \frac{C_{0}^{3}A^{\frac{1}{2}}}{t^{\frac{23}{5}}}\right).
         \end{aligned}
\end{equation*}
Based on the above estimate and Lemma~\ref{le:para1}, we obtain 
\begin{equation*}
\begin{aligned}
    \frac{\dd \mathcal{K}}{\dd t}&\ge -\frac{1}{4}\int_{\RR}\varepsilon^{2}\partial_{y}\phi_{1}\dd y+O\left(\frac{C_{0}^{3}}{t^{\frac{21}{4}}}\right)\\
    & +O\left(\frac{C_{0}A^{\frac{1}{2}}}{t^{\frac{9}{2}}}+\frac{C_{0}^{2}}{t^{\frac{9}{2}}\log t}+
         \frac{C_{0}^{3}A^{\frac{1}{2}}}{t^{\frac{23}{5}}}\right),
    \end{aligned}
\end{equation*}
which completes the proof of the estimate~\eqref{est:dtK} via the sign condition $\partial_{y}\phi_{1}<0$.
\end{proof}

\subsection{End of the proof for Proposition~\ref{prop:uni}}\label{SS:EndPropuni}
In this subsection, we complete the proof of Proposition~\ref{prop:uni} via a bootstrap argument. We start with the following technical lemma related to some standard facts from Linear Algebra.

\begin{lemma}\label{le:linearalgebra}
    Let the matrix $M$ be defined by 
    \begin{equation*}
        M=\begin{pmatrix}
            0 & I_{3} & 0 \\
            0 & I_{3} &2 I_{3}\\
            N & 0 & 2I_{3}
        \end{pmatrix}\in \RR^{9\times 9}.
    \end{equation*}
    Here, we set 
    \begin{equation*}
        I_{3}=
        \begin{pmatrix}
            1 & 0 & 0\\
            0 & 1 & 0\\
            0 & 0 & 1
        \end{pmatrix}\in \RR^{3\times 3}
        \quad \mbox{and}\quad 
        N=
        \begin{pmatrix}
            -15 & 15 & 0\\
            -15 & 18 & -3\\
            0 & -9 &9
        \end{pmatrix}\in \RR^{3\times 3}.
    \end{equation*}
    Then, the matrix $M$ is diagonalizable and there exists $P_{1}\in {\mathrm{GL}}(9;\mathbb{C})$ such that 
    \begin{equation*}
        P^{-1}_{1}MP_{1}={\operatorname{diag}}\left(0,1,2,-1,2+\sqrt{2}i,2-\sqrt{2}i,\lambda_{1},\lambda_{2},\lambda_{3}\right).
    \end{equation*}
    Here, we denote by $(\lambda_{1},\lambda_{2},\lambda_{3})$ the roots of the following equation
    \begin{equation*}
        \lambda^{3}-3\lambda^{2}+2\lambda-30=0.
    \end{equation*}
\end{lemma}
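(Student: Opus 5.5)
The plan is to reduce the $9\times 9$ eigenvalue problem for $M$ to the $3\times 3$ spectral problem for $N$, using the block structure of $M$. The conclusion then follows from the fact that $M$ has nine pairwise distinct eigenvalues.

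\textbf{Step 1 (reduction to $N$).} Write an eigenvector as $(u,v,w)\in(\mathbb{C}^{3})^{3}$ with $M(u,v,w)^{T}=\lambda(u,v,w)^{T}$. The three block rows give $v=\lambda u$, then $v+2w=\lambda v$, and finally $Nu+2w=\lambda w$. The second relation gives $2w=\lambda(\lambda-1)u$, and substituting into the third yields
\begin{equation*}
Nu=\tfrac{1}{2}\lambda(\lambda-1)(\lambda-2)\,u .
\end{equation*}
If $u=0$, then $v=w=0$, so $u\neq 0$ for a genuine eigenvector. Hence $\lambda$ is an eigenvalue of $M$ exactly when $\tfrac12\lambda(\lambda-1)(\lambda-2)$ is an eigenvalue $\mu$ of $N$. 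Conversely, each eigenvector $u$ of $N$ with eigenvalue $\mu$ and each root $\lambda$ of $\lambda(\lambda-1)(\lambda-2)=2\mu$ produce an eigenvector $(u,\lambda u,\tfrac12\lambda(\lambda-1)u)$ of $M$.

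\textbf{Step 2 (spectrum of $N$).} The rows of $N$ sum to zero, so $\mu=0$ is an eigenvalue with eigenvector $(1,1,1)$. We have $\operatorname{tr}N=12$, and the sum of the principal $2\times 2$ minors is $-45-135+135=-45$. Therefore the characteristic polynomial of $N$ is $\mu^{3}-12\mu^{2}-45\mu=\mu(\mu-15)(\mu+3)$, so $N$ has the simple eigenvalues $0$, $15$ and $-3$.

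\textbf{Step 3 (solving the cubics).} For $\mu=0$ the equation $\lambda(\lambda-1)(\lambda-2)=0$ gives $\lambda\in\{0,1,2\}$. For $\mu=-3$ the equation is $\lambda^{3}-3\lambda^{2}+2\lambda+6=0$. Since $\lambda=-1$ is a root, it factors as $(\lambda+1)(\lambda^{2}-4\lambda+6)$, which gives $\lambda\in\{-1,\,2\pm\sqrt2 i\}$. For $\mu=15$ the equation is exactly $\lambda^{3}-3\lambda^{2}+2\lambda-30=0$, whose roots are $\lambda_{1},\lambda_{2},\lambda_{3}$.

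\textbf{Step 4 (distinctness and diagonalizability).} The only point needing care is that the nine values are pairwise distinct. Roots coming from different $\mu$ cannot coincide, because $\lambda$ determines $\mu=\tfrac12\lambda(\lambda-1)(\lambda-2)$. Within each cubic the roots must be simple.
\begin{itemize}
\item For $\mu=0$ and $\mu=-3$ the roots are explicit and visibly distinct.
\item For $p(\lambda)=\lambda^{3}-3\lambda^{2}+2\lambda-30$, the critical points are $\lambda=1\pm 1/\sqrt3$. There $|\lambda(\lambda-1)(\lambda-2)|=2/(3\sqrt3)<30$, so $p$ takes negative values at both critical points. Hence $p$ has one simple real root and a pair of complex conjugate roots, which are also simple.
\end{itemize}
Thus $M$ has nine distinct eigenvalues and is therefore diagonalizable over $\mathbb{C}$. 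Taking $P_{1}$ to be the matrix whose columns are the eigenvectors from Step 1 gives the stated diagonal form.
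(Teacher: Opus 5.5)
Your proof is correct and follows essentially the paper's route: use the block structure to reduce everything to the spectrum $\{0,15,-3\}$ of $N$, then solve the three cubics $\lambda(\lambda-1)(\lambda-2)=2\mu$. The paper does the reduction with the Schur-complement identity $\det(\lambda I_{9}-M)=\det\left(\lambda(\lambda-1)(\lambda-2)I_{3}-2N\right)$, whereas you use the eigenvector equations; your explicit check that the nine eigenvalues are pairwise distinct, which is what gives diagonalizability, fills in a step the paper leaves implicit.
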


\begin{proof}
    {\textbf{Step 1.}} Eigenvalues of $N$. For any $\lambda \in \mathbb{C}$, we compute 
    \begin{equation*}
        \det \left(\lambda I_{3}-N\right)= \det \begin{pmatrix}
            \lambda+15 & -15 & 0\\
            15 & \lambda-18 & 3\\
            0 & 9 &\lambda -9
        \end{pmatrix}=\lambda (\lambda-15)(\lambda+3),
    \end{equation*}
    which directly implies that the eigenvalues of $N$ are given by $(-3,0,15)$.

   \smallskip
   {\textbf{Step 2.}} Eigenvalues of $M$. For any $\lambda \in \mathbb{C}$, from Schur complement, we compute 
   \begin{equation*}
   \begin{aligned}
    \det \left(\lambda I_{9}-M\right)
    &=\det 
    \begin{pmatrix}
        \lambda I_{3} & -I_{3} & 0\\
        0 &(\lambda-1) I_{3} & -2I_{3}\\
        -N & 0 & (\lambda-2)I_{3}
    \end{pmatrix}\\
    &=\det \left(\lambda (\lambda-1)(\lambda-2)I_{3}-2N\right).
    \end{aligned}
   \end{equation*}
   Therefore, by the result in Step 1, we obtain 
   \begin{equation*}
          \det \left(\lambda I_{9}-M\right)=\prod_{\mu\in \left\{-3,0,15\right\}}\left(\lambda (\lambda-1)(\lambda-2)-2\mu\right),
   \end{equation*}
   which directly completes the proof of Lemma~\ref{le:linearalgebra}.
\end{proof}
\begin{remark}\label{re:linearalgebra}
    Using a standard numerical computation, we find 
    \begin{equation*}
        (\lambda_{1},\lambda_{2},\lambda_{3})
        \approx \left(-0.606+2.5i,-0.606-2.5i,4.213\right).
    \end{equation*}
    Therefore, we could rearrange the eigenvalues of $M$ as $(\xi_{1},\dots,\xi_{9})\in \mathbb{C}^{9}$ such that 
    \begin{equation}\label{est:xirank}
        \mathrm{Re}\xi_{1}<\mathrm{Re}\xi_{2}<\mathrm{Re}\xi_{3}<-\frac{1}{15}<\mathrm{Re}\xi_{4}<\cdots<\mathrm{Re}\xi_{9}.
    \end{equation}
    Moreover, we choose the matrix $P_{2}\in \mathrm{GL}(9;\mathbb{C})$ such that 
    \begin{equation*}
        P^{-1}_{2}MP_{2}={\rm{diag}}\left(\xi_{1},\xi_{2},\dots,\xi_{9}\right).
    \end{equation*}
\end{remark}

We are in a position to complete the proof of Proposition~\ref{prop:uni}.
\begin{proof}[Proof of Proposition~\ref{prop:uni}]
\textbf{Step 1.} Closing the estimate for $\varepsilon$. Integrating~\eqref{est:dtK} on $[t,T_{n}]$ for any $t\in [T^{*},T_{n}]$ and then using $\varepsilon(T_{n})=0$, we get 
\begin{equation*}
    \mathcal{K}(t)\lesssim 
            \frac{C_{0}A^{\frac{1}{2}}}{t^{\frac{7}{2}}}+\frac{C_{0}^{2}}{t^{\frac{7}{2}}\log t}+
         \frac{C_{0}^{3}A^{\frac{1}{2}}}{t^{\frac{18}{5}}}.
\end{equation*}
Therefore, from~\eqref{est:coerK}, for $T_{0}\gg 1$ large enough (depending on $C_{0}$ and $A$), 
\begin{equation}\label{est:improveNA}
    \mathcal{N}^{2}_{A}(\varepsilon)\lesssim  \frac{C_{0}A^{\frac{1}{2}}}{t^{\frac{7}{2}}}+\frac{C_{0}^{2}}{t^{\frac{7}{2}}\log t}+
         \frac{C_{0}^{3}A^{\frac{1}{2}}}{t^{\frac{18}{5}}}\lesssim
         \frac{C_{0}A^{\frac{1}{2}}}{t^{\frac{7}{2}}}+\frac{C_{0}^{2}}{t^{\frac{7}{2}}\log t}.
\end{equation}
This strictly improves the estimate on $\varepsilon$ in~\eqref{est:Boot} for $C_{0}\gg 1$ large enough.

\smallskip
\textbf{Step 2.} Linearization of the ODE system for $\Gamma$. 
For any $k\in [\![1,3]\!]$, we denote 
\begin{equation*}
\left\{
\begin{aligned}
\widehat{x}_{k}&=x_{k}+(18-3k)\log t-\beta_{k},\\
\widehat{\nu}_{k}&=\nu_{k}+\mathcal{F}_{k}+\frac{18-3k}{t} \ \ \mbox{and}\ \ 
\widehat{b}_{k}=b_{k}-\frac{18-3k}{2t^{2}}.
\end{aligned}
\right.
\end{equation*}
In addition, we also denote 
\begin{equation}\label{est:Pipoint}
\vec{\Omega}=\big(\widehat{x}_{1},\widehat{x}_{2},\widehat{x}_{3},
t\widehat{\nu}_{1},t\widehat{\nu}_{2},t\widehat{\nu}_{3},
t^{2}\widehat{b}_{1},t^{2}\widehat{b}_{2},t^{2}\widehat{b}_{3}
\big)^{T}\ \ \mbox{and}\ \ \vec{\Pi}=P_{2}^{-1}\vec{\Omega}.
\end{equation}
On the one hand, from the bootstrap setting~\eqref{est:Boot} and the definition of $\vec{\Pi}$, 
\begin{equation}\label{est:pointPi}
    |\vec{\Pi}(t)|\lesssim |\vec{\Omega}(t)|\lesssim \sum_{k=1}^{3}
    \left(|\widehat{x}_{k}|+t|\widehat{v}_{k}|+t^{2}|\widehat{b}_{k}|\right)
    \lesssim t^{-\frac{1}{16}}.
\end{equation}
Here, we use the fact that 
\begin{equation*}
\sum_{k=1}^{3}|\mathcal{F}_{k}|\lesssim \sum_{k=1}^{3}\int_{\RR}|\varepsilon(t)|\rho_{k}\phi_{2}\dd y\lesssim t^{\frac{1}{2}}\mathcal{N}_{A}(\varepsilon)\lesssim \frac{C_{0}}{t^{\frac{5}{4}}}.
\end{equation*}
On the other hand, using again the bootstrap setting~\eqref{est:Boot} and~\eqref{equ:defbeta}, we find 
\begin{equation*}
\begin{aligned}
e^{-r_{1}}&=\frac{15}{\alpha t^{3}}e^{-(\widehat{x}_{2}-\widehat{x}_{1})}=\frac{15}{\alpha t^{3}}
\left(1+\widehat{x}_{1}-\widehat{x}_{2}+O\left(\frac{1}{t^{\frac{1}{8}}}\right)\right)
,\\
    e^{-r_{2}}&=\frac{3}{\alpha t^{3}}e^{-(\widehat{x}_{3}-\widehat{x}_{2})}
    =\frac{3}{\alpha t^{3}}\left(1+\widehat{x}_{2}-\widehat{x}_{3}+O\left(\frac{1}{t^{\frac{1}{8}}}\right)\right).
    \end{aligned}
\end{equation*}
It follows from (ii) of Lemma~\ref{le:para1} and Lemma~\ref{le:linearalgebra} that 
\begin{equation*}
    \frac{\dd}{\dd t}
    \begin{pmatrix}
        \widehat{b}_{1}\\
        \widehat{b}_{2}\\
        \widehat{b}_{3}
    \end{pmatrix}=\frac{N}{t^{3}}\begin{pmatrix}
        \widehat{x}_{1}\\
        \widehat{x}_{2}\\
        \widehat{x}_{3}
    \end{pmatrix}
    +O\left(\frac{1}{t^{\frac{25}{8}}}+\frac{C_{0}}{t^{\frac{13}{4}}}+\frac{C_{0}^{2}}{t^{\frac{7}{2}}}\right).
\end{equation*}
Based on the above identity, the bootstrap setting~\eqref{est:Boot}, Lemma~\ref{le:para1} and Lemma~\ref{le:refinednu}, we deduce that 
\begin{equation*}
    \frac{\dd \vec{\Omega}}{\dd t}=\frac{M}{t}\vec{\Omega}
    +O\left(\frac{1}{t^{\frac{9}{8}}}+\frac{C_{0}}{t^{\frac{5}{4}}}+\frac{C_{0}^{2}}{t^{\frac{3}{2}}}\right).
\end{equation*}
Here, we use again the fact that 
\begin{equation*}
\sum_{k=1}^{3}|\mathcal{F}_{k}|\lesssim \sum_{k=1}^{3}\int_{\RR}|\varepsilon(t)|\rho_{k}\phi_{2}\dd y\lesssim t^{\frac{1}{2}}\mathcal{N}_{A}(\varepsilon)\lesssim \frac{C_{0}}{t^{\frac{5}{4}}}.
\end{equation*}
Combining the above identity with~\eqref{est:Pipoint} and Remark~\ref{re:linearalgebra}, we obtain 
\begin{equation}\label{equ:odeeigen}
    \frac{\dd \vec{\Pi}}{\dd t}=\frac{1}{t}
    \begin{pmatrix}
        \xi_{1} & 0 & \cdots & 0\\
         0& \xi_{2} & \cdots & 0\\
         \vdots & \vdots &\vdots & \vdots\\
      0  & 0  & \cdots & \xi_{9}
    \end{pmatrix}\vec{\Pi}
    +O\left(\frac{1}{t^{\frac{9}{8}}}+\frac{C_{0}}{t^{\frac{5}{4}}}+\frac{C_{0}^{2}}{t^{\frac{3}{2}}}\right).
\end{equation}

\textbf{Step 3.} Closing the estimate for $\Gamma$. We denote 
\begin{equation*}
\vec{\Pi}=\left(\Pi_{1},\Pi_{2},\Pi_{3},\Pi_{4},\Pi_{5},\Pi_{6},\Pi_{7},\Pi_{8},\Pi_{9}\right)^{T}\in \mathbb{C}^{9}.
\end{equation*}
We choose the final data $\Gamma^{in}$ such that $\vec{\Pi}(T_{n})\in \mathbb{C}^{9}$ takes the following form 
\begin{equation}\label{equ:finalPi}
    \vec{\Pi}(T_{n})=\left(T_{n}^{-\frac{1}{15}}\vec{v},\vec{0}\right)^{T},\ \ \mbox{with}\ \vec{v}=(v_{1},v_{2},v_{3})\in \bar{B}_{\RR^{3}}(1).
\end{equation}

For any $m\in [\![4,9]\!]$, from~\eqref{equ:odeeigen}, we find 
\begin{equation*}
    \frac{\dd \Pi_{m}}{\dd t}=\frac{\xi_{m}}{t}\Pi_{m}+ O\left(\frac{1}{t^{\frac{9}{8}}}+\frac{C_{0}}{t^{\frac{5}{4}}}+\frac{C_{0}^{2}}{t^{\frac{3}{2}}}\right),
\end{equation*}
which implies that 
\begin{equation*}
    \frac{\dd }{\dd t}\left(t^{-\xi_{m}}\Pi_{m}\right)=
     O\left(\frac{1}{t^{\frac{9}{8}+\mathrm{Re}\xi_{m}}}+\frac{C_{0}}{t^{\frac{5}{4}+\mathrm{Re}\xi_{m}}}+\frac{C_{0}^{2}}{t^{\frac{3}{2}+\mathrm{Re}\xi_{m}}}\right).
\end{equation*}
Integrating the above identity over $[t,T_{n}]$ for any $t\in [T^{*},T_{n}]$ and then using~\eqref{est:xirank} and~\eqref{equ:finalPi}, we deduce that 
\begin{equation}\label{est:Pi49}
\begin{aligned}
    \left|\Pi_{m}(t)\right|
    &\lesssim \int_{t}^{T_{n}}\left(\frac{|t^{\xi_{m}}|}{s^{\frac{9}{8}+\mathrm{Re}\xi_{m}}}+\frac{C_{0}|t^{\xi_{m}}|}{s^{\frac{5}{4}+\mathrm{Re}\xi_{m}}}\right)\dd s\\
    &+\int_{t}^{T_{n}}\frac{C_{0}^{2}|t^{\xi_{m}}|}{s^{\frac{3}{2}+\mathrm{Re}\xi_{m}}}\dd s
    \lesssim \frac{1}{t^{\frac{1}{8}}}+\frac{C_{0}}{t^{\frac{1}{4}}}+\frac{C_{0}^{2}}{t^{\frac{1}{2}}}.
\end{aligned}
\end{equation}
On the other hand, for the control of the unstable directions, we introduce 
\begin{equation*}
    \Upsilon(t)=t^{\frac{2}{15}}\sum_{m=1}^{3}|\Pi_{m}(t)|^{2}\quad \mbox{and}\quad 
    \varrho=\min_{m\in [\![1,3]\!]}\left(-\mathrm{Re}\xi_{m}-\frac{1}{15}\right)>0.
\end{equation*}
Moreover, we introduce the following bootstrap setting:
\begin{equation}\label{def:T2}
    T^{**}=T^{**}(\vec{v})=\inf \left\{t\in [T_{0},T_{n}]:\Upsilon(s)\le 1\ \mbox{on}\ [t,T_{n}]\right\}.
\end{equation}
It follows from \eqref{est:improveNA}-\eqref{est:Pipoint} and~\eqref{est:Pi49} that
\begin{equation}\label{est:T1T2}
    T_{0}\le T^{*}(\Gamma^{in})\le T^{**}(\vec{v})\le T_{n}<\infty,\ \ \mbox{for}\ T_{0}\gg 1.
\end{equation}
Then, using~\eqref{est:pointPi} and~\eqref{equ:odeeigen}, we have
\begin{equation*}
\begin{aligned}
    \frac{\dd \Upsilon}{\dd t}
    &=t^{\frac{2}{15}}\sum_{m=1}^{3}\left(\overline{\Pi}_{m}\frac{\dd \Pi_{m}}{\dd t}+{\Pi}_{m}\frac{\dd \overline{\Pi}_{m}}{\dd t}\right)
    +\frac{2}{15}t^{-\frac{13}{15}}\sum_{m=1}^{3}|\Pi_{m}|^{2}
    \\
    &=2t^{-\frac{13}{15}}\sum_{m=1}^{3}\left({\mathrm{Re}}\xi_{m}+\frac{1}{15}\right)|\Pi_{m}|^{2}+O\left(\frac{1}{t^{\frac{253}{240}}}+\frac{C_{0}^{2}}{t^{\frac{283}{240}}}\right),
    \end{aligned}
\end{equation*}
which implies that 
\begin{equation}\label{est:dtUpsilon}
    \frac{\dd \Upsilon}{\dd t}\le -\frac{2\varrho}{t}\Upsilon+O\left(\frac{1}{t^{\frac{253}{240}}}+\frac{C_{0}^{2}}{t^{\frac{283}{240}}}\right).
\end{equation}
The transversality relation~\eqref{est:dtUpsilon} is enough to justify the existence of at least one choose of $\vec{v}\in B_{\RR^{3}}(1)$ such that  $T^{*}(\Gamma^{in})=T^{**}(\vec{v})=T_{0}$.

\smallskip
For the sake of contradiction, we assume that for all $\vec{v}\in B_{\RR^{3}}(1)$, it holds $T_{0}<T^{**}(\vec{v})$. Then, a contradiction follows from the following discussion (see for instance more details in~\cite[Page 1153]{MMARMA} and~\cite[Page 730]{MartelRaphael}).

\smallskip
		\emph{Continuity of $T^{**}(\vec{v})$.}
		The above transversality relation~\eqref{est:dtUpsilon} directly implies that the following map 
		\begin{equation*}
			\vec{v}\in\bar{{B}}_{\RR^{3}}(1)\mapsto 
			T^{**}(\vec{v}),
		\end{equation*}
		is continuous and 
		\begin{equation*}
			T^{**}(\vec{v})=T_{n},\quad \mbox{for any}\ \vec{v}\in {S}_{\RR^{3}}(1).
		\end{equation*}
		
		\smallskip
		\emph{Construction of a retraction.} We define 
		\begin{equation*}
			\begin{aligned}
				\mathcal{A}:\vec{v}\in \bar{{B}}_{\RR^{3}}(1)\mapsto
                \left(\Pi_{1}(T^{**}(\vec{v})),\Pi_{2}(T^{**}(\vec{v})),\Pi_{3}(T^{**}(\vec{v}))\right)\in {S}_{\RR^{3}}(1).
			\end{aligned}
		\end{equation*}
        From what precedes, the map $\mathcal{A}:\bar{B}_{\RR^{3}}(1)\mapsto S_{\RR^{3}}(1)$ is continuous. In addition, from~\eqref{def:T2} and~\eqref{est:dtUpsilon}, the map $\mathcal{A}$ restricted to ${S}_{\RR^{3}}(1)$ is the identity. The existence of such a map is contradictory to the No Retraction Theorem for continuous maps from the ball to the sphere. Hence, the existence of $\vec{v}\in B_{\RR^{3}}(1)$ has proved and the uniform estimates for $(\varepsilon,\Gamma)$ are direct consequences of~\eqref{est:Boot},~\eqref{def:T2}-\eqref{est:T1T2} and Lemma~\ref{le:L2e}. The proof of Proposition~\ref{prop:uni} is complete.
\end{proof}

\section{Compactness argument}
In this section, we finish the construction of three-solitons in Theorem~\ref{thm:main} by passing to the limit on a sequence of solutions given by Proposition~\ref{prop:uni}.

\begin{proof}
    [End of the proof of Theorem~\ref{thm:main}]
    Applying Proposition~\ref{prop:uni} for any large enough $n\in \mathbb{N}^{+}$, we obtain a solution $u_{n}(t)$ of~\eqref{equ:gKdV} defined on the time interval $[T_{0},T_{n}]$ such that its decomposition $(\varepsilon_{n},\Gamma_{n})$ satisfies the uniform estimates~\eqref{est:unisolu} on $[T_{0},T_{n}]$. It follows directly that the sequence $\{u_{n}(T_{0})\}_{n\in \mathbb{N}^{+}}$ is bounded in $H^{1}$. Therefore, up to the extraction of a subsequence, $\{u_{n}(T_{0})\}_{n\in \mathbb{N}^{+}}$ converges weakly in $H^{1}$ to some $u_{0}\in H^{1}$ as $n\uparrow \infty$. Let $u(t)$ be the solution of~\eqref{equ:gKdV} corresponding to initial data $u_{|t=T_{0}}=u_{0}$.
    Based on the local Cauchy theory and weak $H^{1}$ continuity of the flow\footnote{See~\cite[Lemma 2.10]{Comkdv} and~\cite[Appendix D]{MMJMPA} for the statement and proof of this property.}, we see that $u(t)\in C([T_{0},\infty);H^{1})$ and $w(t,y)=u(t,y+t)$ satisfies~\eqref{est:modu} for all $t\in [T_{0},\infty)$. In addition, the decomposition $(\varepsilon(t),\Gamma(t))$ of $u(t)$ satisfies
    \begin{equation}\label{est:weaku}
        \varepsilon_{n}(t)\rightharpoonup \varepsilon(t)\ \ \mbox{weakly in}\ H^{1}\ \ \mbox{and}\ \ 
        \Gamma_{n}(t)\to \Gamma(t), \ \ \mbox{as}\ n\uparrow \infty.
    \end{equation}
    In particular, for any $t\in [T_{0},\infty)$, the solution $u(t)$ decompose as 
    \begin{equation*}
        u(t,x)=S(t,x-t;\Gamma)+\varepsilon(t,x-t)\in H^{1}.
    \end{equation*}
    Using~\eqref{est:unisolu} and~\eqref{est:weaku}, we find 
    \begin{equation}\label{est:unisoluu}
        \begin{aligned}
            |x_{1}(t)+15\log t-\beta_{1}|\le \frac{1}{t^{\frac{1}{16}}},\ \ \big|\nu_{1}(t)+\frac{15}{t}\big|
            \le \frac{1}{t^{\frac{17}{16}}},
            \\
            |x_{2}(t)+12\log t-\beta_{2}|\le \frac{1}{t^{\frac{1}{16}}}, \ \ 
            \big|\nu_{2}(t)+\frac{12}{t}\big|\le \frac{1}{t^{\frac{17}{16}}},
            \\
            |x_{3}(t)+9\log t-\beta_{3}|\le \frac{1}{t^{\frac{1}{16}}}, \ \ 
            \big|\nu_{3}(t)+\frac{9}{t}\big|\le \frac{1}{t^{\frac{17}{16}}},\\
            \big|b_{1}-\frac{15}{2t^{2}}\big|+
             \big|b_{2}-\frac{6}{t^{2}}\big|+
              \big|b_{3}-\frac{9}{2t^{2}}\big|\le \frac{1}{t^{\frac{33}{16}}}\ \ \mbox{and}\ \ 
             \|\varepsilon(t)\|_{H^{1}}\le \frac{1}{t}.
        \end{aligned}
    \end{equation}
    It follows from the definition of $S$ in~\eqref{equ:defS} that 
    \begin{equation*}
        \|S-U\|_{H^{1}}\lesssim \|V\phi\|_{H^{1}}+\|W\|_{H^{1}}\lesssim \frac{1}{t^{\frac{3}{2}}}+\frac{1}{t^{2}}\lesssim \frac{1}{t^{\frac{3}{2}}}.
    \end{equation*}
    Therefore, using~\eqref{equ:Taylornu} and~\eqref{est:unisoluu}, we obtain the following estimate of $u$:
    \begin{equation*}
        u(t,x)=\sum_{k=1}^{3}\sigma_{k}Q(x-t-x_{k}(t))
        +\varepsilon(t,x-t)
        +O_{H^{1}}\left(t^{-1}\right).
    \end{equation*}
    Combining above estimate with~\eqref{est:unisoluu}, we complete the proof of Theorem~\ref{thm:main}.
\end{proof}

\appendix
\section{Discussion on the ODE system}\label{Appen:ODE}
In this appendix, we analyze the structure of the ODE system~\eqref{equ:ODEsystem} and show that $K=3$ and $\vec{\sigma}=(1,-1,-1)$ is the only choice such that the system admits an explicit solution of the form~\eqref{equ:solutionapp}.
Following the standard approach in the study of multi-soliton dynamics for nonlinear dispersive or wave equations, we decompose
\begin{equation*}
    \partial_{t}S+\partial_{y}\left(\partial_{y}^{2}S-S+S^{5}\right)={\rm{Mod}}+{\mathrm{Error}}+{\rm{L.O.T}}.
\end{equation*}
Here, we denote by Mod the modulation part related to the approximate solution $S$ and the ODE system~\eqref{equ:ODEsystem} and by Error the leading-order of the error terms. Combining~\eqref{equ:defSintro} and~\eqref{equ:ODEsystem} with the estimate of nonlinear interaction between solitons (see Lemma~\ref{le:asymptotic}), we decompose\footnote{See for example Proposition~\ref{prop:appPhi} for the structure of Error in the three-solitons case.}
\begin{equation*}
    {\rm{Error}}=\sum_{k=1}^{K-1}e^{-r_{k}}\mathcal{Y}_{k},\ \ \mbox{on the multi-solitons region}.
\end{equation*}
Here, for any $k\in [\![1,K-1]\!]$, we set
\begin{equation}\label{equ:defYk}
\begin{aligned}
    \mathcal{Y}_{k}&=5c_{Q}\sigma_{k}(e^{-y}Q^{4})'(y-x_{k+1})+\sigma_{k}\alpha_{k,k}X_{k}\\
    &+\sigma_{k+1}\alpha_{k+1,k}X_{k+1}
    -10m_{0}\sigma_{k+1}\alpha_{k+1,k}\sum_{\ell=k+2}^{K}Y_{\ell}\\
    &+5c_{Q}\sigma_{k+1}(e^{y}Q^{4})'(y-x_{k})-10m_{0}\sigma_{k}\alpha_{k,k}\sum_{\ell=k+1}^{K}Y_{\ell}.
    \end{aligned}
\end{equation}
Our main goal is to choose signs $\vec{\sigma}$ and coefficients $\vec{\alpha}$ such that {$\rm{Error}$} does not affect the evolution of the ODE system~\eqref{equ:ODEsystem} especially for the parameter $b_{k}$ for $k\in [\![1,K]\!]$. Therefore, we need to find a choice such that the error $\mathcal{Y}_{k}$ is almost orthogonal to $Q_{\ell}$ for any $(k,\ell)\in [\![1,K]\!]\times [\![1,K]\!]$.
We split the discussion into the following two cases according to the number of solitons. 

\smallskip
{\textbf{Case 1}}. Let $K=2$. Using~\eqref{equ:defm0}, Lemma~\ref{le:Y} and Corollary~\ref{coro:X}, we find 
\begin{equation*}
   \left( \mathcal{Y}_{1},Q_{1}\right)=\left(\alpha \sigma_{2}-\sigma_{1}\alpha_{1,1}\right)m_{0}^{2}+{\rm{L.O.T}},
\end{equation*}
which implies that 
\begin{equation*}
    (\mathcal{Y}_{1},Q_{1})={\rm{L.O.T}}\Longrightarrow \alpha_{1,1}=\alpha \frac{\sigma_{2}}{\sigma_{1}}=\alpha \sigma_{2}.
\end{equation*}
Based on a similar argument, we also find 
\begin{equation*}
    \left(\mathcal{Y}_{1},Q_{2}\right)=-\left((\alpha+2\alpha_{1,1}) \sigma_{1}+\sigma_{2}\alpha_{2,1}\right)m_{0}^{2}+{\rm{L.O.T}},
\end{equation*}
which implies that 
\begin{equation*}
    \left(\mathcal{Y}_{1},Q_{2}\right)={\rm{L.O.T}}\Longrightarrow
    \alpha_{2,1}=-\frac{\sigma_{1}}{\sigma_{2}}\left(\alpha+2\alpha_{1,1}\right).
\end{equation*}
Combining the above two identities for $(\alpha_{1,1},\alpha_{2,1})$, we obtain 
\begin{equation*}
\left\{\begin{aligned}
    \left(\alpha_{1,1},\alpha_{2,1}\right)&=(\alpha,-3\alpha),\quad \quad \mbox{for}\ \left(\sigma_{1},\sigma_{2}\right)=(1,1),\\
    \left(\alpha_{1,1},\alpha_{2,1}\right)&=(-\alpha,-\alpha),\quad  \ \ \mbox{for}\ \left(\sigma_{1},\sigma_{2}\right)=(1,-1).
    \end{aligned}
    \right.
\end{equation*}
Note that, for the above two cases, the function in~\eqref{equ:solutionapp} is not a solution for the ODE system~\eqref{equ:ODEsystem}. In particular, for the case of $(\sigma_{1},\sigma_{2})=(1,-1)$, the system~\eqref{equ:ODEsystem} admits an explicit solution related to the two-bubble blow up solution of~\eqref{equ:gKdV} (see~\cite{LYPRINT} for more discussion) but not in the multi-solitons regime.

\smallskip
{\textbf{Case 2}}. Let $K\in \mathbb{N}^{+}$ with $K\ge 3$. Using again~Lemma~\ref{le:Y}, Corollary~\ref{coro:X} and the definition of $\mathcal{Y}_{k}$ in~\eqref{equ:defYk}, for any $k\in [\![1,K-2]\!]$, we find
\begin{equation*}
(\mathcal{Y}_{k},Q_{k+2})=-2\left(\sigma_{k}\alpha_{k,k}+\sigma_{k+1}\alpha_{k+1,k}\right)m_{0}^{2}+{\rm{L.O.T}},
\end{equation*}
which implies that 
\begin{equation}\label{equ:odec1}
    (\mathcal{Y}_{k},Q_{k+2})={\rm{L.O.T}}\Longrightarrow \sigma_{k}\alpha_{k,k}+\sigma_{k+1}\alpha_{k+1,k}=0.
\end{equation}
Based on an argument similar to that in Case 1, for any $k\in [\![1,K-2]\!]$, we also find 
\begin{equation*}
\left\{\begin{aligned}
    \left(\mathcal{Y}_{k},Q_{k}\right)&=\left(\alpha \sigma_{k+1}-\sigma_{k}\alpha_{k,k}\right)m_{0}^{2}+{\rm{L.O.T}},\\
    \left(\mathcal{Y}_{k},Q_{k+1}\right)&=-\left(\left(\alpha +2\alpha_{k,k}\right)\sigma_{k}+\sigma_{k+1}\alpha_{k+1,k}\right)m_{0}^{2}+{\rm{L.O.T}}.
    \end{aligned}\right.
\end{equation*}
It follows directly from~\eqref{equ:odec1} that 
\begin{equation*}
\left(\mathcal{Y}_{k},Q_{k}\right)
   = \left(\mathcal{Y}_{k},Q_{k+1}\right)={\rm{L.O.T}}
    \Longrightarrow
   \left( \alpha_{k,k},\sigma_{k+1}\right)=\left(-\alpha,-\sigma_{k}\right).
\end{equation*}
Combining the above identity with $\sigma_{1}=1$ and~\eqref{equ:odec1}, we obtain 
\begin{equation}\label{equ:ODESYS1}
    \left(\alpha_{k,k},\alpha_{k+1,k},\sigma_{k+1}\right)=
    \left(-\alpha,-\alpha,(-1)^{k}\right),\ \ \mbox{for any}\ k\in [\![1,K-2]\!].
\end{equation}
On the other hand, for the case of $k=K-1$, we compute
\begin{equation*}
\left\{\begin{aligned}
  &\left(\mathcal{Y}_{K-1},Q_{K-1}\right)=\left(\alpha\sigma_{K}-\sigma_{K-1}\alpha_{K-1,K-1}\right)m_{0}^{2}+{\rm{L.O.T}},\\
   &\left(\mathcal{Y}_{K-1},Q_{K}\right)=-\left((\alpha+2\alpha_{K-1,K-1})\sigma_{K-1}+\sigma_{K}\alpha_{K,K-1}\right)m_{0}^{2}+{\rm{L.O.T}},
    \end{aligned}\right.
\end{equation*}
which implies that 
\begin{equation*}
   \left(\mathcal{Y}_{K-1},Q_{K-1}\right)= \left(\mathcal{Y}_{K-1},Q_{K}\right)={\rm{L.O.T}}
   \Longrightarrow
    \left\{
    \begin{aligned}
    &\alpha_{K-1,K-1}=
    \alpha \frac{\sigma_{K}}{\sigma_{K-1}},\\
    &\alpha_{K,K-1}=-\frac{\sigma_{K-1}}{\sigma_{K}}(\alpha+2\alpha_{K-1.K-1}).
    \end{aligned}
    \right.
\end{equation*}
It follows directly from~\eqref{equ:ODESYS1} that 
    \begin{equation}\label{equ:ODESYS2}
\left\{\begin{aligned}
    \left(\alpha_{K-1,K-1},\alpha_{K,K-1}\right)&=(\alpha,-3\alpha),\quad\ \  \  \mbox{for}\ \sigma_{K-1}-\sigma_{K}=0,\\
    \left(\alpha_{K-1,K-1},\alpha_{K,K-1}\right)&=(-\alpha,-\alpha),\quad  \ \ \mbox{for}\ \sigma_{K-1}+\sigma_{K}=0.
    \end{aligned}
    \right.
\end{equation}

{\textbf{Subcase 2.1.}} Assume that $\sigma_{K-1}-\sigma_{K}=0$. Using~\eqref{equ:ODESYS1} and~\eqref{equ:ODESYS2}, we find
\begin{equation*}
\left\{\begin{aligned}
    &\frac{\dd b_{k}}{\dd t}=-\alpha e^{-r_{k-1}}-\alpha e^{-r_{k}}, \  \mbox{for}\ k\in [\![1,K-2]\!],\\
   & \frac{\dd b_{K-1}}{\dd t}=-\alpha e^{-r_{K-2}}+\alpha e^{-r_{K-1}} \ \ \mbox{and}\ \ \frac{\dd b_{K}}{\dd t}=-3\alpha e^{-r_{K-1}}.
    \end{aligned}
    \right.
\end{equation*}
For $K\ge 4$, we directly have 
\begin{equation*}
    \frac{\dd b_{2}}{\dd t}-\frac{\dd b_{1}}{\dd t}=-\alpha e^{-r_{2}}<0,
\end{equation*}
which implies that the ODE system~\eqref{equ:ODEsystem} does not have a straightforward solution of the form~\eqref{equ:solutionapp}. Surprisingly, for $K=3$, the system admits an explicit solution 
\begin{equation*}
    (x_{k},\nu_{k},b_{k})=\left((-18+3k)\log t+\beta_{k},\frac{-18+3k}{t},\frac{18-3k}{2t^{2}}\right),\quad \mbox{for}\ k\in [\![1,3]\!],
\end{equation*}
which matches the asymptotic behavior of solution in the statement of Theorem~\ref{thm:main}.

\smallskip
\textbf{Subcase 2.2.} Assume that $\sigma_{K-1}+\sigma_{K}=0$. Using again~\eqref{equ:ODESYS1} and~\eqref{equ:ODESYS2}, we find
\begin{equation*}
    \frac{\dd b_{k}}{\dd t}=-\alpha e^{-r_{k-1}}-\alpha e^{-r_{k}}, \ \   \mbox{for}\ k\in [\![1,K]\!],
\end{equation*}
which directly implies that 
\begin{equation*}
    \frac{\dd b_{2}}{\dd t}-\frac{\dd b_{1}}{\dd t}=-\alpha e^{-r_{2}}<0.
\end{equation*}
Based on the above inequality, we know that the ODE system~\eqref{equ:ODEsystem} does not have a straightforward solution of the form~\eqref{equ:solutionapp}.  Therefore, we do not expect the existence of multi-soliton solutions in the strong interaction regime for this case.

\end{document}